\definecolor{darkgreen}{rgb}{0,0.45,0}
\newtheorem{theo}{Theorem}[section]
\newtheorem{lem}[theo]{Lemma}
\newtheorem{theorem}[theo]{Theorem}
\newtheorem{lemma}[theo]{Lemma}
\newtheorem{prop}[theo]{Proposition}
\newtheorem{proposition}[theo]{Proposition}
\newtheorem{cor}[theo]{Corollary}
\theoremstyle{definition}
\newtheorem{defi}[theo]{Definition}
\newtheorem*{ex*}{Example}
\newtheorem{ex}[theo]{Example}
\newcommand{\pr}{\text{pr}}
\renewcommand{\l}{\left}
\renewcommand{\r}{\right}
\renewcommand{\v}{\vert}
\newcommand{\la}{\langle}
\newcommand{\ra}{\rangle}
\newcommand{\bmat}{\begin{pmatrix}}
\newcommand{\emat}{\end{pmatrix}}
\newcommand{\bdet}{\begin{vmatrix}}
\newcommand{\edet}{\end{vmatrix}}
\newcommand{\barray}{\l\{\begin{array}}
\newcommand{\earray}{\end{array}\r.}
\newcommand{\st}{\,\v\,}
\renewcommand{\t}[1]{\text{#1}}
\def\hot{\widehat{\otimes}}
\renewcommand{\o}{\otimes}
\newcommand{\Gal}{{\sf Gal}}
\newcommand{\Perm}{{\sf Perm}}
\newcommand{\selabel}[1]{\label{se:#1}}
\newcommand{\prref}[1]{Proposition~\ref{pr:#1}}
\def\ul{\underline}
\newcommand{\Hom}{{\sf Hom}}
\newcommand{\End}{{\sf End}}
\newcommand{\Mod}{{\sf Mod}}
\newcommand{\can}{{\ul{\sf can}}}
\newcommand{\Vect}{{\sf Vect}}
\newcommand{\ProVect}{{\sf ProVect}}
\newcommand{\Alg}{{\sf Alg}}
\newcommand{\res}{{\textrm{res}}}
\def\ot{\otimes}
\def\id{\textrm{{\small 1}\normalsize\!\!1}}
\newcommand{\ol}[1]{\overline{#1}}
\def\NN{{\mathbb N}}
\def\QQ{{\mathbb Q}}
\newcommand{\plim}[1]{\underset{#1}{\underleftarrow{\lim} \;}}
\newcommand{\ilim}[1]{\underset{#1}{\underrightarrow{\lim} \;}}
\newcommand{\exclude}[1]{}
\begin{document}

\title{Correspondence theorems for infinite Hopf-Galois extensions}
\author{Hoan-Phung Bui}
\email{\href{mailto:hbui@ulb.ac.be}{\texttt{hoanphung.bui@gmail.com}}}
\author{Joost Vercruysse}
\address{Département de Mathématique, Universit\'e Libre de Bruxelles (ULB), B-1050 Bruxelles, Belgium}
\email{\href{mailto:joost.vercruysse@ulb.be}{\texttt{joost.vercruysse@ulb.be}}}
\author{Gabor Wiese}
\address{Département Mathématiques, Université du Luxembourg, L-4364 Esch-sur-Alzette, Luxembourg}
\email{\href{mailto:gabor.wiese@uni.lu}{\texttt{gabor.wiese@uni.lu}}}

\subjclass[2010]{12F10, 20E18, 16S40, 16T05}

\date{\footnotesize \today}

\begin{abstract}
This paper extends Hopf-Galois theory to infinite field extensions and provides a natural definition of subextensions.
For separable (possibly infinite) Hopf-Galois extensions, it provides a Galois correspondence.
This correspondence also is a refinement of what was known in the case of finite separable Hopf-Galois extensions.
\end{abstract}

\maketitle

\section{Introduction}

Hopf-Galois theory arose from the quest to generalise the classical Galois theory of field extensions to commutative rings \cite{CHR}, \cite{ChaseSweedler69} and even non-commutative ring extensions \cite{KreimerTakeuchi81}, but also leads to a new approach to study classical (separable) field extensions \cite{GreitherPareigis87}, see \cite{HGbook} for a recent overview on the topic.
From the very beginning, there has been a lot of interest in extending the classical Galois correspondence between intermediate field extensions and subgroups of the Galois group to the setting of Hopf-Galois extensions. It is well-known (see e.g.\ \cite{Schauenburg98}) that, in full generality, such a correspondence does not hold: in general, the correspondence map from Hopf subalgebras to intermediate extensions is injective but not surjective. 
Whereas often in the literature, one has been looking for conditions identifying classes of Hopf-Galois extensions for which the correspondence between intermediate fields and Hopf subalgebras becomes bijective, we take a different approach here, and characterize for an arbitrary separable Hopf-Galois extension, those intermediate extensions for which the correspondence holds.

More precisely, the aim of the present paper is twofold.
We extend Hopf-Galois theory to infinite field extensions and provide a natural definition of {\em $H$-subextensions}, for which we prove a Galois correspondence theorem for separable Hopf-Galois extensions.
This leads to new results already in the case of finite separable field extensions. Whilst our proof establishes the finite case first, in this introduction we present the general infinite statements immediately.

For our formulation of infinite Hopf-Galois theory we need the notion of proartinian Hopf algebras and proartinian coalgebras. These have been considered earlier (for example, by Fontaine \cite{Fontaine}) and are basically limits of projective systems of finite dimensional Hopf algebras or coalgebras, respectively. As such they carry a natural topology and the discrete ones are exactly those of finite dimension. It is further natural to introduce proartinian $H$-modules and proartinian $H$-module algebras. The reader is referred to Section~\ref{sec:proartinian} for a presentation of this theory.

Classical (infinite) Galois theory gives an isomorphism $L\llbracket G \rrbracket \cong L \hot_K K \llbracket G \rrbracket \cong \End_K(L)$ if $L/K$ is a Galois extension (i.e.\ separable and normal) with (profinite) Galois group~$G$.
Hopf-Galois theory generalises this as follows:
The full endomorphism ring of a field extension $L/K$ is already `encoded' in a (proartinian) Hopf algebra~$H$ that is defined over the base field~$K$.

\begin{defi}\label{definfiniteHG}
Let $L/K$ be a field extension, $H$ a proartinian Hopf $K$-algebra and $L$ a discrete left $H$-module algebra.
This action leads to the definition of a {\em canonical map} (see \eqref{defcanproartinian})
\[ \can: L \hot_K H \to \End_K(L). \]
We say that the extension $L/K$ is {\em $H$-Galois} if the canonical map $\can$ is a homeomorphism of topological $L$-vector spaces.
\end{defi}

We include a short digression on our interpretation of the definition, as it paves the way for our definition of $H$-subextensions, leading to our correspondence theorem.
For a field extension $L/K$ and a proartinian $L$-vector space~$V$, we say that a proartinian $K$-vector space~$W$ is a {\em $K$-rational structure of~$V$} if $L \hot_K W$ is a proartinian $L$-vector space isomorphic to~$V$. In the case of finite dimensional vector spaces, $W\subset V$ is a $K$-rational structure of~$V$ if any $K$-basis of~$W$ is an $L$-basis of~$V$.

From this perspective, in an $H$-Galois extension $L/K$, the proartinian Hopf algebra provides a $K$-rational structure of $\End_K(L)$, just as $K\llbracket G \rrbracket$ provides a $K$-rational structure of $L\llbracket G \rrbracket \cong \End_K(L)$ in the case of a classical Galois extension.
This idea can be naturally carried over to any intermediate field $K\subset L_0\subset L$. For that purpose, we define the {\em annihilator of $L_0$ in~$H$} by
\begin{equation}\label{defi:JL0}
J(L_0) = \{h \in H \;|\; \forall\, x \in L_0: h \cdot x = 0 \}.
\end{equation}
It is a closed left ideal two-sided proartinian coideal of $H$ (see Lemma~\ref{le:propertiesJ(L0)}).
Via the canonical map, we can view $H/J(L_0)$ inside $\Hom_K(L_0,L)$. The principal idea is that $H/J(L_0)$ will play the same role for subextensions as $H$ does for the entire Hopf-Galois extension, namely that of a $K$-rational structure.

\begin{defi}\label{defi:Hsubext}
Let $L/K$ be $H$-Galois and $L_0$ be an intermediate field.
We say that $L_0/K$ is an {\em $H$-subextension} if $H/J(L_0)$ is a $K$-rational structure of $\Hom_K(L_0,L)$ in the sense that the canonical map
\[\can_0: L \hot H/J(L_0) \to \Hom_K(L_0,L)\]
is a homeomorphism. 
\end{defi}

In fact, as we will show (see Lemma~\ref{le:propertiesJ(L0)}), it is enough to assume that $\can_0$ is injective, as it is already a quotient map of topological vector spaces.
The classical notion of normal field extensions leads in our setting to the following natural definition.

\begin{defi}\label{defi:stable}
Let $L/K$ be $H$-Galois.
We say that an intermediate field $L_0$ is {\em $H$-stable} if $H \cdot L_0 \subseteq L_0$.
If, furthermore, $L_0$ is an $H$-subextension, then we call it {\em $H$-normal}.
\end{defi}

In classical Galois theory, subextensions are obtained and characterised as fixed fields.
This point of view carries on to Hopf-Galois theory via the following definition.

\begin{defi}\label{defi:fix}
Let $L$ be a left $H$-module $K$-algebra.
Given any subset $F \subset H$, we define the {\em fixed space by~$F$} or the space of {\em $F$-invariants} as
\[ L^F := \big\{x \in L \st h\cdot x = \epsilon(h)x \quad \forall h \in F\big\}.\]
\end{defi}
Remark that $L^F$ is indeed a $K$-subspace of~$L$. If $F$ is a set of grouplike elements, then $\epsilon(g)=1$, and $L^F$ is a fixed field in the classical sense. On the other hand, if $F$ is a proartinian coideal in $H$, then $\epsilon(F)=0$, and $L^F$ consists of elements $x\in L$ satisfying $h\cdot x=0$ for all $h\in F$.

Since, as in classical Galois theory, our correspondence theorem for subfields passes via morphisms, we also have to assume the separability for the field extension, which ensures the existence of `enough' field morphisms.
The crucial point for us is that for a separable field extension $L/K$ with normal closure (or algebraic closure) $\tilde L$ the coalgebra $\Hom_K(L,\tilde L)$ has a (topological) $\tilde L$-basis of grouplike elements, namely exactly given by the field morphisms $L \to \tilde L$.
In the case of an $H$-Galois extension, via the canonical map, this then implies that $\tilde L \hot_K H$ is a (completed) group algebra.
The same arguments actually also give the converse: any $H$-extension such that $\tilde L \hot_K H$ is a (completed) group algebra is separable.
In our understanding, in the finite case, this use of separability is exactly the starting point of Greither-Pareigis theory (\cite{GreitherPareigis87}, see also Theorem~\ref{th:GP} below), many ideas of which we crucially apply and extend to the infinite case.

We now have all ingredients to state our main theorem.

\begin{theo}\label{theo:inf-main}
Let $L/K$ be a separable $H$-Galois extension for a proartinian Hopf algebra~$H$. Then the following maps are inclusion reversing bijections:
\begin{equation}\label{eq:corr:inf}
\xymatrix@R0.5cm{
\big\{L/L_0/K \st L_0 \t{ $H$-subextension}\big\} \ar@<.5ex>[r]^-\Phi & \big\{I \subset H \st I \t{ closed left ideal, two-sided coideal}\big\} \ar@<.5ex>[l]^-{\Psi} \\
L_0 \ar@{|->}[r] & J(L_0)\\
L^I  & I \ar@{|->}[l]\\
}
\end{equation}
Moreover, the above correspondence restricts to a bijection between the following subsets
\[
\xymatrix@R0.5cm{
\big\{L/L_0/K \st L_0 \t{ $H$-normal}\big\} \ar@<.5ex>[r]^-\Phi
& \big\{I \subset H \st I \t{ closed Hopf ideal}\big\}
\ar@<.5ex>[l]^-{\Psi} \\
}\]
Furthermore, if `closed' is replaced by `open', both correspondences work with the restriction that $L_0/K$ be finite.
\end{theo}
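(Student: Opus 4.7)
The plan is to establish the correspondence first in the finite-dimensional case via Greither--Pareigis theory, and then extend to the proartinian setting by inverse limits.

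\textbf{Finite case.} Suppose $L/K$ is finite separable $H$-Galois, with normal closure $\tilde L$, $\Gamma = \Gal(\tilde L/K)$ and $X = \Hom_K(L, \tilde L)$. By separability and Greither--Pareigis, base change yields $\tilde L \hot_K H \cong \tilde L[N]$ for a regular subgroup $N\le\Perm(X)$ normalized by $\Gamma$, and $\tilde L\hot_K L\cong \Map(X,\tilde L)$ with $N$ acting by translation. I would first establish the key dictionary: $\Gamma$-stable left ideal, two-sided coideals of $\tilde L[N]$ are exactly the ideals $\tilde L[N]\cdot \langle m-1 : m\in M\rangle$ for $\Gamma$-stable subgroups $M\le N$, while on the field side $\tilde L\hot_K L^I\cong \Map(X/M,\tilde L)$ (the orbit space being a $\Gamma$-set precisely because of $\Gamma$-stability). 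Faithfully flat descent from $\tilde L$ to $K$ then transports this to the desired correspondence over $K$.

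\textbf{Bijection identities.} With the dictionary in hand, both are nearly formal. The inclusions $L_0\subseteq L^{J(L_0)}$ and $I\subseteq J(L^I)$ are immediate from the definitions. For $L^{J(L_0)}\subseteq L_0$: given $x\in L\setminus L_0$, pick $\phi\in\End_K(L)$ vanishing on $L_0$ but not at $x$, and write $\phi = \can(z)$ for some $z\in L\hot J(L_0)$; this produces an $h\in J(L_0)$ with $h\cdot x\ne 0$. For $J(L^I)\subseteq I$: after base change, an element $\bar h = \sum c_n n\in\tilde L[N]$ annihilates $\Map(X/M,\tilde L)$ iff the $c_n$ sum to $0$ on each left coset $nM$, which is exactly membership in $\tilde L[N]\cdot\langle m-1\rangle = \tilde L\hot I$. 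A parallel $\tilde L$-dimension count establishes simultaneously that $L^I$ is an $H$-subextension (i.e.\ $\can_0$ is iso), so that $\Psi$ is well-defined.

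\textbf{Normal refinement.} $L_0$ is $H$-stable iff the $H$-action restricts to $L_0$, iff $J(L_0)$ is the kernel of an algebra map $H\to\End_K(L_0)$, hence also a two-sided ideal; under the dictionary this amounts to $M\lhd N$, in which case $H/J(L_0)$ inherits a Hopf quotient structure and $J(L_0)$ is a Hopf ideal. Conversely, any closed Hopf ideal $I$ gives a Hopf algebra $H/I$ acting on $L^I$, making $L^I$ naturally $(H/I)$-Galois over $K$ and in particular $H$-stable.

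\textbf{Infinite case.} Writing $H=\invlim_i H_i$ with finite-dimensional Hopf quotients $H_i$ yields a cofinal tower of finite $H_i$-Galois subextensions $L_i = L^{\Ker(H\to H_i)}$ exhausting $L$. A closed left ideal, two-sided coideal $I\subseteq H$ is determined by the compatible system of its projections $\pi_i(I)\subseteq H_i$, and dually an $H$-subextension $L_0$ is the directed union $\bigcup(L_0\cap L_i)$ of finite $H_i$-subextensions, with $J(L_0) = \invlim J(L_0\cap L_i)$. The finite-case bijection thus assembles level-wise into the infinite one; Hopf ideals and normality pass through the limit. Open ideals are exactly those of finite codimension (pulled back from some $H_i$), matching finite subextensions. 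The main obstacle is the finite-case reverse inclusion $J(L^I)\subseteq I$, which really uses the Greither--Pareigis classification together with faithfully flat descent; the new subtlety in the infinite case is the topological bookkeeping, ensuring that the inverse-limit operations commute correctly with the constructions $I\mapsto L^I$ and $L_0\mapsto J(L_0)$.
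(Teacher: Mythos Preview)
Your approach—Greither--Pareigis plus faithfully flat descent in the finite case, then assembly via inverse limits—is exactly the paper's strategy, and your coset computation in $\tilde L[N]$ for $J(L^I)\subseteq I$ is the paper's Lemma~\ref{lem:main}. One step needs care, however: in the infinite case you assert that for an $H$-subextension $L_0$ each $L_0\cap L_i$ is automatically an $H_i$-subextension, but this is not immediate from the definition and risks circularity (to apply the finite-case dictionary at level~$i$ you would already want to know that the projection of $J(L_0)$ to $H_i$ is a two-sided coideal, which is part of what you are trying to prove). The paper sidesteps this by arguing directly, without decomposing $L_0$ level-wise: it observes that $\can_0$ factors as
\[
L\hot H/J(L_0)\twoheadrightarrow \Hom_K(L^{J(L_0)},L)\twoheadrightarrow \Hom_K(L_0,L),
\]
and the $H$-subextension hypothesis (that the composite is injective) forces both surjections to be isomorphisms, yielding $L_0=L^{J(L_0)}$ immediately—this is your element-chasing argument rephrased. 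The genuinely hard direction, that $L^I$ is an $H$-subextension with $J(L^I)=I$ for arbitrary closed~$I$, does require the infinite Greither--Pareigis picture and passes through the open case first, as you anticipate.
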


In the case of classical Galois extensions, one can freely pass between considering normal subgroups $H \lhd G$ of the Galois group and the corresponding quotients $G/H$. This point of view also exists for cocommutative Hopf algebras (see Theorem~\ref{theo:subalgebra-ideal} and Corollary~\ref{cor:subalgebra-ideal}). In particular, in the case of a finite extension $L/K$, the sets on the right in the above correspondences can be replaced by the set of Hopf subalgebras of $H$, and the set of normal Hopf subalgebras, respectively.

Whereas most of the lemmas needed to prove our Hopf-Galois correspondence are elementary, as already alluded to above, Greither-Pareigis theory, extended to infinite separable Hopf-Galois extensions, plays a key role in proving the correspondence of Theorem~\ref{theo:inf-main}.
It additionally also provides an alternative formulation of it in purely group-theoretical terms.
Greither-Pareigis theory relates infinite separable Hopf-Galois extensions $L/K$ with strictly transitive actions of profinite groups on the set of $K$-embeddings into a normal closure $\tilde L$ of~$K$, see Proposition~\ref{prop:GPinf}. Particularly, the separability leads to an isomorphism $\tilde L \hot H \cong \tilde L \llbracket N \rrbracket$ with a profinite group~$N$, equipped with a $G$-action.

\begin{cor}\label{co:correspGPinf}
There is an explicit bijective correspondence:
\[
\xymatrix@R0.5cm{
\big\{L/L_0/K \st L_0 \t{ $H$-subextension }\big\} \ar@<.5ex>[r]^-{\Phi'} & \big\{V \subset N \st V \t{ is a closed $G$-equivariant subgroup}\big\} \ar@<.5ex>[l]^-{\Psi'} 
}\]
restricting to another bijective correspondence
\[
\xymatrix@R0.5cm{
\big\{L/L_0/K \st L_0 \t{ $H$-normal }\big\} \ar@<.5ex>[r]^-{\Phi'}
& \big\{V \subset N \st V \t{ is a closed $G$-equivariant normal subgroup}\big\}
\ar@<.5ex>[l]^-{\Psi'} \\
}\]
The $G$-equivariance of the subgroup can also be rephrased as being normalized by $\lambda(G)$ (the group of left translations via $G$) in $\Perm(G/G')$.
\end{cor}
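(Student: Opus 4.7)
The plan is to combine Theorem~\ref{theo:inf-main} with faithfully flat descent along the Greither--Pareigis isomorphism $\tilde L \hot_K H \cong \tilde L \llbracket N \rrbracket$, where $\tilde L$ is a normal closure of $L/K$, $G = \Gal(\tilde L/K)$ acts on $N \subset \Perm(G/G')$ through the left-translation map $\lambda : G \to \Perm(G/G')$, and $G' = \Gal(\tilde L/L)$. First I would invoke Theorem~\ref{theo:inf-main}, which already yields a bijection $L_0 \mapsto J(L_0)$ between $H$-subextensions and closed left ideals, two-sided coideals of $H$, restricting to a bijection between $H$-normal subextensions and closed Hopf ideals. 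It therefore suffices to match these ideals of $H$ with closed $G$-equivariant subgroups of $N$ (respectively, the normal ones).

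Next I would establish the basic dictionary inside a completed group algebra: for a profinite group $N$ and a field $k$, closed left ideals of $k\llbracket N \rrbracket$ that are simultaneously two-sided coideals correspond bijectively and inclusion-reversingly to closed subgroups $V \subset N$, via $V \mapsto \ker\bigl(k\llbracket N \rrbracket \twoheadrightarrow k\llbracket N/V \rrbracket\bigr)$, with inverse recovering $V$ from the grouplike elements of the quotient coalgebra. By a cofiltered-limit argument this reduces to the classical finite case. Under the same bijection, closed normal subgroups of $N$ correspond precisely to those ideals that are moreover Hopf ideals.

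Now I would implement descent. Since the isomorphism $\tilde L \hot_K H \cong \tilde L \llbracket N \rrbracket$ provided by Greither--Pareigis is $G$-equivariant by construction, extension of scalars $I \mapsto \tilde L \hot_K I$ gives a bijection between closed left ideals / two-sided coideals of $H$ and closed $G$-equivariant left ideals / two-sided coideals of $\tilde L \llbracket N \rrbracket$; the inverse is taking $G$-invariants, and the bijectivity amounts to faithfully flat descent along $\tilde L/K$. Composing with the dictionary of the previous paragraph yields the desired bijection $\Phi'$ and the same argument, restricted to Hopf ideals and normal subgroups, yields the second bijection. Finally, since the $G$-action on $N$ inside $\Perm(G/G')$ is conjugation by $\lambda(g)$, a subgroup $V \subset N$ is $G$-stable if and only if $\lambda(g) V \lambda(g)^{-1} \subseteq V$ for every $g \in G$, i.e.\ if and only if $V$ is normalized by $\lambda(G)$, which is the stated reformulation.

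The main obstacle I anticipate is the topological bookkeeping in the group-algebra dictionary and the descent step: one has to verify that a quotient of $k\llbracket N \rrbracket$ by a closed left ideal, two-sided coideal is again a cofiltered limit of finite pointed coalgebras spanned by grouplikes, so that the inverse assignment $I \mapsto V$ is well-defined, closed, and compatible with the $G$-action transported from the Greither--Pareigis isomorphism. Once this topological verification is in place, the remaining statements are formal consequences of Theorem~\ref{theo:inf-main} and the equivariance of the Greither--Pareigis identification.
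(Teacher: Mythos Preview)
Your proposal is correct and follows essentially the same route as the paper: the paper packages your steps~2 and~3 into Proposition~\ref{inf:H/I=N/V}, whose proof reduces via projective limits to the finite case (Lemma~\ref{H/I=N/V}), which in turn is proved exactly by the group-algebra dictionary (Proposition~\ref{grouplikesHopf}) together with faithfully flat descent (Proposition~\ref{pr:descent}), and then composes with Theorem~\ref{theo:inf-main}. The topological bookkeeping you flag as the main obstacle is precisely what the paper handles by passing through the finite quotients $H/I \cong \tilde L[N_I]$ and taking limits, rather than working directly in $\tilde L\llbracket N\rrbracket$.
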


Moreover, in the finite case, we obtain in section \ref{HGtheory} several other analogues of statements from classical Galois theory in the setting of Hopf-Galois extensions. For example, we show that the intersection and compositum of Hopf-Galois subextensions is again a Hopf-Galois subextension (see Proposition~\ref{prop:intercomp}).

Finally, we show that the Hopf-Galois condition in the infinite setting, fits in the framework of Hopf-Galois extensions in terms of coactions (of usual Hopf algebras) rather than actions, by taking a suitable restricted dual of our proartinian Hopf algebra, Proposition~\ref{prop:coaction}.

\subsection*{Notational conventions}

Throughout, $K$ will denote a field. Unadorned tensor products are tensor products over~$K$. The action of the Hopf algebra is denoted by $\cdot$ everywhere, multiplication inside $H$ and $L$ by concatenation (no symbol). In order to be compatible in the case of a group algebra, group actions are also written with $\cdot$. If the group is a group of automorphisms, then the product of two elements is also denoted $\circ$. The group algebra of a group $G$ over the field $K$ will be denoted $K[G]$.

\section{Finite Hopf-Galois Theory}

\subsection{Preliminaries on Hopf algebras and some first observations}\label{preliminariesHopf}

Recall that a $K$-coalgebra $C$ is a vector space endowed with a comultiplication map $\Delta:C\to C\ot C$ and a counit map $\epsilon:C\to K$ satisfying the usual coassociativity and counitality conditions. We will use the Sweedler notation for comultiplication: $\Delta(c)=c_{(1)}\ot c_{(2)}$, for all $c\in C$, so that the coassociativity and counitality conditions can be expressed as 
\begin{eqnarray*}
&c_{(1)(2)}\ot c_{(1)(2)}\ot c_{(2)} = c_{(1)}\ot c_{(2)(1)}\ot c_{(2)(2)} = c_{(1)}\ot c_{(2)}\ot c_{(3)};\\
&c_{(1)}\epsilon(c_{(2)})=c=\epsilon(c_{(1)}) c_{(2)}.
\end{eqnarray*}
A coalgebra is called cocommutative if $c_{(1)}\ot c_{(2)}=c_{(2)}\ot c_{(1)}$ for all $c\in C$. A grouplike element in a coalgebra $C$ is an element $x\in C$ such that $\Delta(x)=x\ot x$ and $\epsilon(x)=1$.
A $K$-bialgebra $H$ is a $K$-algebra that has a coalgebra structure such that $\Delta$ and $\epsilon$ are $K$-algebra homomorphisms. A Hopf $K$-algebra is a $K$-bialgebra for which there exists a (unique) antipode map $S:H\to H$ satisfying $S(h_{(1)})h_{(2)}=\epsilon(h)1=h_{(1)}S(h_{(2)})$. A group algebra $K[G]$ for a finite group~$G$ is a Hopf algebra by defining the coalgebra structure such that all elements of the group $G$ are grouplike.

A two-sided coideal in a $K$-coalgebra $H$ is a $K$-subspace $I\subset H$ such that $\Delta(I)\subset H\ot I+I\ot H$ and $\epsilon(I)=0$. A Hopf ideal in a Hopf $K$-algebra $H$ is a two-sided ideal and two-sided coideal that is stable under the antipode. It is well-known that, for example, when $H$ is cocommutative, any two-sided ideal and two-sided coideal is stable under the antipode, i.e. is a Hopf ideal (see \cite{Nichols}). A Hopf subalgebra $H_0$ of $H$ is called normal if $h_{(1)}xS(h_{(2)})$ and $S(h_{(1)})xh_{(2)}$ belong to $H_0$ for all $x\in H_0 $ and all $h\in H$.

\begin{defi}\label{defi:coinvariant-augmentation}
Let $I \subset H$ be a left ideal two-sided coideal and denote by $\pi : H \to H/I$ the canonical surjection. We define the set of left $H/I$-coinvariants of $H$:
\[{}^{\text{co}\,H/I}H := \big\{h \in H \;|\; \pi(h_{(1)}) \ot h_{(2)} = \pi(1_H) \ot h\big\}.\]
For a Hopf subalgebra $A \subset H$, we define the augmentation ideal
\[A^+ := A \cap \ker \epsilon = \big\{a \in A \;|\; \epsilon(a) = 0\big\}.\]
\end{defi}

\begin{theo}[\cite{Newman}, {\cite[Thm.~4.15]{Schneider90}}]\label{theo:subalgebra-ideal}
Let $H$ be a cocommutative Hopf algebra. Then
\[\xymatrix{
\big\{I \subset H \;|\; \text{$I$ left ideal two-sided coideal}\big\}
\ar[r]<2pt>^-{\varphi} & 
\big\{A \subset H \;|\; \text{$A$ Hopf subalgebra}\big\} 
\ar[l]<2pt>^-{\psi}}\]
with $\varphi(I) = {}^{\text{co}\,H/I}H$ and $\psi(A) = HA^+$ are inverse bijections.
\end{theo}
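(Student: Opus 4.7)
The plan is to check that both maps are well-defined and then to verify the two composition identities. Well-definedness of $\psi$ is direct: $HA^+$ is evidently a left ideal, and it is a two-sided coideal because $\epsilon(HA^+)=0$ and, for any $a\in A^+$, one has
\[
\Delta(a) = \bigl(a_{(1)}-\epsilon(a_{(1)})\cdot 1\bigr)\otimes a_{(2)} + 1\otimes a \in A^+\otimes A + A\otimes A^+,
\]
so that $\Delta(HA^+)\subseteq H\otimes HA^+ + HA^+\otimes H$ after multiplying by $\Delta(h)$ on the left. For $\varphi$, the subset ${}^{\text{co}\,H/I}H$ contains $1$ and is closed under multiplication by a short Sweedler computation that uses only the left-ideal property of $I$ (which makes $\pi\colon H\to H/I$ a left $H$-module map). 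To promote this subalgebra to a \emph{Hopf} subalgebra one invokes cocommutativity: it forces the left and right coinvariants to coincide and shows that $\Delta$ and $S$ preserve ${}^{\text{co}\,H/I}H$.

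For $\varphi\circ\psi = \mathrm{id}$, that is ${}^{\text{co}\,H/HA^+}H = A$, the inclusion $A\subseteq{}^{\text{co}\,H/HA^+}H$ is elementary: for $a\in A$ each Sweedler component satisfies $a_{(1)}-\epsilon(a_{(1)})\cdot 1 \in A^+\subseteq HA^+$, hence $\pi(a_{(1)})=\epsilon(a_{(1)})\pi(1)$ and counitality gives $\pi(a_{(1)})\otimes a_{(2)} = \pi(1)\otimes a$. Symmetrically, for $\psi\circ\varphi = \mathrm{id}$ the inclusion $H\cdot({}^{\text{co}\,H/I}H)^+\subseteq I$ is immediate: applying $\mathrm{id}\otimes\epsilon$ to the coinvariance equation for an $a$ with $\epsilon(a)=0$ yields $\pi(a)=0$, so $a\in I$ and $H\cdot({}^{\text{co}\,H/I}H)^+ \subseteq HI\subseteq I$.

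The main obstacle lies in the reverse inclusions ${}^{\text{co}\,H/HA^+}H\subseteq A$ and $I\subseteq H\cdot({}^{\text{co}\,H/I}H)^+$, which carry the substance of the theorem. They rest on the classical fact (going back to Nichols, and developed further by Larson--Sweedler and Schneider) that, for a Hopf subalgebra $A$ of a cocommutative Hopf algebra $H$, the algebra $H$ is faithfully flat as a right $A$-module, and consequently $H/HA^+\cong H\otimes_A K$ with $A$ acting on the base field $K$ via $\epsilon$. The fundamental theorem of relative Hopf modules then provides an equivalence of categories between left $H/I$-comodules (equivalently, relative $(H,H/I)$-Hopf modules) and left $A$-modules, with quasi-inverse $N\mapsto H\otimes_A N$. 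Applying this equivalence to $H$ itself, viewed as a relative Hopf module over its own coinvariant subalgebra, delivers both of the non-trivial equalities and closes the argument. In practice one can simply invoke the cited results of Newman and Schneider, of which our statement is the direct cocommutative form.
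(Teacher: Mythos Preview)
The paper does not supply its own proof of this theorem: it is stated as a quotation from the literature, with the citation to Newman and to Schneider's Theorem~4.15 serving in lieu of argument. Your sketch is therefore not competing with anything in the paper itself; rather, it is an outline of precisely the arguments those references contain. The easy halves (well-definedness of $\psi$ and $\varphi$, the inclusions $A\subseteq{}^{\text{co}\,H/HA^+}H$ and $H\cdot({}^{\text{co}\,H/I}H)^+\subseteq I$) are handled correctly, and you rightly identify that the substantive reverse inclusions rest on faithful flatness of $H$ over a Hopf subalgebra in the cocommutative case together with the structure theorem for relative Hopf modules. This is exactly the content of the cited Newman--Schneider results, so your proposal is correct and in full agreement with the paper's approach, which is simply to invoke those references.
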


\begin{cor}[{\cite[Theorem 3.4.6]{Montgomery}}] \label{cor:subalgebra-ideal}
The bijective correspondence from the previous theorem can be restricted to a bijection between the normal Hopf subalgebras and Hopf ideals of a cocommutative Hopf algebra $H$.
\end{cor}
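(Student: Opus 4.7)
The plan is to verify that the bijection of Theorem~\ref{theo:subalgebra-ideal} restricts as asserted, i.e.\ that for a Hopf subalgebra $A\subset H$ the ideal $\psi(A)=HA^+$ is a Hopf ideal precisely when $A$ is normal. A useful preliminary remark, already recorded in the text, is that in a cocommutative Hopf algebra any two-sided ideal that is simultaneously a two-sided coideal is automatically stable under the antipode, hence is a Hopf ideal \cite{Nichols}. Since $HA^+$ is a priori a left ideal and two-sided coideal (that is part of the content of Theorem~\ref{theo:subalgebra-ideal}), the forward direction reduces to showing that $HA^+$ is in addition a right ideal.

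\smallskip
\noindent\textbf{Forward direction.} Assume $A$ is normal. For $a\in A^+$ and $k\in H$, the counit and antipode axioms yield
\[ak \;=\; k_{(1)}\bigl(S(k_{(2)})\,a\,k_{(3)}\bigr).\]
Normality places the bracketed factor in $A$, and applying $\epsilon$ (using $\epsilon(a)=0$) shows it lies in $A^+$. Multiplying on the left by any $h\in H$ gives $hak\in HA^+$, so $HA^+$ is a right ideal, and hence a Hopf ideal by the preliminary observation.

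\smallskip
\noindent\textbf{Reverse direction.} Assume $I$ is a Hopf ideal, so that $\pi\colon H\to H/I$ is a Hopf algebra morphism, and set $A=\varphi(I)$. Given $a\in A$ and $h\in H$, expand
\[\Delta\bigl(h_{(1)}aS(h_{(2)})\bigr)=h_{(1)}a_{(1)}S(h_{(4)})\otimes h_{(2)}a_{(2)}S(h_{(3)}),\]
using that $\Delta$ is an algebra morphism together with $\Delta\circ S=(S\otimes S)\circ\Delta$, which holds because $H$ is cocommutative. Apply $\pi$ to the first tensorand; the result is the image of $\pi(a_{(1)})\otimes a_{(2)}$ under a well-defined linear endomorphism of $H/I\otimes H$, so the coinvariance relation $\pi(a_{(1)})\otimes a_{(2)}=\pi(1)\otimes a$ rewrites it as $\pi\bigl(h_{(1)}S(h_{(4)})\bigr)\otimes h_{(2)}aS(h_{(3)})$. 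Cocommutativity of $H$ allows an arbitrary permutation of the iterated coproduct components of $h$, turning this into $\pi(h_{(1)}S(h_{(2)}))\otimes h_{(3)}aS(h_{(4)})$; the antipode axiom collapses the first factor to $\epsilon(\cdot)\pi(1)$, and counitality then yields $\pi(1)\otimes h_{(1)}aS(h_{(2)})$, whence $h_{(1)}aS(h_{(2)})\in A$. The relation $S(h_{(1)})ah_{(2)}\in A$ is handled symmetrically, proving $A$ normal.

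\smallskip
\noindent\textbf{Main obstacle.} The forward direction is a one-line Sweedler manipulation. The substance of the argument lies in the reverse direction, where the task is to maneuver the iterated coproducts of $h$ and $a$ into a form in which the coinvariance of $a$ applies. Cocommutativity plays a double role here: it justifies permuting the $h_{(i)}$ freely so as to expose an antipode cancellation, and it underlies $\Delta\circ S=(S\otimes S)\circ\Delta$ which produces the clean factorisation of $\Delta(h_{(1)}aS(h_{(2)}))$. Once the computation is set up as the evaluation of a linear map on $\pi(a_{(1)})\otimes a_{(2)}$, the rest of the verification is essentially automatic.
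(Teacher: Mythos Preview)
The paper does not supply its own proof of this corollary; it is recorded as a citation to \cite[Theorem~3.4.6]{Montgomery}, followed only by the remark that for a Hopf ideal $I$ the coinvariants ${}^{\mathrm{co}\,H/I}H$ coincide with the Hopf-algebra kernel of $H\to H/I$. Your proposal therefore goes beyond what the paper provides, and I have checked it as a standalone argument.

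Your proof is correct. The forward direction is the standard one-line trick $ak=k_{(1)}\bigl(S(k_{(2)})ak_{(3)}\bigr)$, and your reduction to showing $HA^+$ is a right ideal (invoking the fact, already noted in the paper, that in the cocommutative case a two-sided bi-ideal is automatically $S$-stable) is exactly right. In the reverse direction your Sweedler manipulation is sound: the expansion $\Delta\bigl(h_{(1)}aS(h_{(2)})\bigr)=h_{(1)}a_{(1)}S(h_{(4)})\otimes h_{(2)}a_{(2)}S(h_{(3)})$ holds (in fact it holds without cocommutativity, via $\Delta S=(S\otimes S)\Delta^{\mathrm{op}}$, but your justification through cocommutativity is also fine), and the device of viewing $(\pi\otimes\mathrm{id})\Delta(h_{(1)}aS(h_{(2)}))$ as the image of $\pi(a_{(1)})\otimes a_{(2)}$ under a fixed linear map is a clean way to invoke the coinvariance relation. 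The permutation of Sweedler indices you perform is legitimate in a cocommutative coalgebra, and the final collapse via the antipode axiom is correct. This is essentially the argument one finds in Montgomery.
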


If $I$ is a Hopf ideal, the set of left $H/I$-coinvariants of $H$ is exactly the kernel of the canonical surjection $H\to H/I$ in the category of Hopf algebras.

The following follows easily by a standard computation. 

\begin{lem}\label{lem:sub-basics}
Let $H$ be a Hopf-algebra.
\begin{enumerate}[(a)]
\item Let $I_1, I_2$ be left ideals two-sided coideals in~$H$. Then $I_1 + I_2$ is also a left ideal two-sided coideal in~$H$.
\item Let $I_1, I_2$ be Hopf ideals in~$H$. Then $I_1 + I_2$ is also a Hopf ideal in~$H$.
\item Let $H_1, H_2$ be Hopf-subalgebras of~$H$. Then $H_1 \cap H_2$ is also a Hopf-subalgebra of~$H$.
\item Let $H_1, H_2$ be normal Hopf-subalgebras of~$H$. Then $H_1 \cap H_2$ is also a normal Hopf-subalgebra of~$H$.
\end{enumerate}
\end{lem}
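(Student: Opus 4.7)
The plan is to verify the four claims by unwinding the definitions of coideal, ideal, Hopf ideal, Hopf subalgebra, and normal Hopf subalgebra, and checking that each defining condition is preserved under the indicated operation. Parts (a) and (b) concern sums of subspaces and are formal, while parts (c) and (d) rely on the key fact that tensoring over a field preserves intersections of subspaces, which is where the only non-trivial input enters.

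For (a), the sum $I_1+I_2$ is obviously a left ideal. Since $\Delta(I_j)\subset H\otimes I_j+I_j\otimes H$, adding gives $\Delta(I_1+I_2)\subset H\otimes(I_1+I_2)+(I_1+I_2)\otimes H$, and $\epsilon(I_1+I_2)\subset\epsilon(I_1)+\epsilon(I_2)=0$. Part (b) is identical, with the additional observations that a sum of two-sided ideals is again two-sided, and $S(I_1+I_2)\subset S(I_1)+S(I_2)\subset I_1+I_2$ since each $I_j$ is antipode-stable.

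For (c), the algebra and antipode conditions on $H_1\cap H_2$ are immediate from the analogous conditions on each $H_j$. The only point requiring comment is that $\Delta(H_1\cap H_2)\subset(H_1\cap H_2)\otimes(H_1\cap H_2)$. If $h\in H_1\cap H_2$ then $\Delta(h)$ lies in both $H_1\otimes H_1$ and $H_2\otimes H_2$, so it suffices to establish
\begin{equation*}
(H_1\otimes H_1)\cap(H_2\otimes H_2)=(H_1\cap H_2)\otimes(H_1\cap H_2).
\end{equation*}
Writing $H_i\otimes H_j=(H_i\otimes H)\cap(H\otimes H_j)$ and using that each $H_j\otimes H$ and $H\otimes H_j$ is $K$-flat, so that intersections pass through the tensor product, this identity reduces to $(H_1\otimes H)\cap(H_2\otimes H)=(H_1\cap H_2)\otimes H$ and its symmetric version, both of which hold over a field. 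This is the one step that is not pure symbol pushing, and I expect it to be the main (and only) obstacle.

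Part (d) then follows at once from (c) once normality is checked: for $x\in H_1\cap H_2$ and $h\in H$, each of $h_{(1)}xS(h_{(2)})$ and $S(h_{(1)})xh_{(2)}$ lies in $H_1$ by normality of $H_1$ and in $H_2$ by normality of $H_2$, hence in $H_1\cap H_2$. Since $H_1\cap H_2$ is already a Hopf subalgebra by (c), it is a normal Hopf subalgebra.
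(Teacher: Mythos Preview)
Your proof is correct and is precisely the ``standard computation'' the paper alludes to without writing it out. You have correctly identified that the only non-formal step is the identity $(H_1\otimes H_1)\cap(H_2\otimes H_2)=(H_1\cap H_2)\otimes(H_1\cap H_2)$ in part~(c), and your reduction via $H_i\otimes H_j=(H_i\otimes H)\cap(H\otimes H_j)$ together with flatness over a field is the standard way to see this.
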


Recall that for a bialgebra $H$, the category ${}_H\Mod$ of (left) $H$-modules is monoidal, where the tensor product of two left $H$-modules $M$ and $N$ is endowed with a left $H$-module structure via the action
$$h\cdot (m\ot n)=h_{(1)}\cdot m\ot h_{(2)}\cdot n$$
for all $h\in H$, $m\in M$ and $n\in N$.
An algebra object $L$ in the category ${}_H\Mod$ is then called a (left) $H$-module algebra, which means that it is an algebra $L$ that is at the same time a left $H$-module such that the following compatibility conditions hold
$$h\cdot (xy)=(h_{(1)}\cdot x)(h_{(2)}\cdot y); \qquad h\cdot 1_L=\epsilon(h)1_L,$$
for all $h\in H$ and $x,y \in L$. Remark that if $h$ is a grouplike element in $H$, then the action of $h$ on $L$ gives an algebra automorphism. 

Similarly, a coalgebra object in the category  ${}_H\Mod$ is called a (left) $H$-module coalgebra. This is a coalgebra $C$ that is at the same time a left $H$-module such that the following compatibility conditions hold
\[\Delta(h\cdot c)=h_{(1)}\cdot c_{(1)}\ot h_{(2)}\cdot c_{(2)}; \qquad \epsilon_C(h\cdot c)=\epsilon_H(h)\epsilon_C(c),\]
for all $h\in H$ and $c\in C$. Again, if $h$ is a grouplike element in $H$, then $h$ acts on $C$ by a coalgebra automorphism.

For a left $H$-module algebra $L$, we can consider the smash product algebra $L\# H$, which is the vector space $H\ot L$ endowed with the multiplication
$$(x\ot h)(y\ot k)=x(h_{(1)}\cdot y) \ot h_{(2)}k.$$
Then a (left) $L\# H$-module $M$ is at the same time a left $L$-module and a left $H$-module satisfying the following compatibility condition
$$h\cdot (xm)= (h_{(1)}\cdot x)(h_{(2)}\cdot m)$$
for all $h\in H$, $x\in L$ and $m\in M$.
In our current setting, the famous {\em faithfully flat descent} for Hopf-Galois extensions can be expressed in the following way.

\begin{proposition}\label{pr:descent}
If $L/K$ is a finite Hopf-Galois extension, then the functors
$$\xymatrix{L\ot -:{\sf Vect}_K \ar@<.5ex>[r] 
& {_{L\# H}{\sf Mod}} :(-)^{H} \ar@<.5ex>[l] }$$
define an equivalence of categories, where $M^H=\{m\in M ~|~ h\cdot m=\epsilon(h)m, \forall h\in H\}$.

In case $H=K[G]$ is a group algebra, the above equivalence is even an equivalence of symmetric monoidal categories, and $M^H$ coincides with $M^G$, the set of $G$-invariants.
\end{proposition}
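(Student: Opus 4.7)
The plan is to deduce the equivalence from the standard Morita equivalence between $K$ and $\End_K(L)\cong\Mat_n(K)$ (with $n=\dim_K L<\infty$), using the Hopf-Galois hypothesis only to transport that equivalence along an algebra isomorphism $L\#H\cong\End_K(L)$. First I would verify that the canonical map $\can:L\#H\to\End_K(L)$, $x\otimes h\mapsto(z\mapsto x(h\cdot z))$, is an isomorphism not just of $K$-vector spaces (this is the Hopf-Galois assumption) but of $K$-algebras. Multiplicativity follows from the Sweedler computation
\[\can((x\otimes h)(y\otimes k))(z)=x(h_{(1)}\cdot y)(h_{(2)}\cdot(k\cdot z))=\can(x\otimes h)\bigl(\can(y\otimes k)(z)\bigr),\]
and unitality is immediate.

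Restriction of scalars along this algebra isomorphism gives an equality of categories ${}_{L\#H}\Mod={}_{\End_K(L)}\Mod$, under which the $L\#H$-module structure on $L\otimes_K V$ (for $V\in\Vect_K$, with trivial $H$-action on $V$) matches the $\End_K(L)$-action on the first tensor factor. I would then invoke classical Morita theory: since $\End_K(L)\cong\Mat_n(K)$ and $L$ is the corresponding simple projective generator with endomorphism ring $K$, the functors $L\otimes_K-$ and $\Hom_{\End_K(L)}(L,-)$ are quasi-inverse equivalences between $\Vect_K$ and ${}_{\End_K(L)}\Mod$.

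To match the right-hand functor in the proposition, I would identify $\Hom_{\End_K(L)}(L,M)\cong M^H$ naturally. For an $\End_K(L)$-linear $\phi:L\to M$, the condition $\phi(h\cdot 1)=h\cdot\phi(1)$ combined with $h\cdot 1=\epsilon(h)1$ in the module algebra $L$ forces $\phi(1)\in M^H$; conversely, for $m\in M^H$, the map $y\mapsto ym$ is $L$-linear by associativity and $H$-equivariant via the computation $h\cdot(ym)=(h_{(1)}\cdot y)(h_{(2)}\cdot m)=(h_{(1)}\cdot y)\epsilon(h_{(2)})m=(h\cdot y)m$. Taking $M=L$ recovers $L^H=\End_{\End_K(L)}(L)=K$ as a byproduct rather than as an independent input.

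For the monoidal refinement when $H=K[G]$, each grouplike $g\in G$ acts on $L$ by a $K$-algebra automorphism, and cocommutativity of $K[G]$ endows ${}_{L\#K[G]}\Mod$ with a symmetric monoidal structure given by $\otimes_L$ with diagonal $G$-action. The natural isomorphism $L\otimes_K(V\otimes_K W)\cong(L\otimes_K V)\otimes_L(L\otimes_K W)$ intertwines the trivial $G$-action on the left with the diagonal one on the right, so $L\otimes_K-$ is symmetric monoidal. The identity $M^H=M^G$ is immediate, since the grouplikes satisfy $\epsilon(g)=1$ and $K$-linearly span $K[G]$. The substantive input throughout is the algebra isomorphism $L\#H\cong\End_K(L)$, which is really the Hopf-Galois hypothesis in disguise; the main care needed is in tracking the $L\#H$-module structures under the Morita transfer and in verifying the natural isomorphism $\Hom_{\End_K(L)}(L,-)\cong(-)^H$.
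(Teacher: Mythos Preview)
Your argument is correct. The paper itself does not supply a proof of this proposition: it is stated as a reformulation, in the present finite-dimensional field setting, of the ``famous faithfully flat descent'' theorem for Hopf-Galois extensions, and is used as a black box. Your Morita-theoretic route is therefore not a comparison target but a genuine, self-contained proof that the paper omits.

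A few remarks on what your approach buys. The key step---upgrading the $K$-linear bijection $\can:L\otimes H\to\End_K(L)$ to an algebra isomorphism $L\#H\cong\End_K(L)$---is exactly right and is the reason the general faithfully flat descent machinery can be bypassed here: once $L\#H$ is a full matrix algebra over~$K$, the equivalence is classical Morita theory with $L$ as the progenerator. Your identification $\Hom_{L\#H}(L,-)\cong(-)^H$ via $\phi\mapsto\phi(1)$ is clean and correctly avoids assuming $L^H=K$ in advance; indeed, as you note, $L^H=K$ drops out as $\End_{\End_K(L)}(L)=K$, which is precisely how the paper later uses this proposition (in the proof of Lemma~\ref{lem:cocommutativeHopf}). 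For the monoidal statement when $H=K[G]$, your sketch is adequate: the diagonal $G$-action on $M\otimes_L N$ is well defined over~$L$ because each $g$ acts on $L$ by a ring automorphism, and the natural isomorphism $(L\otimes V)\otimes_L(L\otimes W)\cong L\otimes(V\otimes W)$ is $G$-equivariant by multiplicativity of that automorphism.
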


\begin{lemma}\label{grouplikes}
Let $A$ be a finite dimensional vector space over a field $K$, and let $\{(e_i,f_i)~|~i=1,\ldots,n\}\subset A\times A^*$ be a finite dual base.
\begin{enumerate}[(a)]
\item \label{grouplikes:a} There is a natural bijective correspondence between the algebra structures on $A$ and the coalgebra structures on $A^*$. Explicitly,
the correspondence between a multiplication for $A$ and a comultiplication for $A^*$ is given by the following formulas for all $a,b\in A$ and $f\in A^*$
\begin{equation}\label{multcomult}
\Delta(f)=\sum_{i,j} f(e_ie_j) f_i\ot f_j\qquad ab=\sum_i f_{i(1)}(a)f_{i(2)}(b)e_i .
\end{equation}
Otherwise stated
$$f(ab)=f_{(1)}(a)f_{(2)}(b)$$
for all $a,b\in A$ and $f\in A^*$.
\item \label{grouplikes:b} Considering an algebra structure on $A$ and a corresponding coalgebra structure on $A^*$ as in \eqref{grouplikes:a}, the set of grouplike elements in $A^*$ is exactly the set of algebra morphisms from $A$ to $K$. Moreover, the coalgebra $A^*$ has a base of grouplike elements if and only if the algebra $A$ has a base of orthogonal idempotents (i.e.\ $A$ is isomorphic to a product of copies of the base field).
\item \label{grouplikes:c} Let $L/K$ be any field extension and fix a $K$-algebra structure on $A$. Then $\Hom_K(A,L)$ is an $L$-coalgebra, whose grouplike elements are exactly the set of $K$-algebra morphisms $\Hom_{K-\Alg}(A,L)$.
\end{enumerate}
\end{lemma}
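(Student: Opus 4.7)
The proof rests on the standard finite-dimensional duality between algebra and coalgebra structures, together with an extension-of-scalars argument for part (c).

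For \eqref{grouplikes:a}, I would use that, since $A$ is finite-dimensional, the canonical pairing induces an isomorphism $A^* \ot A^* \cong (A \ot A)^*$. A multiplication $\mu \colon A \ot A \to A$ then dualises to a $K$-linear map $\mu^* \colon A^* \to (A\ot A)^* \cong A^* \ot A^*$, which I take as the comultiplication $\Delta$ on $A^*$; dually, the unit $K \to A$ provides the counit $A^* \to K$. A routine check shows that associativity and unitality of $\mu$ correspond exactly to coassociativity and counitality on $A^*$, and that this dualisation is manifestly bijective. Expanding $\Delta(f)$ in the dual basis $\{f_i\}$ and using $a = \sum_i f_i(a) e_i$ then yields the explicit formulas \eqref{multcomult}.

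For \eqref{grouplikes:b}, evaluating the correspondence from (a) on a grouplike $f \in A^*$ translates $\Delta(f) = f \ot f$ into $f(ab) = f(a)f(b)$ for all $a,b \in A$, while $\epsilon(f) = 1$ means $f(1_A) = 1$; so grouplikes coincide with $K$-algebra morphisms $A \to K$. Assume now that $A^*$ admits a basis $\{g_1,\ldots,g_n\}$ of grouplikes with dual basis $\{e_1,\ldots,e_n\}$ in $A$. The multiplicativity $g_i(e_j e_k) = g_i(e_j)g_i(e_k) = \delta_{ij}\delta_{ik}$ forces $e_j e_k = \delta_{jk} e_j$, so the $e_j$ are orthogonal idempotents; expanding $1_A = \sum c_j e_j$ and applying each $g_i$ moreover gives $c_i = 1$, hence $1_A = \sum_j e_j$ and $A \cong K^n$ as algebras. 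The converse is immediate, since the coordinate projections on $K^n$ form a grouplike basis of $(K^n)^*$ dual to the standard idempotent basis.

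For \eqref{grouplikes:c}, I would use the canonical isomorphism $\Hom_K(A,L) \cong L \ot_K A^*$ (valid because $A$ is finite-dimensional) to transport the $K$-coalgebra structure of $A^*$ furnished by (a) into an $L$-coalgebra structure on $\Hom_K(A,L)$. Unfolding the definitions via the identification $\Hom_K(A,L) \ot_L \Hom_K(A,L) \cong \Hom_K(A \ot A, L)$, the resulting comultiplication sends $f$ to the map $a \ot b \mapsto f(ab)$ and the counit is $f \mapsto f(1_A)$, so by the same computation as in (b), $f$ is grouplike precisely when it is a $K$-algebra morphism $A \to L$. The only real subtlety is bookkeeping around the two canonical isomorphisms $A^* \ot A^* \cong (A\ot A)^*$ and $L \ot_K A^* \cong \Hom_K(A,L)$, both of which rely on finite-dimensionality of $A$ and must be matched with the coalgebra structures coherently; beyond that, the lemma is essentially a standard finite-dimensional duality exercise.
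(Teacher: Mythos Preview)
Your proof is correct and follows essentially the same approach as the paper. The only cosmetic difference is in part~\eqref{grouplikes:c}: the paper applies parts~\eqref{grouplikes:a} and~\eqref{grouplikes:b} directly to the $L$-algebra $L\ot_K A$ and then identifies $\Hom_L(L\ot_K A,L)\cong\Hom_K(A,L)$, whereas you transport the $K$-coalgebra structure of $A^*$ along $L\ot_K A^*\cong\Hom_K(A,L)$ and rerun the argument of~\eqref{grouplikes:b}; these are two views of the same base-change, and neither adds anything the other lacks.
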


\begin{proof}
\eqref{grouplikes:a} This well-known result follows by a direct computation.

\eqref{grouplikes:b} Using the left hand side of \eqref{multcomult}, one sees that $f$ is multiplicative exactly if and only if $\Delta(f)=f\ot f$. Applying the right hand side of \eqref{multcomult} on base elements $e_i$, one sees that the dual base elements $f_i$ are grouplike if and only if the base elements are orthogonal idempotents.

\eqref{grouplikes:c} If $A$ is a finite dimensional $K$-algebra, then by base extension we find that $L\ot_K A$ is a finite dimensional $L$-algebra, so applying the previous parts we find that the $L$-linear dual $\Hom_L(L\ot_ KA,L)\cong \Hom_K(A,L)$ is an $L$-coalgebra whose grouplikes are exactly the $L$-algebra morphisms in $\Hom_L(L\ot_ KA,L)$, which correspond to the $K$-algebra morphisms in $\Hom_K(A,L)$ using the last isomorphism.
\end{proof}

\begin{proposition}\label{subquot}
\begin{enumerate}[(a)]
\item \label{subquot:a}
If $C$ is a coalgebra with a base $B$ of grouplike elements, then any quotient coalgebra and any subcoalgebra of $C$ also has a base of grouplike elements (formed by subsets of $B$).
\item \label{subquot:b}
Any Hopf subalgebra of a group algebra $K[G]$ is again a group algebra $K[N]$ over a subgroup $N$ of $G$. Moreover $K[N]$ is a normal Hopf subalgebra of $K[G]$ if and only if $N$ is a normal subgroup of $G$. 
\item \label{subquot:c}
If $I=K[G]K[N]^+$ is the left ideal two-sided coideal of $K[G]$ associated to the Hopf subalgebra $K[N]$, then the quotient $K[G]$-module coalgebra $K[G]/I$ is isomorphic to $K[G/N]$ (the coalgebra with right cosets of $N$ as a base). In particular any quotient Hopf algebra of a group algebra is again a group algebra over a quotient group (this happens when when $K[N]$ is a normal Hopf subalgebra).
\item \label{subquot:d}
The kernel of the projection $K[G]\to K[G/N]$ is the linear span of elements of the form $gn-gn'$ where $g\in G$ and $n,n'\in N$.
\end{enumerate}
\label{grouplikesHopf}
\end{proposition}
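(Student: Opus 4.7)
The key structural result is part (a); once it is established, parts (b)--(d) follow by a straightforward translation between grouplike bases and subsets or quotients of $G$.

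For (a), my plan is to exploit two standard facts: (i) grouplikes in any coalgebra are linearly independent, and (ii) images of grouplikes under coalgebra maps are either grouplike or zero. A preliminary observation is that a basis $B$ of grouplikes of $C$ is necessarily the entire set $G(C)$ of grouplikes of $C$: writing a putative grouplike $g = \sum_{b\in B} \lambda_b b$ and matching coefficients in the two expressions for $\Delta(g) = g \ot g$ (using that $\{b\ot b'\}_{b,b'\in B}$ is linearly independent) forces exactly one $\lambda_b$ to be $1$ and the rest to vanish. For a subcoalgebra $D \subset C$ and an element $d = \sum_b \lambda_b b \in D$, I extract each basis element by applying $\pi_{b_0}^* \ot \id : C \ot C \to C$ to $\Delta(d) \in D \ot D$, where $\pi_{b_0}^*\in C^*$ is the coordinate functional dual to a fixed $b_0 \in B$; this produces $\lambda_{b_0} b_0 \in D$, hence $b_0 \in D$ whenever $\lambda_{b_0}\neq 0$. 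For a quotient $\pi : C \to C/I$, the distinct nonzero images of elements of $B$ form a spanning set of grouplikes in $C/I$, and by (i) applied in $C/I$ this set is automatically linearly independent, hence a basis.

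For (b), applying (a) to a Hopf subalgebra $H_0 \subset K[G]$ gives a basis of grouplikes, which is forced to be a subset $N \subset G$ since $G(K[G]) = G$. Closure under the algebra multiplication, the presence of the identity, and stability under the antipode $S(g)=g^{-1}$ then make $N$ a subgroup and $H_0 = K[N]$. For normality, I would write out $h_{(1)} x S(h_{(2)})$ for $h = \sum_g \lambda_g g$: since each $g$ is grouplike, this reduces to $\sum_g \lambda_g gxg^{-1}$, and by linearity in $x$ the condition collapses to $gng^{-1} \in K[N]$ for all $g \in G$, $n \in N$; since $gng^{-1}$ is itself a grouplike of $K[G]$, this is equivalent to $gng^{-1}\in N$. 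For (c) and (d), a direct computation gives $K[N]^+ = \Span\{n-1 : n \in N\}$, whence $I = K[G]K[N]^+ = \Span\{gn - g : g \in G,\ n \in N\}$, which is evidently the same as $\Span\{gn - gn' : g \in G,\ n,n' \in N\}$. The quotient $K[G]/I$ therefore identifies the elements of each left coset $gN$, so the obvious surjection $K[G] \to K[G/N]$, $g \mapsto gN$, has kernel exactly $I$ and induces a $K$-linear isomorphism $K[G]/I \cong K[G/N]$; it respects the coalgebra and left $K[G]$-module structures since both sides have the cosets as a grouplike basis and both $K[G]$-actions are given by left multiplication on cosets. When $N$ is normal, $G/N$ is a group and this upgrades to a Hopf algebra isomorphism. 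The only place where one needs to be slightly careful is the subcoalgebra case of (a), but the coordinate-functional trick there is standard, so I anticipate no real obstacle.
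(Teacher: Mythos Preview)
Your proposal is correct. The quotient case of (a) and parts (b)--(d) match the paper's argument essentially line for line.

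The one genuine difference is the subcoalgebra case of (a). The paper argues by duality: since $C$ has a grouplike basis, $C^*$ is a product of copies of~$K$ (Lemma~\ref{grouplikes}); the surjection $i^*:C^*\to D^*$ then forces $D^*$ to be a product of copies of~$K$ as well, so $D$ has a grouplike basis, and the inclusion $i$ identifies it with a subset of~$B$. Your argument instead stays on the coalgebra side: for $d=\sum_b\lambda_b b\in D$, applying the slice map $\pi_{b_0}^*\ot\id$ to $\Delta(d)\in D\ot D$ yields $\lambda_{b_0}b_0\in D$, so each $b_0$ occurring in $d$ already lies in~$D$. Both are short, but they buy slightly different things: the paper's duality argument invokes Lemma~\ref{grouplikes}, which is stated for finite-dimensional spaces (harmless here, since all applications in the paper are finite-dimensional); your coordinate-functional trick is self-contained and works verbatim without any dimension hypothesis.
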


\begin{proof}
\eqref{subquot:a}
If $C$ has a base $B$ of grouplike elements and $f:C\to D$ is a surjective coalgebra morphism, then $f(B)$ is a generating set of grouplike elements. Since grouplike elements are always linearly independent, this gives a base of grouplike elements for~$D$.

If $i:D\to C$ is an injective coalgebra morphism, then $i^*:C^*\to D^*$ is a surjective algebra morphism. From Lemma~\ref{grouplikes} we know that $C^*$ is a product of copies of the base field. Consequently, $D^*$ is also a product of (a smaller number of) copies of the base field, and therefore $D$ has a base of grouplike elements. The inclusion map $i$ sends grouplike elements of $D$ to grouplike elements of $C$, so the base of grouplikes of $D$ can be considered as a subset of the base of grouplikes of~$C$.

\eqref{subquot:b}
This follows from part \eqref{subquot:a}, using the well-known and easily checked fact that the set of grouplike elements in a Hopf algebra forms a group.

\eqref{subquot:c}-\eqref{subquot:d}
First remark that $I$ is generated as a $K$-linear space by elements of the form $gn-gn'$ where $g\in G$ and $n,n'\in N$.
By part \eqref{subquot:a}, we know that $K[G]/I$ has a base of grouplike elements, which can be obtained by taking the images of the elements of $G$ under the projection $\pi:K[G]\to K[G]/I$. Suppose that $g,g'\in G$ are such that $\pi(g)=\pi(g')$ in $K[G]/I$.
By the above characterizations of elements in $I$ and the linear independence of the grouplike elements in $K[G]$, this is equivalent with the existence of elements $h\in G$ and $n,n'\in N$ such that $g=hn$ and $g'=hn'$. Otherwise said, $g^{-1}g'=n^{-1}n\in N$ or $g'\in gN$. Hence we conclude that $G/N$ is indeed a base for $K[G]/I$. The other results follow directly from this.
\end{proof}

\subsection{Basics of Hopf-Galois extensions}\selabel{basics}

We start this section by explicitly specialising Definition~\ref{definfiniteHG} to the finite dimensional case.

\begin{defi}\label{defiHGal}
Let $L/K$ be a finite field extension and let $H$ be a finite dimensional $K$-bialgebra.
We say that $L/K$ is a  {\em Hopf-Galois extension} for $H$ (or simply {\em $H$-Galois}) if $L$ is a left $H$-module algebra and the $K$-linear map
\begin{equation}\label{can}
\xymatrix{\can : L \ot H \ar[r] & \End_K(L),\  \can(x \o h)(y)=x(h\cdot y)}
\end{equation}
where $x,y\in L$ and $h\in H$, is bijective. The map $\can$ is called the {\em Galois} map or the {\em canonical} map.
\end{defi}

We first prove that $H$ as in Definition~\ref{defiHGal} is a cocommutative $K$-Hopf algebra. Moreover, by faithfully flat descent, it follows that the space of $H$-invariants of $L$ is just $K$, which means that $L$ is even a {\em Galois object} instead of just a Galois extension.

\begin{lem}\label{lem:cocommutativeHopf}
Let $H$ be a $K$-bialgebra.
If $L/K$ is a finite extension that is $H$-Galois, then $H$ is a cocommutative Hopf $K$-algebra.
Moreover, the $H$-invariants of $L$ are just $K$, that is $K=L^H=\{x\in L~|~h\cdot x=\epsilon(h)x, \forall h\in H\}$.
\end{lem}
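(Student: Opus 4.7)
The plan is to prove cocommutativity of $H$, the existence of an antipode, and the identity $L^H=K$, by translating the $H$-action picture to the dual $H^*$-coaction picture, which is possible here since $H$ is finite-dimensional.

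Since $\dim_K H<\infty$, the dual $H^*$ carries a natural $K$-bialgebra structure and the left $H$-module algebra structure on $L$ corresponds to a right $H^*$-comodule algebra structure $\rho:L\to L\ot H^*$, $\rho(x)=x_{(0)}\ot x_{(1)}$. Under this duality, the bijectivity of $\can$ is equivalent to the bijectivity of the coaction canonical map
$$\beta:L\ot L\longrightarrow L\ot H^*,\qquad \beta(x\ot y)=xy_{(0)}\ot y_{(1)}.$$
A short Sweedler-notation check, combining the comodule algebra identity $(yy')_{(0)}\ot(yy')_{(1)}=y_{(0)}y'_{(0)}\ot y_{(1)}y'_{(1)}$ with the commutativity of $L$, shows that $\beta$ is a morphism of $K$-algebras, where both sides carry their natural tensor product algebra structures. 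Since $L$ is a commutative field, $L\ot L$ is commutative; the isomorphism $\beta$ then transports this to $L\ot H^*$, and restricting to $1\ot H^*$ yields commutativity of $H^*$, that is, cocommutativity of $H$.

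For the antipode, $L/K$ is faithfully flat as a field extension. Combined with the injectivity of $\beta$, this forces $L^{\text{co}\,H^*}=K$: if $\rho(x)=x\ot 1$, then $\beta(1\ot x)=x\ot 1=\beta(x\ot 1)$ implies $1\ot x=x\ot 1$ in $L\ot_K L$, hence $x\in K$ by faithful flatness. At this point one invokes the classical Hopf-Galois theorem (see e.g.\ \cite{Schneider90}) stating that a bialgebra admitting a faithfully flat comodule algebra extension with trivial coinvariants and bijective canonical map must be a Hopf algebra; this equips $H^*$ with an antipode, and dualizing endows $H$ with one. Finally, $L^H=K$ is immediate because $L^H=L^{\text{co}\,H^*}$, the two invariance conditions being manifestly equivalent; alternatively, applying the faithfully flat descent of Proposition~\ref{pr:descent} to $L$ viewed as a left $L\# H$-module and counting $K$-dimensions in $L\cong L\ot_K L^H$ forces $\dim_K L^H=1$, hence $L^H=K$.

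The main obstacle is the antipode step, which rests on the nontrivial classical result promoting a bialgebra with a faithfully flat Hopf-Galois comodule algebra extension to a Hopf algebra; the cocommutativity and fixed-field statements are elementary once the correct algebra structure on $L\ot H^*$ has been identified and Proposition~\ref{pr:descent} is invoked.
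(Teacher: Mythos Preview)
Your proof is correct and proceeds by the dual route to the paper's. The paper stays on the module side: it builds an isomorphism $\alpha:L\ot H\ot H\to\Hom_K(L\ot L,L)$, uses commutativity of $L$ to deduce $\alpha(x\ot h_{(1)}\ot h_{(2)})=\alpha(x\ot h_{(2)}\ot h_{(1)})$, and reads off cocommutativity of~$H$ directly; for the antipode it constructs a second isomorphism $\beta$ and extracts $S$ explicitly from $\can'^{-1}$. You instead pass to the comodule picture over $H^*$, observe that $\beta:L\ot L\to L\ot H^*$ is an algebra isomorphism, and transport commutativity across it. Both arguments are really the same computation viewed through the duality the paper spells out at the end of \seref{basics}; your version has the advantage of packaging cocommutativity as a one-line consequence of $\beta$ being multiplicative, while the paper's version avoids the dualisation step and gives an explicit formula for~$S$.

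Two small points. First, the reference you want for promoting the bialgebra to a Hopf algebra is \cite{Peter97} (Schauenburg), not \cite{Schneider90}; the paper cites this and then sketches the explicit antipode construction. Second, your direct computation of $L^{{\rm co}\,H^*}=K$ from the injectivity of $\beta$ is in fact cleaner than invoking \prref{descent} at the end: the descent equivalence, as usually proved, already uses the antipode, so citing it here risks circularity unless one is careful about the order of the argument (the paper avoids this by establishing the Hopf structure first and only then appealing to \prref{descent}). Your injectivity argument needs no such care.
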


\begin{proof}
Using the fact that $L/K$ is $H$-Galois in the first and third isomorphism, the fact that $L/K$ is finite
in the second isomorphism and the Hom-tensor relations in the last isomorphism we obtain a natural isomorphism
\begin{eqnarray*}
L \ot H \ot H  \cong& \Hom_K(L,L) \ot H &\cong \Hom_K(L, L \ot H)\\
 \cong &\Hom_K(L,\Hom_K(L,L)) &\cong \Hom_K(L \ot L, L)
\end{eqnarray*}
The composed isomorphism $\alpha:L \ot H \ot H \to \Hom_K(L \ot L, L)$ is given explicitly by ${\alpha(x \ot h \ot h')(y \ot z)}=x (h\cdot y) (h'\cdot z)$.

By the commutativity of $L$, it is clear that for all $x,y,z\in L$ and all $h\in H$,
\begin{eqnarray*}
x(h_{(1)}\cdot y)(h_{(2)}\cdot z)&=& x h\cdot (yz) = x h\cdot (zy)\\
&=& x(h_{(1)}\cdot z)(h_{(2)}\cdot y)= x(h_{(2)}\cdot y)(h_{(1)}\cdot z)
\end{eqnarray*}
This means that $\alpha(x\ot h_{(1)}\ot h_{(2)})=\alpha(x\ot h_{(2)}\ot h_{(1)})$ and since $\alpha$ is an isomorphism we also have that $x\ot h_{(1)}\ot h_{(2)})=x\ot h_{(2)}\ot h_{(1)}$. Since $K$ is a field,
it follows that $h_{(1)}\ot h_{(2)}=h_{(2)}\ot h_{(1)}\in H\ot H$, hence $H$ is cocommutative.

The fact that $H$ is a Hopf algebra was proven in \cite{Peter97} (there with $H$-comodule algebras rather than $H$-module algebras). A brief argument goes as follows. Consider the map
\begin{eqnarray*}
\beta:&& L \ot H \ot H \to \Hom_K(L \ot L, L),\\ &&\beta(x\ot h\ot h')(y\ot z)=x(h\cdot (y(h'\cdot z))=x(h_{(1)}\cdot y)(h_{(2)}h'\cdot z).
\end{eqnarray*}
By a similar reasoning as for the map $\alpha$ above, one can see that $\beta$, being a composition of natural isomorphisms, is itself an isomorphism. Moreover, one easily observes that $\beta=\alpha\circ (id_L\ot \can')$ where $\can':H\ot H\to H\ot H$ is given by $\can'(h\ot h')=h_{(1)}\ot h_{(2)}h'$. Since both $\alpha$ and $\beta$ are isomorphisms, $\can'$ is an isomorphism as well. Then one can verify that $S:H\to H$ given by $S(h)=(\epsilon\ot id_H)\circ \can'^{-1}(h\ot 1)$ is an antipode for $H$.
%
%

For the last statement, remark that $L\cong L\ot K$ is a left $L\# H$-module by means of the usual actions of $L$ and $H$ on $L$. Hence, by \prref{descent} we immediately obtain that $L^H$ and $K$ coincide as subsets of $L$.
\end{proof}

Let now $L$ be any $H$-module algebra over~$K$ (not necessarily $H$-Galois) and consider any field extension $L\subset \tilde L$ and the action of $\tilde L\ot H$ on $\Hom_K(L,\tilde L)$ defined as follows: For $y\otimes h \in \tilde L\ot H$ and $f \in \Hom_K(L,\tilde L)$, define $f.(y \otimes h)$ as the map sending $x \in L$ to $y  f(h \cdot x)$. This turns $\Hom_K(L,\tilde L)$ into a right $\tilde L\ot H$-module. Combining this action with the $\tilde L$-coalgebra structure on $\Hom_K(L,\tilde L)$ as described in Lemma \ref{grouplikes}\eqref{grouplikes:c}, one arrives at the following result.

\begin{lemma}\label{cancoalgmap}
With structure as defined above, $\Hom_K(L,\tilde L)$ is a right $\tilde L\ot H$-module $\tilde L$-coalgebra and the canonical map $\widetilde\can: \tilde{L} \otimes_K H \to \Hom_K(L,\tilde{L})$ is a morphism of right $\tilde L\ot H$-module $\tilde L$-coalgebras.
\end{lemma}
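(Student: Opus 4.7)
The plan is to verify the required compatibilities by direct computation, relying on three ingredients: the explicit description of the $\tilde L$-coalgebra structure on $\Hom_K(L,\tilde L)$ from Lemma~\ref{grouplikes}\eqref{grouplikes:c}, namely $f(xx')=f_{(1)}(x)f_{(2)}(x')$ and $\epsilon(f)=f(1_L)$; the left $H$-module algebra axioms for $L$, namely $h\cdot(xx')=(h_{(1)}\cdot x)(h_{(2)}\cdot x')$ and $h\cdot 1_L=\epsilon_H(h)1_L$; and the commutativity of $\tilde L$. I view $\tilde L\ot H$ as an $\tilde L$-coalgebra by extension of scalars from $H$, with $\Delta(y\ot h)=(y\ot h_{(1)})\ot_{\tilde L}(1\ot h_{(2)})$ and $\epsilon(y\ot h)=y\epsilon_H(h)$.

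First I would verify that the formula $(f.(y\ot h))(x)=yf(h\cdot x)$ defines a right $\tilde L\ot H$-module structure on $\Hom_K(L,\tilde L)$; after expansion, the associativity identity $f.((y_1\ot h_1)(y_2\ot h_2))=(f.(y_1\ot h_1)).(y_2\ot h_2)$ reduces to $y_1y_2=y_2y_1$ in $\tilde L$, i.e.\ to commutativity.

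Next I would show that the comultiplication and counit of $\Hom_K(L,\tilde L)$ are right $\tilde L\ot H$-linear. Evaluating $\Delta(f.(y\ot h))$ at $x\ot x'\in L\ot L$ yields $yf(h\cdot(xx'))$; inserting the module algebra axiom and then the defining formula for $\Delta(f)$ rewrites this as $yf_{(1)}(h_{(1)}\cdot x)\,f_{(2)}(h_{(2)}\cdot x')$, which is precisely the evaluation of $\bigl(f_{(1)}.(y\ot h_{(1)})\bigr)\ot_{\tilde L}\bigl(f_{(2)}.(1\ot h_{(2)})\bigr)$ at $x\ot x'$. The analogous compatibility for the counit collapses to $h\cdot 1_L=\epsilon_H(h)1_L$.

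For $\widetilde\can$, right $\tilde L\ot H$-linearity $\widetilde\can((y_1\ot h_1)(y_2\ot h_2))=\widetilde\can(y_1\ot h_1).(y_2\ot h_2)$ evaluated at $x\in L$ reduces to commutativity of $\tilde L$ combined with associativity of the $H$-action on $L$. The coalgebra identities $\Delta\circ\widetilde\can=(\widetilde\can\ot_{\tilde L}\widetilde\can)\circ(\mathrm{id}_{\tilde L}\ot\Delta_H)$ and $\epsilon\circ\widetilde\can=\mathrm{id}_{\tilde L}\ot\epsilon_H$ follow once again from the two module algebra axioms, paired against the duality of Lemma~\ref{grouplikes}. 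No real obstacle arises; the whole statement is essentially a bookkeeping exercise, the only subtlety being to track the placement of $\tilde L$-scalars across the tensor products over $\tilde L$ (for example, writing $\Delta(y\ot h)=(y\ot h_{(1)})\ot_{\tilde L}(1\ot h_{(2)})$ rather than splitting $y$ symmetrically).
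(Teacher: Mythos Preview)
Your proposal is correct and follows essentially the same route as the paper: direct verification using the module algebra axioms for $L$ together with the explicit dual coalgebra structure on $\Hom_K(L,\tilde L)$ from Lemma~\ref{grouplikes}. The only difference is a small shortcut: the paper observes that $\widetilde\can(y\ot h)=\id_L.(y\ot h)$, so the right $\tilde L\ot H$-linearity of $\widetilde\can$ is immediate from associativity of the action, whereas you unfold it by hand; conversely, you are more thorough in checking the module--coalgebra compatibility for $\Hom_K(L,\tilde L)$ and the counit identities, which the paper leaves implicit.
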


\begin{proof}
Remark that the $\tilde L\ot H$-action on $\Hom_K(L,\tilde L)$ is exactly such that
$$\widetilde\can(y\ot h)=\id_L.(y\ot h).$$
From this observation it is clear that $\widetilde\can$ is already $\tilde L\ot H$-linear.
We check that for all $y\ot h\in \tilde L\ot H$ and $x,x'\in L$ :
\begin{eqnarray*}
(\widetilde\can(y\ot h_{(1)})\ot \widetilde\can(1\ot h_{(2)}))(x\ot x')&=&(\widetilde\can(y\ot h_{(1)})(x)\widetilde\can(1\ot h_{(2)}))(x')\\
&=&y (h_{(1)} \cdot x)(h_{(2)} \cdot x')\\ &=& y (h\cdot (xx')) =\widetilde\can(y\ot h)(xx')
\end{eqnarray*}
Hence $\Delta(\widetilde\can(y\ot h))=\widetilde\can(y\ot h_{(1)})\ot \widetilde\can(1\ot h_{(2)})$, so $\widetilde\can$ is indeed a coalgebra morphism.
\end{proof}

Notice that the $\tilde L$-module coalgebra structure on $\Hom_K(L,\tilde L$) is exactly dual to the $\tilde L\ot H$-module algebra structure on $\tilde L\ot L$. Via the action, we obtain the $\tilde{L}$-linear homomorphism
\[\tilde{L} \otimes_K H \to \End_{\tilde{L}}\big(\Hom_K(L,\tilde{L}) \big), \;\;\; y \; h \mapsto (f \mapsto (x \mapsto y  f(h \cdot x))).\]
If we compose it with the evaluation at $\id_L: L \to \tilde{L}$, we exactly recover the canonical map $\widetilde\can: \tilde{L} \otimes_K H \to \Hom_K(L,\tilde{L})$.

Let us briefly recall the dual point of view to Hopf-Galois theory, which is in fact more often used in literature.
If $H$ is a finite dimensional Hopf $K$-algebra, then the dual space $H^*=\Hom_K(H,K)$ is again a finite dimensional Hopf algebra, whose multiplication and comultiplication are obtained by dualizing those of $H$, as in \eqref{multcomult}. Similarly, $L$ is a left $H$-module algebra by an action $H\ot L\to L, h\ot x\mapsto h\cdot x$ if and only if $L$ is a right $H^*$-comodule algebra by the coaction
\begin{equation}\label{coaction}
\rho: L\to L\ot H^*,\ \rho(x)=x_{[0]}\ot x_{[1]}= \sum_i (e_i\cdot x)\ot f_i,
\end{equation}
where $\{(e_i,f_i)\}$ is a finite dual base for $H$. 
The (right) $L$-dual of the Galois map  leads to a second canonical map
\begin{equation}\label{can*} 
\can^*:L\ot L\to L\ot H^*,\ \can(x\ot y)=xy_{[0]}\ot y_{[1]}= \sum_i x (e_i\cdot y)\ot f_i.
\end{equation}
The map $\can$ from \eqref{can} is an isomorphism (i.e. $L/K$ is $H$-Galois) if and only if $\can^*$ is an isomorphism. Moreover 
the map $\can^*$ can be checked to be an algebra morphism (thanks to the commutativity of $L$), which the dual statement of Lemma~\ref{cancoalgmap}. 
Remark that since $H$ is cocommutative, $H^*$ is a commutative Hopf algebra.

\subsection{The correspondence map $\Phi$}

In this subsection we study $J(L_0)$ for an intermediate extension~$L_0$.

\begin{lem}\label{lem:surj}
Let $L/K$ be $H$-Galois and $L_0$ be an intermediate field. Then the map
$$\can_0:L \otimes H/J(L_0) \to \Hom_K(L_0,L),\ \can_0(x\ot \ol h)(y)=x(h\cdot y),$$ induced by the Galois map, is surjective. If $L_0$ is $H$-stable, then $\can_0$ induces a well-defined map $\can_0':L_0 \otimes H/J(L_0) \to \End_K(L_0)$, which is also surjective.
\end{lem}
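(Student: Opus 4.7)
The plan is to exploit the surjectivity of the Galois map $\can$ together with the restriction map $r\colon \End_K(L)\to \Hom_K(L_0,L)$ sending $f\mapsto f|_{L_0}$. The restriction $r$ is surjective because $L_0$ is a direct $K$-summand of $L$, so any $K$-linear map $L_0\to L$ extends to an endomorphism of $L$. The composition $r\circ \can$ sends $x\ot h\in L\ot H$ to the map $y\mapsto x(h\cdot y)$ on $L_0$; by definition of $J(L_0)$ this composition vanishes on $L\ot J(L_0)$, so it factors through the canonical quotient $L\ot H/J(L_0)$, and the factored map is precisely $\can_0$. Bijectivity of $\can$ together with surjectivity of $r$ therefore imply that $\can_0$ is surjective.

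For the second claim, well-definedness of $\can_0'$ is immediate from $H$-stability: for $x,y\in L_0$ and $h\in H$, one has $h\cdot y\in L_0$ and hence $x(h\cdot y)\in L_0$, so $\can_0$ restricted to $L_0\ot H/J(L_0)$ takes values in $\End_K(L_0)\subset \Hom_K(L_0,L)$. For the surjectivity of $\can_0'$, I would choose an $L_0$-module decomposition $L=L_0\oplus W$, which is available because $L/L_0$ is a finite field extension and hence $L$ is free as an $L_0$-module. The associated projection $\pi\colon L\to L_0$ is then not merely $K$-linear but $L_0$-linear. Given any $\phi\in\End_K(L_0)$, extend it to $\hat\phi:=\phi\circ\pi\in\End_K(L)$ and, by bijectivity of $\can$, write $\hat\phi=\can\bigl(\sum_i\tilde x_i\ot h_i\bigr)$ with $\tilde x_i\in L$. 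Evaluating at $y\in L_0$ yields $\phi(y)=\sum_i\tilde x_i(h_i\cdot y)\in L_0$; applying $\pi$ and using that $h_i\cdot y\in L_0$ (by $H$-stability) together with the $L_0$-linearity of $\pi$ gives $\phi(y)=\sum_i\pi(\tilde x_i)(h_i\cdot y)$, so $\phi=\can_0'\bigl(\sum_i x_i\ot\bar h_i\bigr)$ for $x_i:=\pi(\tilde x_i)\in L_0$.

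The crux of the argument, and the main technical point, is in this last step: one must replace the lifts $\tilde x_i\in L$ produced by the Galois map with elements of $L_0$. This works precisely because the complement $W$ can be chosen to be an $L_0$-submodule (using freeness of $L$ over $L_0$), so that $\pi$ commutes with multiplication by elements of $L_0$. A generic $K$-complement would not be $L_0$-stable, and the projection trick would collapse.
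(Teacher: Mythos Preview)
Your proof is correct. The first part coincides with the paper's argument: both factor the surjection $\End_K(L)\twoheadrightarrow \Hom_K(L_0,L)$ through $\can$ and the quotient map $L\ot H\to L\ot H/J(L_0)$.

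For the second part the paper takes a more abstract route: it observes that after applying $L\ot_{L_0}(-)$ to $\can_0'$ one recovers the surjection $\can_0:L\ot H/J(L_0)\twoheadrightarrow \Hom_K(L_0,L)$ (using $L\ot_{L_0}\End_K(L_0)\cong\Hom_K(L_0,L)$), and then invokes faithful flatness of $L$ over $L_0$ to conclude that $\can_0'$ itself was surjective. Your argument is an explicit unpacking of this: the $L_0$-linear projection $\pi$ you construct is precisely a witness of the freeness of $L$ over $L_0$, and your ``lift via $\can$, then push down by $\pi$'' manoeuvre is the concrete content of the descent step. Your version is more hands-on and self-contained; the paper's version is shorter and signals that only the abstract property (faithful flatness) matters, which is relevant when one later wants to generalise beyond field extensions.
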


\begin{proof}
The first statement directly follows from the following commutative diagram, where the vertical arrows are the obvious surjections,
\[\xymatrix{L \ot H \ar[r]^-{\can}_-\sim\ar@{->>}[d]& \End_K(L)\ar@{->>}[d]\\ L \ot H/J(L_0)\ar[r]^{\can_0} & \Hom_K(L_0, L).}\]
If $L_0$ is $H$-stable, then the map $L_0 \otimes H/J(L_0) \to \End_K(L_0)$ is clearly well-defined. To see that it is surjective, consider first the following surjective map:
\[ L \ot_{L_0} L_0 \ot H/J(L_0) \cong L \ot H/J(L_0) \twoheadrightarrow \Hom_K(L_0, L) \cong L \ot_{L_0} \End_K(L_0).\]
As $L_0 \to L$ is faithfully flat, the surjectivity of $L_0 \ot H/J(L_0) \to \End_K(L_0)$ follows.
\end{proof}

We now formulate some equivalent characterizations of $H$-subextensions.

\begin{lem}\label{lem:Hsubext}
Let $L/K$ be $H$-Galois and $L_0$ be an intermediate field. Denote $\alpha_0:H\to \Hom_K(L_0,L), \alpha_0(h)(x)=h.x$ for any $x\in L_0$ and $h\in H$.
Then the following statements are equivalent:
\begin{enumerate}[(i)]
\item \label{lem:Hsubext:i} For any subset $F \subset H$, $K$-linear independence inside $\Hom_K(L_0,L)$ implies $L$-linear independence.
\item \label{lem:Hsubext:ii} There are elements $h_1,\dots,h_m \in H$ whose images under $\alpha_0$ are $L$-linearly independent and generate $\alpha_0(H)$ over $K$.
\item \label{lem:Hsubext:iii} The natural map $\can_0 : L \otimes H/J(L_0) \to \Hom_K(L_0,L)$ is injective.
\item \label{lem:Hsubext:iv} $L_0$ is an $H$-subextension of $L/K$.
\end{enumerate}
\end{lem}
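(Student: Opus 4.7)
The essential observation, which links all four conditions, is that by the very definition $J(L_0) = \ker \alpha_0$, so the map $\alpha_0:H\to\Hom_K(L_0,L)$ factors as the composition of the quotient $H\twoheadrightarrow H/J(L_0)$ with a $K$-linear injection $\bar\alpha_0:H/J(L_0)\hookrightarrow\Hom_K(L_0,L)$. Consequently, a subset $F\subset H$ maps to $K$-linearly independent elements in $\Hom_K(L_0,L)$ if and only if the images $\overline{F}\subset H/J(L_0)$ are $K$-linearly independent. Note also that $\can_0$ is just $L\otimes\bar\alpha_0$ followed by the action, and coincides with $\bar\alpha_0$ on $1\otimes H/J(L_0)$.

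The equivalence \eqref{lem:Hsubext:iii}$\Leftrightarrow$\eqref{lem:Hsubext:iv} is immediate: in the finite-dimensional setting of this section, being an $H$-subextension means $\can_0$ is an isomorphism (topologies are discrete), and \leref{surj} shows $\can_0$ is already surjective, so injectivity suffices.

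For \eqref{lem:Hsubext:i}$\Rightarrow$\eqref{lem:Hsubext:iii}, I would take $\xi = \sum x_i \otimes \overline{h_i}\in\ker\can_0$; after reducing to the case where the $\overline{h_i}$ are $K$-linearly independent in $H/J(L_0)$, the injectivity of $\bar\alpha_0$ makes the $\alpha_0(h_i)$ $K$-linearly independent in $\Hom_K(L_0,L)$, and hypothesis \eqref{lem:Hsubext:i} then forces them to be $L$-linearly independent, giving $x_i=0$. Conversely, for \eqref{lem:Hsubext:iii}$\Rightarrow$\eqref{lem:Hsubext:i}, a set $F\subset H$ whose image is $K$-linearly independent in $\Hom_K(L_0,L)$ has $\overline{F}$ $K$-linearly independent in $H/J(L_0)$, so $\{1\otimes\overline f\mid f\in F\}$ is $L$-linearly independent in $L\otimes H/J(L_0)$; applying the injective map $\can_0$ yields $L$-linear independence of $\alpha_0(F)$.

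Finally, \eqref{lem:Hsubext:ii}$\Leftrightarrow$\eqref{lem:Hsubext:iii} is a finite-dimensional restatement. Given \eqref{lem:Hsubext:iii}, any lift to $H$ of a $K$-basis of the (finite-dimensional) subspace $H/J(L_0)$ satisfies \eqref{lem:Hsubext:ii}, since $\can_0$ is an $L$-linear isomorphism. Conversely, given $h_1,\dots,h_m$ as in \eqref{lem:Hsubext:ii}, the images $\overline{h_1},\dots,\overline{h_m}$ generate $H/J(L_0)$ over $K$ (because $\bar\alpha_0$ is injective and the $\alpha_0(h_i)$ generate $\alpha_0(H)$) and are $K$-linearly independent (since they are even $L$-linearly independent in the image), hence form a $K$-basis of $H/J(L_0)$; then $\can_0$ sends the $L$-basis $\{1\otimes\overline{h_i}\}$ to the $L$-linearly independent family $\{\alpha_0(h_i)\}$, proving injectivity. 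No single step is a real obstacle; the only mild care needed is to keep track that the quotient by $J(L_0)$ exactly removes the ambiguity in passing between $H$ and its image under $\alpha_0$.
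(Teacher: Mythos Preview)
Your proposal is correct and follows essentially the same approach as the paper: both rely on the epi-mono factorisation of $\alpha_0$ through $H/J(L_0)$ and on the surjectivity of $\can_0$ from \leref{surj}. The only cosmetic difference is that the paper organises the argument as a cycle \eqref{lem:Hsubext:i}$\Rightarrow$\eqref{lem:Hsubext:ii}$\Rightarrow$\eqref{lem:Hsubext:iii}$\Rightarrow$\eqref{lem:Hsubext:iv}$\Rightarrow$\eqref{lem:Hsubext:i}, whereas you prove the equivalences pairwise around \eqref{lem:Hsubext:iii}; the underlying linear-algebra steps are the same.
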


\begin{proof}
Suppose \eqref{lem:Hsubext:i}. Let $B$ be a $K$-basis of $H$, then there exists a subset $B_0 \subset B$ such that $\alpha_0(B_0)$ is a $K$-basis of $\alpha_0(H)$. By assuption, $\alpha_0(B_0)$ is also $L$-linearly independent in $\Hom_K(L_0, L)$.\\
Suppose \eqref{lem:Hsubext:ii}. By definition, $J(L_0)=\ker(\alpha_0)$, hence, $\alpha_0(H)\cong H/J(L_0)$. Therefore, any element $u$ in $L \otimes H/J(L_0)$ can be written in the form $u=\sum_{i=1}^m x_i\ot \alpha_0(h_i)$ for some $x_i\in L$. Since the elements $\alpha_0(h_i)$ are $L$-linearly independent, if $\can_0(u)=\sum_{i=1}^m x_i\alpha_0(h_i)=0$, then all $x_i=0$ and therefore $u=0$. Hence $\can_0$ is injective. \\
Suppose \eqref{lem:Hsubext:iii}. If $\can_0$ is injective, it is also bijective by Lemma~\ref{lem:surj}).\\
Suppose \eqref{lem:Hsubext:iv}. 
Let $F \subset H$ such that $\alpha_0(F)$ is $K$-linearly independent. Since $\alpha_0$ has an epi-mono factorization $\alpha:\xymatrix{H \ar@{->>}[r]^-\pi & H/J(H_0)  \ar@{^(->}[r]^-{\alpha'_0}& \Hom_K(L_0,L)}$, we find that $\pi(F)$ is also $K$-linearly independent. Now suppose that $\sum x_i\alpha_0(f_i)=0$ for some $x_i\in L$ and $f_i\in F$. Since $\sum x_i\alpha_0(f_i)=\can_0(x_i\ot \pi(f_i))$ and $\can_0$ is injective, we find that $x_i\ot \pi(f_i)=0$ and hence all $x_i=0$ as the elements $\pi(f_i)$ are $K$-linearly independent. It follows that $\alpha_0(F)$ is $L$-linearly independent as needed.
\end{proof}

\begin{proposition}\label{prop:HopfIdeal}
Let $L/K$ be $H$-Galois and $L_0$ be an intermediate extension.
\begin{enumerate}[(a)]
\item \label{prop:HopfIdeal:aa} $J(L_0)$ is a left ideal and $\epsilon(J(L_0))=0$.
\item \label{prop:HopfIdeal:a} If $L_0$ is an $H$-subextension, then $J(L_0)$ is a two-sided coideal.
\item \label{prop:HopfIdeal:b} If $L_0$ is $H$-stable, then $J(L_0)$ is a right ideal.
\item \label{prop:HopfIdeal:c} If $L_0$ is $H$-normal, then $L_0/K$ is $H/J(L_0)$-Galois and $J(L_0)$ is a Hopf ideal.
\end{enumerate}
Consequently, by defining $\Phi(L_0)=J(L_0)$, we obtain the map needed in Theorem~\ref{theo:fin-cor}.
\end{proposition}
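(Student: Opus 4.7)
The plan is to prove the four parts sequentially: (a) and (c) are direct computations, (b) is the technical core, and (d) assembles the earlier parts. For (a), for any $h\in J(L_0)$ and $k\in H$ one has $(kh)\cdot x=k\cdot(h\cdot x)=0$ for every $x\in L_0$, so $J(L_0)$ is a left ideal; moreover, since $1_L=1_{L_0}\in L_0$, the module-algebra axiom $h\cdot 1_L=\epsilon(h)1_L$ forces $\epsilon(h)=0$ whenever $h\in J(L_0)$. Part (c) is equally immediate: $H$-stability of $L_0$ gives $k\cdot x\in L_0$ for $k\in H$ and $x\in L_0$, and then $(hk)\cdot x=h\cdot(k\cdot x)=0$ whenever $h\in J(L_0)$.

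The heart of the proof is (b). I would equip $\Hom_K(L_0,L)$ with the $L$-coalgebra structure of Lemma~\ref{grouplikes}\eqref{grouplikes:c}, dual to the $L$-algebra $L\otimes L_0$, and factor the natural map $\widetilde{\can}:L\otimes H\to\Hom_K(L_0,L)$, $\widetilde{\can}(x\otimes h)(y)=x(h\cdot y)$, as
\[L\otimes H\xrightarrow{\;\can\;}\End_K(L)\xrightarrow{\;\mathrm{res}\;}\Hom_K(L_0,L),\]
where $\mathrm{res}(f)=f|_{L_0}$. Here $\can$ is already an $L$-coalgebra morphism by Lemma~\ref{cancoalgmap} (applied to $\tilde L=L$), and $\mathrm{res}$ is one as well: for $f\in\End_K(L)$ and $x,y\in L_0$, both $\Delta(\mathrm{res}(f))(x\otimes y)$ and $(\mathrm{res}\otimes\mathrm{res})(\Delta f)(x\otimes y)$ equal $f(xy)$, and the counit is preserved since $1_L\in L_0$. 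Thus $\widetilde{\can}$ is a coalgebra morphism, and $\ker\widetilde{\can}$ is a two-sided coideal in the $L$-coalgebra $L\otimes H$. The $H$-subextension hypothesis makes $\can_0$ injective, which identifies $\ker\widetilde{\can}$ precisely with $L\otimes J(L_0)$; faithful flatness of $K\to L$ then descends this to $\Delta(J(L_0))\subset H\otimes J(L_0)+J(L_0)\otimes H$, and combined with $\epsilon(J(L_0))=0$ from (a) this yields (b).

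For (d), parts (a)--(c) show that $J(L_0)$ is a two-sided ideal and two-sided coideal with $\epsilon(J(L_0))=0$, so $H/J(L_0)$ inherits a bialgebra structure, and $H$-stability lets the $H$-action descend to make $L_0$ an $H/J(L_0)$-module algebra. The induced map $\can_0':L_0\otimes H/J(L_0)\to\End_K(L_0)$ is surjective by Lemma~\ref{lem:surj}, and injective because its base change to $L$ factors as $L_0\otimes H/J(L_0)\hookrightarrow L\otimes H/J(L_0)\xrightarrow{\can_0}\Hom_K(L_0,L)$, a composition of two injections (flatness of $L/L_0$ and the $H$-subextension hypothesis). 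Hence $L_0/K$ is $H/J(L_0)$-Galois, and Lemma~\ref{lem:cocommutativeHopf} then yields that $H/J(L_0)$ is a cocommutative Hopf algebra, which forces $J(L_0)$ to be antipode-stable (by cocommutativity, as recalled in Section~\ref{preliminariesHopf} via \cite{Nichols}) and hence a Hopf ideal. The main obstacle I anticipate is (b), specifically setting up $\widetilde{\can}$ as a coalgebra morphism and then using the $H$-subextension hypothesis to pin down its kernel as exactly $L\otimes J(L_0)$.
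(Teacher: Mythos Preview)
Your proof is correct, and parts (a), (c), and (d) follow the paper's argument essentially verbatim. For part (b), however, you take a genuinely different route. The paper argues directly: it picks a $K$-basis $\{\bar h_1,\dots,\bar h_n\}$ of $H/J(L_0)$, writes $(\pi\otimes\pi)\Delta(h)=\sum_i \bar h_i'\otimes\bar h_i$ for $h\in J(L_0)$, and computes $\can_0\big(\sum_i (\bar h_i'\cdot x)\otimes\bar h_i\big)(y)=h\cdot(xy)=0$ for $x,y\in L_0$; injectivity of $\can_0$ then forces each $\bar h_i'\cdot x=0$, hence each $\bar h_i'=0$. Your approach is more structural: you recognise $\widetilde{\can}:L\otimes H\to\Hom_K(L_0,L)$ as an $L$-coalgebra morphism (by factoring through $\can$ and restriction), so its kernel is automatically a coideal, and the $H$-subextension hypothesis pins that kernel down as $L\otimes J(L_0)$; faithful flatness of $L/K$ then descends the coideal condition to $J(L_0)\subset H$. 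The paper's computation is more self-contained and avoids setting up the dual coalgebra structure on $\Hom_K(L_0,L)$, while your argument explains \emph{why} the $H$-subextension condition is exactly what is needed and would generalise more readily (indeed, it is closer in spirit to how the paper later treats the proartinian case via Lemma~\ref{cancoalgmap}). One small wording issue: in (d) your phrase ``its base change to $L$'' is not quite accurate---you are not base-changing $\can_0'$ but rather observing that $\can_0'$ followed by the inclusion $\End_K(L_0)\hookrightarrow\Hom_K(L_0,L)$ equals the stated composition; the argument itself is fine and matches the paper's.
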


\begin{proof}
\eqref{prop:HopfIdeal:aa}
Since for any $h\in H$, $h'\in J(L_0)$ and $x\in L_0$ we have that
\[(hh')\cdot x=h\cdot (h'\cdot x)=0\]
and $J(L_0)$ is indeed a left ideal. Moreover, $\epsilon(J(L_0))=0$ since $1\in L_0$ and hence for any $h\in J(L_0)$, $h\cdot 1=\epsilon(h)1=0$, so $\epsilon(h)=0$.

\eqref{prop:HopfIdeal:a}
We need to prove that $\Delta(J(L_0)) \subset H \ot J(L_0) + J(L_0) \ot H$. Denote $\pi : H \to H/J(L_0) : h \mapsto \bar{h}$ the natural projection.
Then $H \ot J(L_0) + J(L_0) \ot H=\ker(\pi\ot\pi)$. Hence for any $h\in J(L_0)$ we have that $\Delta(h)\in H \ot J(L_0) + J(L_0) \ot H$ if and only if $\pi\ot \pi\circ \Delta(h)=0$. To prove this, let $\big\{\bar{h}_1, \dots, \bar{h}_n\big\}$ be a basis of $H/J(L_0)$, and take elements $\bar{h}_1', \dots, \bar{h}_n' \in H/J(L_0)$ such that $(\pi \ot \pi)\Delta(h) = \bar{h}_{(1)} \ot \bar{h}_{(2)} = \sum_{i=1}^n \bar{h}_i' \ot \bar{h}_i$. For any $x, y\in L_0$ we find
\[\can_0\big(\sum_{i=1}^n \bar{h}_i' \cdot x \ot \bar{h}_i\big)(y) = \sum_{i=1}^n(\bar{h}_i' \cdot x)(\bar{h}_i \cdot y) = h\cdot (xy) = 0.\]
Since $\can_0$ is injective by the definition of $H$-subextensions, we obtain that $\sum_{i=1}^n \bar{h}_i' \cdot x \ot \bar{h}_i = 0$.
Because the elements $\bar h_i$ are a base, it follows that  $\bar{h}_i' \cdot x = 0$ $\forall x \in L_0$, hence $\bar{h}'_i = 0$ for all indices $i$ and we can conclude that $\pi\ot \pi\circ \Delta(h)=0$.\\
\eqref{prop:HopfIdeal:b}
Let $h \in J(L_0)$ and $h' \in H$, then $hh' \in J(L_0)$ because $\forall x \in L_0 : (hh')\cdot x = h(h' \cdot x) = 0$ (since $h' \cdot x \in L_0$).\\
\eqref{prop:HopfIdeal:c}
The Galois map $\can'_0:L_0 \ot H/J(L_0) \to \End_K(L_0)$ is surjective by Lemma~\ref{lem:surj}. Since $\can'_0$ is the restriction of the map $\can_0$, which is injective by the definition of $H$-subextensions, it is injective as well, hence $L_0/K$ is $H/J(L_0)$-Galois. Consequently, $J(L_0)$ is a Hopf ideal because $H/J(L_0)$ is a Hopf algebra by Lemma \ref{lem:cocommutativeHopf}.
Obviously, the map $\Phi$ is inclusion reversing.
\end{proof}

\begin{cor}\label{cor:Phi'}
Let $L/K$ be $H$-Galois and denote by $\alpha:H\to \End_K(L)$ the map associated with the action of $H$ on $L$. Then for any intermediate extension $L_0$, the following subsets of $H$ coincide:
\[\{h\in H~|~h_{(1)}\cdot x\ot h_{(2)} = x\ot h, \forall x\in L_0 \}=\{h \in H \;|\; \alpha(h) \in \End_{L_0}(L)\}. \]
Denoting the above subset of $H$ by $\Phi'(L_0)$, we obtain inclusion reversing maps
$$\Phi':\big\{L/L_0/K \st L_0 \t{ $H$-subextension}\big\} \longrightarrow \big\{H' \subset H \st H' \t{ Hopf subalgebra}\big\}$$
and
$$\Phi':\big\{L/L_0/K \st L_0 \t{ $H$-normal}\big\} \longrightarrow \big\{H' \subset H \st H' \t{ normal Hopf subalgebra}\big\}.$$
\end{cor}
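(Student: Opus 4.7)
The plan is to establish the equality of the two sets defining $\Phi'(L_0)$, and then identify the common set with the Hopf subalgebra produced from $J(L_0)$ via Theorem~\ref{theo:subalgebra-ideal} and Corollary~\ref{cor:subalgebra-ideal}.

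For the first displayed equality, I would apply $\can$ to both sides of $h_{(1)}\cdot x\ot h_{(2)}=x\ot h$ and evaluate at $y\in L$; using the module-algebra identity $(h_{(1)}\cdot x)(h_{(2)}\cdot y)=h\cdot(xy)$, this becomes $h\cdot(xy)=x(h\cdot y)$ for all $y\in L$, which is exactly $L_0$-linearity of $\alpha(h)$. Since $\can$ is an isomorphism by the $H$-Galois hypothesis, the original condition, required to hold for all $x\in L_0$, is equivalent to $\alpha(h)\in\End_{L_0}(L)$.

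Next I would show that, when $L_0$ is an $H$-subextension, $\Phi'(L_0)$ coincides with the coinvariants ${}^{\t{co}\,H/J(L_0)}H$ from Definition~\ref{defi:coinvariant-augmentation}. Let $\pi:H\to H/J(L_0)$ denote the canonical surjection. Since $J(L_0)=\ker\alpha_0$, the induced map $\overline\alpha_0:H/J(L_0)\hookrightarrow\Hom_K(L_0,L)$ is injective, and tensoring over the field $K$ with $H$ preserves this. Via the natural isomorphism $\Hom_K(L_0,L)\ot H\cong\Hom_K(L_0,L\ot H)$, the equation $\pi(h_{(1)})\ot h_{(2)}=\pi(1)\ot h$ becomes the coincidence of the maps $x\mapsto(h_{(1)}\cdot x)\ot h_{(2)}$ and $x\mapsto x\ot h$ on $L_0$, which is precisely the first description of $\Phi'(L_0)$. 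Because $L_0$ is an $H$-subextension, Proposition~\ref{prop:HopfIdeal}\eqref{prop:HopfIdeal:a} ensures that $J(L_0)$ is a left ideal two-sided coideal; Theorem~\ref{theo:subalgebra-ideal}, applicable since $H$ is cocommutative by Lemma~\ref{lem:cocommutativeHopf}, then yields that $\Phi'(L_0)=\varphi(J(L_0))$ is a Hopf subalgebra of $H$.

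The $H$-normal case and the inclusion reversal are then short. When $L_0$ is $H$-normal, Proposition~\ref{prop:HopfIdeal}\eqref{prop:HopfIdeal:c} upgrades $J(L_0)$ to a Hopf ideal, so Corollary~\ref{cor:subalgebra-ideal} makes $\Phi'(L_0)$ a normal Hopf subalgebra. Inclusion reversal is immediate from the second description: if $L_0\subseteq L_0'$, then $\End_{L_0'}(L)\subseteq\End_{L_0}(L)$, hence $\Phi'(L_0')\subseteq\Phi'(L_0)$. The only genuinely non-formal step is the translation between the $H/J(L_0)$-tensor identity and the family of $L\ot H$-identities indexed by $x\in L_0$; once this dictionary is set up, the remainder of the corollary is a direct reading of the results already established.
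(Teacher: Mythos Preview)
Your proposal is correct and follows essentially the same approach as the paper: both apply $\can$ to translate the tensor identity into $L_0$-linearity of $\alpha(h)$, then identify $\Phi'(L_0)$ with ${}^{\t{co}\,H/J(L_0)}H$ via the injectivity of $H/J(L_0)\hookrightarrow\Hom_K(L_0,L)$, and finally invoke Theorem~\ref{theo:subalgebra-ideal} and Corollary~\ref{cor:subalgebra-ideal}. The only cosmetic differences are that you spell out the isomorphism $\Hom_K(L_0,L)\ot H\cong\Hom_K(L_0,L\ot H)$ explicitly and derive inclusion reversal directly from the $\End_{L_0}(L)$ description, whereas the paper packages everything as $\Phi'=\varphi\circ\Phi$.
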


\begin{proof}
Consider $h\in H$ such that $h_{(1)}\cdot x_0\ot h_{(2)}=x_0\ot h\in L\ot H$, for all $x_0\in L_0$.
When we apply the bijective map $\can$ to this equality, we find that the previous equality holds if and only if 
$$(h_{(1)}\cdot x_0)(h_{(2)}\cdot x) = x_0 (h\cdot x)$$
for all $x_0\in L_0$ and all $x\in L$. As $(h_{(1)}\cdot x_0)(h_{(2)}\cdot x)=h\cdot (x_0x)$, we conclude that $h\in H$ satisfies the above equality if and only if $\alpha(h)$ is $L_0$-linear.

The fact that $\Phi$ is a well-defined and inclusion reversing map is obtained by remarking that $\Phi'=\phi\circ \Phi$, where $\phi$ is the correspondence from Theorem~\ref{theo:subalgebra-ideal} and Corollary~\ref{cor:subalgebra-ideal}. Indeed, since $\alpha_0:H/J(L_0)\to \Hom_K(L_0,L)$ is injective, we have that $h\in {}^{{\rm co} H/J(L_0)}H=\phi\circ \Phi(L_0)$ if and only if $h\in \Phi'(L_0)$.
\end{proof}

\subsection{The space of invariants}\selabel{invariant}

In this subsection, we define and study the space of invariants for an $H$-module algebra $L$.
This allows us to define the map $\Psi$ from Theorem~\ref{theo:fin-cor}.

\begin{lemma}\label{prop:intermediate}
Let $L$ be an $H$-module algebra and $I \subset H$. Denote as before $L^I$ the space of $I$-invariants.
\begin{enumerate}[(a)]
\item \label{prop:intermediate:a} If $I$ is a left ideal two-sided coideal, then $L^I$ is an intermediate extension of $L/K$.
\item \label{prop:intermediate:b} If $I$ is moreover a right ideal (hence a bi-ideal), then $L^I$ is $H$-stable.
\end{enumerate}
\end{lemma}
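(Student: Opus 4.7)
Recall that since $I$ is a two-sided coideal we have $\epsilon(I)=0$, so membership in $L^I$ is equivalent to the condition $h\cdot x = 0$ for all $h\in I$. Throughout I write $\Delta(h)\in H\ot I + I\ot H$ for $h\in I$ concretely as $\Delta(h) = \sum a_i\ot b_i + \sum c_j\ot d_j$ with $b_i,c_j\in I$.

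For part \eqref{prop:intermediate:a}, I would verify in order: (i) $L^I$ is a $K$-subspace of $L$ (immediate from the linearity of the action); (ii) $K\subset L^I$, because for $x\in K$ and $h\in I$ the $H$-module algebra axiom gives $h\cdot x=\epsilon(h)x=0$; (iii) $L^I$ is closed under multiplication, which is the only point where the coideal property really enters: for $x,y\in L^I$ and $h\in I$,
\[
h\cdot(xy)=(h_{(1)}\cdot x)(h_{(2)}\cdot y)=\sum (a_i\cdot x)(b_i\cdot y)+\sum (c_j\cdot x)(d_j\cdot y)=0,
\]
since each summand contains a factor $b_i\cdot y=0$ or $c_j\cdot x=0$. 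This shows $L^I$ is a $K$-subalgebra of $L$. Finally, since we are in the finite setting, $L^I$ is a finite-dimensional $K$-subalgebra of the field $L$, hence an integral domain finite over $K$, and therefore itself a field. This yields $K\subset L^I\subset L$ as required.

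For part \eqref{prop:intermediate:b}, assume additionally that $I$ is a right ideal. Given $h\in H$ and $x\in L^I$, I want to show $h\cdot x\in L^I$, i.e.\ $g\cdot(h\cdot x)=0$ for all $g\in I$. But
\[
g\cdot(h\cdot x)=(gh)\cdot x,
\]
and $gh\in I$ by the right ideal property, so $(gh)\cdot x=0$ because $x\in L^I$. Hence $H\cdot L^I\subseteq L^I$, which is exactly the definition of $H$-stability.

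There is no real obstacle: the content is simply a careful bookkeeping of how the coideal and ideal axioms interact with the $H$-module algebra structure. The only subtlety worth flagging is that for closure under inverses in (a) I rely on the finite-dimensionality of $L/K$ rather than on any direct Hopf-algebraic manipulation of $x^{-1}$; in the infinite situation this step would need to be revisited.
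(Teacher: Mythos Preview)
Your proof is correct and follows essentially the same route as the paper's: same use of $\epsilon(I)=0$ to get $K\subseteq L^I$, same application of the coideal condition $\Delta(I)\subset I\ot H + H\ot I$ for multiplicative closure, and an identical argument for part~(b). One minor remark on your closing caveat: the paper justifies that $L^I$ is a field by invoking that $L/K$ is \emph{algebraic} rather than finite-dimensional, and that argument already works in the infinite separable setting (any $K$-subalgebra of an algebraic field extension is a field), so no revision is actually needed there.
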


\begin{proof}
\eqref{prop:intermediate:a} Let $h \in I$ and $x\in K$.
Then $h\cdot x = \epsilon(h)x = 0$, so $L^I$ contains $K$. 
For any $x,y\in L^I$, we find that $h\cdot (xy) = (h_{(1)}\cdot x)(h_{(2)}\cdot y) = 0$, 
since $\Delta(h)\in I\ot H+H\ot I$. 
Hence $L^I$ is a ring and it is a field because $L/K$ is an algebraic extension.\\ 
\eqref{prop:intermediate:b} If $I$ is also a right ideal, then $\forall h' \in H, \forall x \in L^I: h\cdot (h'\cdot x) = (hh')\cdot x = 0$ because $hh' \in I$. So $h'\cdot x \in L^I$.
\end{proof}

\begin{lem}\label{lem:extend-rationality}
Let $L/K$ be an $H$-module algebra and let $I \subseteq H$ be a left ideal two-sided coideal. Let $\tilde{L}/L$ be any field extension.
Then $\can':L\ot H/J(L^I) \to \Hom_K(L^I,L)$ is injective if one of the following equivalent statements holds:
\begin{enumerate}[(i)]
\item \label{lem:extend-rationality:i} The natural map $L \otimes_K H/I \to \Hom_K(L^I,L)$ is injective.
\item \label{lem:extend-rationality:ii} The natural map $\tilde{L} \otimes_K H/I \to \Hom_K(L^I,\tilde{L})$ is injective.
\end{enumerate}
If moreover $L/K$ is $H$-Galois then the validity of any of these conditions implies that $L^I$ is an $H$-subextension and $I = J(L^I)$.
\end{lem}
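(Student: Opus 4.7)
The plan is as follows. First I would observe that $I\subseteq J(L^I)$: for every $h\in I$ and $x\in L^I$, the definition of $L^I$ (together with $\epsilon(h)=0$, which holds since $I$ is a two-sided coideal) forces $h\cdot x=0$, so $h\in J(L^I)$. This produces a canonical surjection $\pi:H/I\twoheadrightarrow H/J(L^I)$ under which the map of (i) factors as
\[
L\ot H/I \xrightarrow{\id_L\ot \pi} L\ot H/J(L^I) \xrightarrow{\can'} \Hom_K(L^I,L).
\]

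For the equivalence (i)$\Leftrightarrow$(ii), I would use that $L^I$ is finite dimensional over $K$, so that $\Hom_K(L^I,-)$ commutes with base change; together with the associativity of the tensor product this gives natural isomorphisms
\[
\tilde L\ot H/I \;\cong\; \tilde L\ot_L(L\ot H/I),\qquad \Hom_K(L^I,\tilde L)\;\cong\; \tilde L\ot_L\Hom_K(L^I,L),
\]
identifying the map in (ii) with the result of applying $\tilde L\ot_L -$ to the map in (i). Since $\tilde L/L$ is a nonzero field extension, hence faithfully flat, the two injectivity statements are equivalent.

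The implication ``(i) $\Rightarrow$ $\can'$ injective'' is then a short diagram chase: any $u\in L\ot H/J(L^I)$ with $\can'(u)=0$ can be lifted, via the surjection $\id_L\ot\pi$, to some $v\in L\ot H/I$, and the hypothesis (i) applied to $v$ forces $v=0$, hence $u=0$.

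For the additional claim, assume now that $L/K$ is $H$-Galois. The injectivity of $\can'$ is precisely condition (iii) of \leref{Hsubext}, so $L^I$ is an $H$-subextension. Moreover, under (i) both $\can'\circ(\id_L\ot\pi)$ and $\can'$ are injective while $\id_L\ot\pi$ is surjective, so $\id_L\ot\pi$ is itself a bijection; faithful flatness of $K\to L$ then forces $\pi:H/I\to H/J(L^I)$ to be a bijection, i.e.\ $I=J(L^I)$. The main obstacle here is essentially bookkeeping: one must keep track of the three base changes at play ($K\to L$, $K\to\tilde L$ and $L\to\tilde L$) and verify carefully that $\Hom_K(L^I,-)$ commutes with them, which is where the finiteness of $L^I/K$ enters. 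Beyond the factorization of the map in (i) through $\can'$, the argument is entirely formal.
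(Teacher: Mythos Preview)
Your proof is correct and follows essentially the same route as the paper's: observe $I\subseteq J(L^I)$, factor the map of (i) through $\can'$ via the surjection $\id_L\ot\pi$, and use faithful flatness of $\tilde L/L$ for the equivalence (i)$\Leftrightarrow$(ii). Your argument for $I=J(L^I)$ via faithful flatness of $L/K$ is actually more explicit than what the paper writes out.

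One small caveat: you invoke finite-dimensionality of $L^I/K$ to get the isomorphism $\Hom_K(L^I,\tilde L)\cong \tilde L\ot_L\Hom_K(L^I,L)$, but this hypothesis is not in the lemma as stated (only that $L$ is an $H$-module algebra). Fortunately you do not need the full isomorphism: the natural map $\tilde L\ot_L\Hom_K(L^I,L)\to \Hom_K(L^I,\tilde L)$ is always injective (choose the $\tilde L$-tensorands $L$-linearly independent), and that injectivity is all that is required to deduce (i)$\Leftrightarrow$(ii) from faithful flatness of $\tilde L/L$. With that adjustment your argument goes through verbatim and matches the paper's.
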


\begin{proof}
The equivalence between \eqref{lem:extend-rationality:i} and \eqref{lem:extend-rationality:ii} follows from the fact that $\tilde L$ is faithfully flat over $L$. Since $I\subset J(L^I)$, we have the projection $p:H/I\to H/J(L^I)$. Hence the canonical map from the statement factors as the composition
\[\xymatrix{L\ot H/I \ar[rr]^-{id\ot p} && L\ot H/J(L^I) \ar[rr]^-{\can'} && \Hom_K(L^I,L)}\]
and it follows that $\can'$ is injective as well. 
Hence, $L^I$ is an $H$-subextension if $L/K$ is $H$-Galois.
\end{proof}

Next we observe that the notion invariants is stable under base change. Recall that if $L$ is an $H$-module algebra, and $L\subset \tilde L$ is any field extension, then $\tilde L\ot L$ is an $\tilde L\ot H$-module algebra.

\begin{lemma}\label{invariantsbasechange}
Let $L$ be an $H$-module $K$-algebra and $F\subset H$ a $K$-linear subspace. Then for any base extension $\tilde L/L$, we have that $$(\tilde L\ot L)^{\tilde L\ot F} = \tilde L\ot L^{F}.$$
\end{lemma}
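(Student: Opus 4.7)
The plan is to reduce the statement to a simple linear-independence argument after choosing a $K$-basis of $\tilde L$. The action of $\tilde L \ot H$ on $\tilde L \ot L$ is, on pure tensors, given by $(y\ot h)\cdot(z\ot x)=yz\ot(h\cdot x)$, and $\tilde L \ot H$ is naturally an $\tilde L$-bialgebra with counit $\tilde\epsilon(y\ot h)=y\,\epsilon(h)$. The inclusion $\tilde L\ot L^{F}\subseteq (\tilde L\ot L)^{\tilde L\ot F}$ is immediate from this formula, since if $x\in L^F$ then $(y\ot f)\cdot(z\ot x)=yz\ot \epsilon(f)x=\tilde\epsilon(y\ot f)(z\ot x)$ for every $f\in F$ and $y\in \tilde L$.

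For the converse inclusion, first observe that by $\tilde L$-linearity of the action, an element $u\in \tilde L\ot L$ satisfies $(y\ot f)\cdot u=\tilde\epsilon(y\ot f)u$ for every $y\ot f\in \tilde L\ot F$ if and only if it satisfies $(1\ot f)\cdot u=\epsilon(f)u$ for every $f\in F$: indeed the general case follows from the case $y=1$ by multiplying by $y\ot 1$. Next, fix a $K$-basis $\{e_\alpha\}$ of $\tilde L$ and write $u$ uniquely as $u=\sum_\alpha e_\alpha\ot x_\alpha$ with $x_\alpha\in L$ almost all zero. For any $f\in F$,
\[ (1\ot f)\cdot u - \epsilon(f)u \;=\; \sum_\alpha e_\alpha\ot\bigl(f\cdot x_\alpha-\epsilon(f)x_\alpha\bigr).\]
Since the $e_\alpha$ form a $K$-basis of $\tilde L$, the natural map $\bigoplus_\alpha L\to \tilde L\ot L$ sending $(x_\alpha)_\alpha$ to $\sum_\alpha e_\alpha\ot x_\alpha$ is a $K$-linear isomorphism, so the above expression vanishes iff $f\cdot x_\alpha=\epsilon(f)x_\alpha$ for every index $\alpha$. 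Doing this for all $f\in F$ gives $x_\alpha\in L^F$ for each $\alpha$, whence $u\in \sum_\alpha e_\alpha\ot L^F=\tilde L\ot L^F$, as required.

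No real obstacle is expected: the only subtlety is the reduction to testing the invariance against the elements $1\ot f$, after which the statement becomes a direct consequence of the fact that over a field the tensor product preserves the relevant linear-independence.
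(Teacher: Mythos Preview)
Your proof is correct and follows essentially the same approach as the paper: both verify the easy inclusion directly from the formula for the action, and for the reverse inclusion both test invariance against elements of the form $1\ot f$ and then use $K$-linear independence of the $\tilde L$-components to conclude that each $L$-component lies in $L^F$. The only cosmetic difference is that the paper writes an arbitrary element as $\sum_i \tilde x_i\ot x_i$ with the $\tilde x_i$ assumed linearly independent, whereas you fix a full $K$-basis of $\tilde L$ from the start.
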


\begin{proof}
This follows from the fact that the space of invariants is a limit in $\Vect_K$ and the extension-of-scalars functor preserves limits. An explicit argument is as follows.
Take $\tilde x\ot x\in \tilde L\ot L^{F}$. Then for any $\tilde y\ot h\in L\ot F$, we find that 
$$(\tilde y\ot h)\cdot (\tilde x \ot x)=\tilde y\tilde x\ot h\cdot x=\tilde y\tilde x\ot \epsilon(y)x=\epsilon(\tilde y\ot h)\tilde x\ot x.$$
So $\tilde L\ot L^{F}\subset (\tilde L\ot L)^{\tilde L\ot F}$. On the other hand, take any $\sum_i\tilde x_i\ot x_i\in (\tilde L\ot L)^{\tilde L\ot F}$, where we suppose without loss of generality that the elements $\tilde x_i$ are linearly independent. Then for any $h\in F$, we find using the definition of the action under extension of scalars that
$$(1\ot h)\cdot (\sum_i\tilde x_i\ot x_i) = \sum_i\tilde x_i\ot h\cdot x_i$$
and on the other hand, since $\sum_i\tilde x_i\ot x_i$ is $F$-invariant, we find
$$(1\ot h)\cdot (\sum_i\tilde x_i\ot x_i) = \sum_i\tilde x_i\ot \epsilon(h)x_i$$
Since the elements $\tilde x_i$ are linearly independent we can conclude that $x_i\in L^F$ for all indices $i$, and hence $\sum_i\tilde x_i\ot h\cdot x_i\in \tilde L\ot L^F$.
\end{proof}

Let us show that the space of invariants with respect to a Hopf subalgebra coincides with the space of invariants for the associated ideal. Hence the invariants with respect to an ideal, which we will consider mostly here, correspond in the case of usual Galois extensions to the classical invariants.

\begin{lemma}\label{le:invariants_coinvariants}
Let $L/K$ be a finite $H$-Galois extension and let $I$ be a left ideal two-sided coideal of $H$. Consider the associated Hopf subalgebra  $H_0\subset H$ as in Theorem~\ref{theo:subalgebra-ideal}.
By dualizing we find that $(H/I)^*$ is a right coideal subalgebra of $H^*$ and $\pi:H^*\to H_0^*$ is a Hopf algebra surjection. Then the following subsets of $L$ coincide:
$$L^I=\rho^{-1}(L\ot (H/I)^*)=L^{co H_0^*}=L^{H_0},$$
where $\rho:L\to L\ot H^*, \rho(x)=x_{[0]}\ot x_{[1]}$ is the coaction as in \eqref{coaction} and $L^{co H_0^*}=\{x\in L~|~x_{[0]}\ot \pi(x_{[1]})=x\ot 1_{H_0^*}\}$ is the space of coinvariants.
\end{lemma}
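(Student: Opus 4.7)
The plan is to prove the four equalities by moving through a chain of implications, relying mainly on the pairing formula relating the $H$-action and the $H^*$-coaction on $L$. Recall that with the dual basis $\{(e_i,f_i)\}$ of $H$ one has $\rho(x)=\sum_i(e_i\cdot x)\otimes f_i$, and expanding any $h\in H$ as $h=\sum_i f_i(h)e_i$ yields the fundamental pairing
\[
h\cdot x \;=\; \langle h,x_{[1]}\rangle\, x_{[0]} \qquad \forall\, h\in H,\ x\in L.
\]
This is the key identity driving everything.

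First I would establish $L^I=\rho^{-1}(L\otimes(H/I)^*)$. Writing $\rho(x)=\sum_k x_k\otimes\phi_k$ with the $\phi_k\in H^*$ linearly independent, the pairing formula gives $h\cdot x=\sum_k\phi_k(h)\,x_k$. Using that the $x_k$ are necessarily nonzero, this vanishes for all $h\in I$ if and only if every $\phi_k$ lies in $I^\perp$. Under the canonical identification $I^\perp\cong(H/I)^*$, this is precisely the condition $\rho(x)\in L\otimes(H/I)^*$.

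Next I would show $L^{H_0}=L^{\mathrm{co}\,H_0^*}$. The $H_0^*$-coaction is the push-forward $\rho_0:=(\id\otimes\pi)\circ\rho$, and for $h\in H_0$ the pairing restricts cleanly to $h\cdot x=\langle h,\pi(x_{[1]})\rangle\,x_{[0]}$, since $\langle h,\pi(f)\rangle=f(h)$ for $h\in H_0$. From this identity, if $\rho_0(x)=x\otimes 1_{H_0^*}$ then $h\cdot x=\epsilon(h)x$ for all $h\in H_0$; conversely, writing $\rho_0(x)=\sum_k y_k\otimes\eta_k$ with $\eta_k\in H_0^*$ linearly independent, the condition $h\cdot x=\epsilon(h)x$ expresses equality of two linear maps $H_0\to L$, whose corresponding elements of $L\otimes H_0^*$ must therefore be equal, giving $\rho_0(x)=x\otimes 1_{H_0^*}$.

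Finally I would establish $L^I=L^{H_0}$ directly from the correspondence $I=H\cdot H_0^+$ of Theorem~\ref{theo:subalgebra-ideal}. For $\supseteq$: if $x\in L^{H_0}$ and $h\,h_0^+\in H H_0^+$ with $h_0^+\in H_0^+$, then $(hh_0^+)\cdot x = h\cdot(\epsilon(h_0^+)x)=0$ because $\epsilon(h_0^+)=0$, so $x\in L^I$. For $\subseteq$: for any $h_0\in H_0$ the element $h_0-\epsilon(h_0)1_H$ lies in $H_0^+\subseteq I$, hence $(h_0-\epsilon(h_0)1_H)\cdot x=0$ for $x\in L^I$, giving $h_0\cdot x=\epsilon(h_0)x$ and thus $x\in L^{H_0}$.

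The proof is largely bookkeeping once the pairing identity is set up; no step presents a genuine obstacle, and notably the argument does not even require the Hopf-Galois hypothesis, only that $L$ is an $H$-module algebra and $H$ is finite dimensional (so that the equivalence between $H$-actions and $H^*$-coactions is available). The main point requiring care is keeping the two dualities $I\leftrightarrow(H/I)^*$ and $H_0\leftrightarrow H_0^*$ straight alongside the correspondence $I=HH_0^+$.
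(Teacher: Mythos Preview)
Your proof is correct and uses essentially the same ingredients as the paper's: the pairing identity $h\cdot x=\langle h,x_{[1]}\rangle x_{[0]}$, the relation $I=HH_0^+$, and the observation $h_0-\epsilon(h_0)1_H\in H_0^+$. The paper organises the argument as a cycle of four inclusions $L^I\subset\rho^{-1}(L\otimes(H/I)^*)\subset L^{\mathrm{co}\,H_0^*}\subset L^{H_0}\subset L^I$ rather than three separate equalities, and for the second inclusion it uses the observation that the composite $H_0\hookrightarrow H\twoheadrightarrow H/I$ equals $\epsilon(-)1$, whence by duality $\pi|_{(H/I)^*}=\epsilon_{H^*}(-)1_{H_0^*}$; your direct argument for $L^{H_0}=L^{\mathrm{co}\,H_0^*}$ bypasses this. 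Your remark that the $H$-Galois hypothesis is not used is correct.

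One small slip in your first step: writing $\rho(x)=\sum_k x_k\otimes\phi_k$ with the $\phi_k$ linearly independent only forces the $x_k$ to be \emph{uniquely determined}, not linearly independent, so ``$x_k$ nonzero'' is not enough to conclude $\phi_k(h)=0$ individually from $\sum_k\phi_k(h)x_k=0$. Either take the $x_k$ linearly independent instead (which is what the argument actually needs), or argue more cleanly that $L\otimes I^\perp=\bigcap_{h\in I}\ker\big(\id_L\otimes\mathrm{ev}_h\big)$ and $(\id_L\otimes\mathrm{ev}_h)\rho(x)=h\cdot x$.
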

\begin{proof}
\ul{$L^I \subset \rho^{-1}(L \o (H/I)^*)$}. Take any $x \in L^I$ and take a finite dual base $\{(e_i,f_i)\}$ of $H$, whose first $m$ elements are in $I$ and the next $n-m$ elements generate a linear complement of $I$ in $H$. Then $f_{m+1},\ldots, f_n$ provide exactly a base of $(H/I)^*$ and we find 
$$\rho(x)= \sum_{i=1}^n (e_i\cdot x)\ot f_i = \sum_{i=m+1}^n (e_i\cdot x)\ot f_i\in L\ot A.$$
\ul{$\rho^{-1}(L \o (H/I)^*) \subset L^{\t{co}\,H_0^*}$}. 
Recall that $I=HH_0^+$. Then since for any $x\in H_0$, we have that $x-\epsilon(x)1\in H_0^+\subset I$, we find that the composition $H_0\subset H \twoheadrightarrow H/I$ is given by the map $x\mapsto \epsilon(x)1$. Dualizing this, we find that $\pi(a)=\epsilon_{H^*}(a)1_{H_0^*}$ for any $a\in (H/I)^*$. Now let $x \in L$ such that $\rho(x) \in L \o A$, then $x_{[0]} \o \pi(x_{[1]}) = x_{[0]} \o \epsilon(x_{[1]})1_{H_0^*} = x \o 1_{H_0^*}$.\\
\ul{$L^{\t{co}\,H_0^*} \subset L^{H_0}$}. Let $x \in L^{\t{co}\,H_0^*}$, then $x_{[0]} \o \pi(x_{[1]}) = x \o 1_{H_0^*}$. So $\forall b \in H_0$, $bx = x\, \big\la 1_{H_0^*}, b \big\ra = \epsilon(b)x$ and therefore $x \in L^{H_0}$.\\
\ul{$L^{H_0} \subset L^I$}. it is easy to see that $L^{H_0} \subset L^{H_0^+} = L^{HH_0^+}$. To finish the proof, just recall that $HH_0^+ = I$.
\end{proof}

\subsection{Greither-Pareigis Theory}

Let $L/K$ be a finite separable extension.
We view this extension in two ways: via classical Galois theory and via Hopf-Galois theory.
For the former, we denote by $\tilde L$ the normal closure of $L$ over~$K$ and consider the Galois groups $G=\Gal(\tilde L/K)$ and $G'=\Gal(\tilde L/L)$. The natural bijection $G/G' \to \Hom_{K-\Alg}(L,\tilde L)$ given by sending $g \in G$ to its restriction to~$L$ extends to an isomorphism of $\tilde L$-coalgebras
\[ \tilde{L}[G/G'] \to \Hom_K(L,\tilde L).\]
This isomorphism is $G$-equivariant for the left $G$-actions given by 
\[ g \cdot (\sum_{hG' \in G/G'} a_{hG'} hG') = \sum_{hG' \in G/G'} g(a_{hG'}) ghG' \quad \textnormal{and} \quad g\cdot f = g \circ f\]
for $g \in G$, $\sum_{hG' \in G/G'} a_{hG'} hG' \in \tilde{L}[G/G']$ and $f \in \Hom_K(L,\tilde L)$.

We now assume further that $L/K$ is $H$-Galois. Let us recall that then $H$ is a Hopf $K$-algebra and $L$ is a left $H$-module $K$-algebra.
We then also have a natural right action of $\tilde L \ot H$ on $\Hom_K(L,\tilde L)$ by setting
\[  \big(f \cdot (y \otimes h)\big)(x) = y f(h \cdot x)\]
for $x \in L$ and $y \otimes h \in \tilde{L} \ot H$ and the canonical map, extended to $\tilde L$, is now precisely obtained by applying this right $\tilde L \ot H$-action on the identity map $\id_L: L \to \tilde L$:
\[ \widetilde{\can}: \tilde L \ot H \to \Hom_K(L,\tilde L), \;\;\; y \ot h \mapsto \id_L \cdot (y\ot h) \]
and $\big(\id_L \cdot (y\ot h)\big)(x) = y(h \cdot x)$ for $x \in L$.
The assumption that $L/K$ is $H$-Galois now means that $\widetilde{\can}$ is an isomorphism and, in other words, that $\Hom_K(L,\tilde L)$ is exactly the right $\tilde L \otimes H$-orbit of $\id_L$.
The canonical map is hence an isomorphism of right $\tilde L \otimes H$-module $\tilde L$-coalgebras, as seen in Lemma~\ref{cancoalgmap}.

We further let $G$ act from the left on $\tilde{L} \otimes_K H$ on the left factor: $g \cdot (y \otimes h) = g(y) \otimes h$ for $g \in G$ and $y \ot h \in \tilde L \ot H$. Also, $G$ acts on the left on $\Hom_K(L,\tilde L)$ by usual composition.
Then we have the following compatibility relation between the left $G$-action and the right $\tilde L \ot H$-action on $\Hom_K(L, \tilde L)$:
\begin{equation}\label{eq:compGH}
 g \circ (f \cdot (y \ot h)) = (g\circ f) \cdot (g \cdot (y \ot h))
\end{equation}
for all $f \in \Hom_K(L, \tilde L)$, all $g \in G$ and all $y \ot h \in \tilde L \ot H$, as can be easily checked by evaluating both sides at $x \in L$, as follows:
$(g \circ(f \cdot (y \ot h)))(x) = g \big(y f(h \cdot x) \big) = g(y) g(f(h\cdot x))$ and
$\big((g\circ f)\cdot (g(y \ot h))\big)(x) = \big((g\circ f) \cdot (g(y) \ot h)\big)(x) = g(y) (g\circ f)(h \cdot x) = g(y) g(f(h\cdot x))$.
We remark explicitly that $\widetilde{\can}$ is not $G$-equivariant.

\smallskip

The following theorem is our account of crucial results from \cite{GreitherPareigis87}.

\begin{theorem} 
\ \label{th:GP} 
Let $L/K$ be a finite field extension that is $H$-Galois and use notations as above. The following assertions hold.
\begin{enumerate}[(a)]
\item \label{GP:a} There is an isomorphism of Hopf $\tilde L$-algebras
\[ \tilde L\ot H\cong \tilde L[N]\]
where $N$ is a group of order $[L:K]$, if and only if $L/K$ is separable. In all further points we assume this is the case.

\item \label{GP:b} The left action of $G$ on $\tilde L\ot H$ via $g\cdot (x\ot h)=g(x)\ot h$ induces a left action of $G$ on $N$ via group automorphisms.

\item \label{GP:c} The elements of $N$ act on the right by $\tilde L$-coalgebra automorphisms on $\Hom_K(L,\tilde L)$, in particular, $N$ acts on the right on the set $\Hom_{K-\Alg}(L,\tilde L) \cong G/G'$.

\item \label{GP:d} Restricting the canonical map $\widetilde{\can}:\tilde L\ot H\to \Hom_K(L,\tilde L)$ to $N\subset \tilde L[N]\cong \tilde L\ot H$, leads to a right $N$-equivariant bijection
\[ \beta:N\to \Hom_{K-\Alg}(L,\tilde L)\cong G/G',\]
which is given by $n\mapsto (1G') \cdot n$.
In other words, $N$ acts strictly transitively from the right on $G/G'$ and $\beta$ is the orbit map of $1G'$.

\item \label{GP:e} The left $G$-action and the right $N$-action on $\Hom_{K-\Alg}(L,\tilde L)\cong G/G'$ are related by the formula
\[ g \cdot \big( (hG') \cdot n \big) = (ghG') \cdot (g\cdot n) \]
where $n \in N$ and $g,h \in G$.

\item \label{GP:g} Viewing $N$ as a subgroup of $\Perm(G/G')$ via the right $N$-action on $G/G'$ and $G$ as a subgroup of $\Perm(G/G')$ via the left $G$-action on $G/G'$, the formula of \eqref{GP:e} expresses exactly the fact that $N$ is normalized by $G$ in $\Perm(G/G')$.
\end{enumerate}
\end{theorem}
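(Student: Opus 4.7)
The strategy is to establish \eqref{GP:a} first, since the group algebra structure on $\tilde L\ot H$ obtained there provides the structural scaffolding on which parts \eqref{GP:b}--\eqref{GP:g} rest. For \eqref{GP:a}, by Lemma~\ref{cancoalgmap} the canonical map $\widetilde\can:\tilde L\ot H\to \Hom_K(L,\tilde L)$ is an isomorphism of $\tilde L$-coalgebras. By Lemma~\ref{grouplikes}\eqref{grouplikes:c} combined with Lemma~\ref{grouplikes}\eqref{grouplikes:b} applied over $\tilde L$, the $\tilde L$-coalgebra $\Hom_K(L,\tilde L)\cong \Hom_{\tilde L}(L\ot_K\tilde L,\tilde L)$ admits an $\tilde L$-basis of grouplike elements if and only if the $\tilde L$-algebra $L\ot_K\tilde L$ is a product of copies of $\tilde L$, which is precisely the separability of $L/K$. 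Transporting across $\widetilde\can$, the cocommutative Hopf $\tilde L$-algebra $\tilde L\ot H$ has an $\tilde L$-basis of grouplikes iff $L/K$ is separable; since the grouplikes in a Hopf algebra form a group $N$ under multiplication, this basis yields $\tilde L\ot H\cong \tilde L[N]$, with $|N|=\dim_{\tilde L}(\tilde L\ot H)=\dim_K H=[L:K]$ (the last equality by comparing dimensions in the Hopf-Galois isomorphism $L\ot H\cong \End_K L$).

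For \eqref{GP:b}, the $G$-action on the $\tilde L$-factor of $\tilde L\ot H$ respects the Hopf structure (semilinearly over $\tilde L$), hence preserves the set of grouplikes together with its group multiplication, inducing a $G$-action on $N$ by group automorphisms. For \eqref{GP:c}, specializing the right module-coalgebra identity $\Delta(f\cdot b)=f_{(1)}\cdot b_{(1)}\ot f_{(2)}\cdot b_{(2)}$ of Lemma~\ref{cancoalgmap} to a grouplike $n$ gives $\Delta(f\cdot n)=(f_{(1)}\cdot n)\ot(f_{(2)}\cdot n)$ and $\epsilon(f\cdot n)=\epsilon(f)$, so $f\mapsto f\cdot n$ is a coalgebra morphism, invertible since $n$ is. As the grouplikes of $\Hom_K(L,\tilde L)$ are $\Hom_{K-\Alg}(L,\tilde L)\cong G/G'$ by Lemma~\ref{grouplikes}\eqref{grouplikes:c}, and coalgebra automorphisms preserve grouplikes, $N$ acts on $G/G'$. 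For \eqref{GP:d}, the coalgebra isomorphism $\widetilde\can$ restricts to a bijection between grouplikes, i.e.\ between $N$ and $G/G'$; since $\widetilde\can(n)=\id_L\cdot n$ and $\id_L$ corresponds to $1G'$, this restriction is $n\mapsto(1G')\cdot n$. Right $N$-equivariance is immediate from associativity of the right action, and strict transitivity follows from $|N|=[L:K]=|G/G'|$.

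Parts \eqref{GP:e} and \eqref{GP:g} follow from the general compatibility \eqref{eq:compGH}: taking $f=hG'$, a grouplike of $\Hom_K(L,\tilde L)$, and $y\ot h=n\in N$, a grouplike of $\tilde L\ot H$, gives $g\cdot((hG')\cdot n)=(ghG')\cdot(g\cdot n)$. In $\Perm(G/G')$ this reads $\lambda_g\circ\rho_n=\rho_{g\cdot n}\circ\lambda_g$, equivalently $\lambda_g\rho_n\lambda_g^{-1}=\rho_{g\cdot n}\in N$, which expresses exactly that $\lambda(G)$ normalizes $N$ inside $\Perm(G/G')$.

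The principal obstacle is \eqref{GP:a}: the equivalence between separability and the grouplike-basis property after base change to $\tilde L$. The key move is to route the argument through the $\tilde L$-coalgebra $\Hom_K(L,\tilde L)$, where the algebra--coalgebra duality of Lemma~\ref{grouplikes} converts the product-of-fields characterization of separability for $L\ot_K\tilde L$ into the grouplike-basis condition; the Hopf-Galois assumption then bridges this to $\tilde L\ot H$ via $\widetilde\can$. Once \eqref{GP:a} is established, the remaining statements are essentially a dictionary translating Hopf-algebraic data into the group-theoretic description afforded by $\tilde L\ot H\cong\tilde L[N]$.
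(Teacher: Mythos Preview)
Your proposal is correct and follows essentially the same approach as the paper's proof: both route part~\eqref{GP:a} through the coalgebra isomorphism $\widetilde\can$ and Lemma~\ref{grouplikes} to convert separability into the grouplike-basis condition, and both derive \eqref{GP:b}--\eqref{GP:g} by translating the module-coalgebra structure of Lemma~\ref{cancoalgmap} and the compatibility~\eqref{eq:compGH} into group-theoretic statements about~$N$. The only cosmetic difference is that you phrase separability via $L\ot_K\tilde L$ being a product of copies of~$\tilde L$, whereas the paper counts $K$-algebra embeddings $L\hookrightarrow\tilde L$; these are equivalent, and your remark that strict transitivity follows from the cardinality equality is redundant once the orbit map $\beta$ is known to be a bijection.
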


\begin{proof}
\eqref{GP:a}
The separability of $L/K$ means that the cardinality of $\Hom_{K-\Alg}(L,\tilde L)$ equals $[L:K]$, which is the $\tilde L$-dimension of $\Hom_{K}(L,\tilde L)$.
This, in turn, is by Lemma~\ref{grouplikes} equivalent to the fact that the $\tilde L$-coalgebra $\Hom_{K}(L,\tilde L)$ has a basis of grouplike elements.
Since the canonical map $\widetilde \can$ is a morphism of coalgebras and $\tilde L\ot H$ is moreover a Hopf algebra, this base of grouplike elements form a group~$N$, and hence, as a Hopf algebra, $\tilde L\ot H$ is isomorphic to a group algebra $\tilde L[N]$ for some group of order $[L:K]$.

\eqref{GP:b}
In fact the group $G$ acts on $\tilde L\otimes_K H$ by Hopf algebra automorphisms\footnote{To be more precise, $G$ acts by "relative Hopf algebra morphisms", meaning that the elements of $G$ also act as automorphisms of the base field and do not restrict to the identity on the base field.
This however, does not affect our conclusions.}. For any element $y\otimes h\in \tilde L\otimes H$ (sum understood), and any $g\in G$, we find that
\begin{multline*}
\Delta(g\cdot (y \otimes h))=\Delta(g(y)\otimes h)=g(y)\otimes h_{1}\otimes_{\tilde L} 1\otimes h_{(2)}=\\
g(y)\otimes h_{1}\otimes_{\tilde L} g(1)\otimes h_{(2)}=g\cdot (y\otimes h_{1})\otimes_{\tilde L} g\cdot (1\otimes h_{(2)})
\end{multline*}
So if $y\otimes h\in \tilde L\otimes H$ is grouplike, then also $g\cdot (y\ot h)$ will be grouplike. Since $N$ is exactly the group of grouplike elements in $\tilde L\otimes H$, we find indeed that $G$ acts on $N$. Since moreover the action of $G$ on $\tilde L\ot H$ respects the multiplication, $G$ acts moreover on $N$ by group automorphisms.

\eqref{GP:c}
Since $\Hom_K(L,\tilde L)$ is a right $\tilde L\ot H$-module $\tilde L$-coalgebra, the elements of $N$, which are exactly the grouplike elements of $\tilde L\ot H$, act from the right on $\Hom_K(L,\tilde L)$ by coalgebra automorphisms. As a coalgebra morphism sends grouplikes to grouplikes, it follows that $N$ permutes the grouplike elements of $\Hom_K(L,\tilde L)$. Finally, by Lemma \ref{grouplikes}\eqref{grouplikes:c}, the grouplike elements of $\Hom_K(L,\tilde L)$ are exactly the algebra morphism (or field morphisms) from $L$ to $\tilde L$.

\eqref{GP:d}
Since the canonical map is an $\tilde L$-coalgebra morphism by Lemma \ref{cancoalgmap}, it leads to a bijection between the set of grouplike elements of $\tilde L\ot H$, which is $N$, and the set of group\-like elements of $\Hom_L(L,\tilde L)$, which is $\Hom_{K-\Alg}(L,\tilde L)\cong G/G'$. Furthermore, again by Lemma~\ref{cancoalgmap}, $\widetilde\can$ is right $\tilde L\ot H$-linear, hence $N$-equivariant. Finally, the explicit form of this bijection in terms of the action of $N$ on $G/G'$ follows directly from the relation between the canonical map and the action of $\tilde L\ot H$ on $\Hom_K(L,\tilde L)$.

\eqref{GP:e}
This follows directly from \eqref{eq:compGH}, with $hG' \in G/G'$ corresponding to $f \in \Hom_K(L,\tilde{L})$ and $n$ corresponding to an element of $\tilde L \ot H$.

\eqref{GP:g}
Denote $\alpha:N\to \Perm(G/G')$ and $\lambda:G\to \Perm(G/G')$, then by \eqref{GP:e}, we find for all $g,h\in G$ and $n\in N$
$$\lambda(g)\alpha(n)\lambda(g^{-1})(hG')= g\cdot ((g^{-1}hG')\cdot n) = (hG')\cdot (g\cdot n) = \alpha(g\cdot n)(hG'),$$
and therefore $N$ is normalized by $G$ in $\Perm(G/G')$.
\end{proof}

In \cite{GreitherPareigis87}, this theorem is used to show that $H$-Galois structures on the extension $L/K$ are in bijective correspondence with subgroups $N$ of $\Perm(G/G')$ that act strictly transitive on $G/G'$ and that are normalized by $\lambda(G)$, which is the permutation of $G/G'$ given by left multiplication.
We will make use of this result in order to classify intermediate Hopf-Galois extensions.

\subsection{Intermediate extensions}\label{intermediate}
The aim of this section is to establish, for a finite separable $H$-Galois extension $L/K$, the following proposition characterising the fixed field of left ideals two-sided coideals as $H$-subextensions, which will allow us to prove the main Theorem~\ref{theo:fin-cor} in the finite case.

\begin{prop}\label{prop:main}
Let $L/K$ be a finite separable $H$-Galois extension, where $H$ is a Hopf $K$-algebra.
Then for all left ideals two-sided coideals $I \subseteq H$, the following statements hold:
\begin{enumerate}[(a)]
\item \label{prop:main:a} The fixed field $L^I$ is an $H$-subextension.
\item \label{prop:main:b} If $I$ is a Hopf ideal, then the extension $L^I/K$ is $H/I$-Galois.
\end{enumerate}
\end{prop}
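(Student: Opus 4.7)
I will use Greither-Pareigis theory (Theorem~\ref{th:GP}) to reduce both assertions to a dimension count carried out after base-changing to $\tilde L$. To begin, by Lemma~\ref{lem:extend-rationality}, statement (a) reduces to proving that the natural map $\tilde L\otimes H/I \to \Hom_K(L^I,\tilde L)$ is injective, after which $J(L^I)=I$ follows automatically. By Theorem~\ref{theo:subalgebra-ideal}, write $I = HH_0^+$ for a cocommutative Hopf subalgebra $H_0\subseteq H$; Lemma~\ref{le:invariants_coinvariants} then identifies $L^I=L^{H_0}$.

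Next I would base-change along $K\hookrightarrow\tilde L$. By Theorem~\ref{th:GP}(a), $\tilde L\otimes H \cong \tilde L[N]$. The Hopf subalgebra $\tilde L\otimes H_0\subseteq \tilde L[N]$ equals $\tilde L[N_0]$ for a subgroup $N_0\subseteq N$ by Proposition~\ref{subquot}(b), and since $\tilde L\otimes I = \tilde L[N]\cdot \tilde L[N_0]^+$, Proposition~\ref{subquot}(c) gives $\tilde L\otimes(H/I)\cong \tilde L[N/N_0]$, of $\tilde L$-dimension $[N:N_0]$.

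The crucial step is to compute $[L^I:K]$. By Lemma~\ref{invariantsbasechange}, $\tilde L\otimes L^{H_0} = (\tilde L\otimes L)^{\tilde L[N_0]}$. Separability of $L/K$ furnishes an isomorphism of $\tilde L$-algebras
\[
\Phi: \tilde L\otimes L \xrightarrow{\sim} \Map(N,\tilde L), \qquad \Phi(y\otimes x)(n) = y\,\sigma_n(x),
\]
where $\sigma_n = \widetilde{\can}(n)\in\Hom_{K-\Alg}(L,\tilde L)$ is as in Theorem~\ref{th:GP}(d). A direct computation using the right $\tilde L\otimes H$-linearity of $\widetilde{\can}$ shows that the transported left $N$-action on $\Map(N,\tilde L)$ is $(n\cdot f)(m) = f(mn)$. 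Hence $N_0$-invariants are exactly functions constant on cosets $mN_0$, so $\tilde L\otimes L^{H_0} \cong \Map(N/N_0,\tilde L)$ and $[L^I:K] = [N:N_0]$.

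To conclude, as in Lemma~\ref{lem:surj}, the isomorphism $L\otimes H\xrightarrow{\sim}\End_K(L)$ composed with the restriction $\End_K(L)\twoheadrightarrow\Hom_K(L^I,L)$ factors through $L\otimes H/I$ (since $\epsilon(I)=0$ forces $I$ to annihilate $L^I$), yielding a surjection $L\otimes H/I \twoheadrightarrow \Hom_K(L^I,L)$. Base-changing to $\tilde L$ by the faithfully flat $L/\tilde L$ gives a surjection $\tilde L\otimes H/I \twoheadrightarrow \Hom_K(L^I,\tilde L)$; as both sides have $\tilde L$-dimension $[N:N_0]$, this surjection is an isomorphism, so injective, proving (a) via Lemma~\ref{lem:extend-rationality}. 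For (b), assuming $I$ is a Hopf ideal, Lemma~\ref{prop:intermediate}(b) shows that $L^I$ is $H$-stable, so the canonical map restricts to $L^I\otimes H/I\to\End_K(L^I)$. Under $\Hom_K(L^I,L) \cong L\otimes_{L^I}\End_K(L^I)$ and $L\otimes H/I \cong L\otimes_{L^I}(L^I\otimes H/I)$, this restriction becomes an isomorphism after tensoring with $L$ over $L^I$; faithful flatness of $L/L^I$ then forces the restriction itself to be an isomorphism, proving $L^I/K$ is $H/I$-Galois. The main obstacle is the third paragraph: explicitly transporting the $N$-action on $\tilde L\otimes L$ via $\Phi$, which is what pins down $[L^I:K]$ and makes the dimensions match.
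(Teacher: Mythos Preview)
Your argument is correct and complete; the dimension count via $\Phi:\tilde L\otimes L\cong\Map(N,\tilde L)$ and the identification of $N_0$-invariants with functions on $N/N_0$ goes through exactly as you describe, and the verification that the transported $N$-action is right translation is indeed a one-line consequence of the right $\tilde L\otimes H$-linearity of $\widetilde{\can}$ (Lemma~\ref{cancoalgmap}) together with Theorem~\ref{th:GP}\eqref{GP:d}.

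Your route differs from the paper's in a meaningful way. The paper works on the \emph{coalgebra} side: via Lemma~\ref{H/I=N/V} it identifies $\tilde L\otimes H/I\cong\tilde L[N/V]$ for a $G$-equivariant subgroup $V\subseteq N$, and then invokes the purely group-theoretic Lemma~\ref{lem:gp-act} to produce a subgroup $U\subseteq G$ with $N/V\cong G/U$, after which Lemma~\ref{lem:main} shows $L^I=L^U$ and the map $\tilde L\otimes H/I\to\Hom_K(L^I,\tilde L)$ is literally the chain $\tilde L[N/V]\cong\tilde L[G/U]\cong\Hom_K(L^U,\tilde L)$. You instead work on the dual \emph{algebra} side, computing $[L^I:K]$ directly from the $\tilde L[N]$-module algebra $\tilde L\otimes L\cong\Map(N,\tilde L)$ and finishing by a dimension count on a surjection. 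What the paper gains is the explicit identification $L^I=L^U$ with a classical fixed field, and the group-theoretic Lemma~\ref{lem:gp-act} is deliberately stated profinitely so that it can be reused verbatim in the infinite case (Proposition~\ref{prop:LIsub}). What your approach gains is a shorter, more self-contained argument for the finite case that bypasses Lemma~\ref{lem:gp-act} entirely: the right-translation computation on $\Map(N,\tilde L)$ packages the same content in two lines.
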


\begin{lem}\label{H/I=N/V}
There is a bijective correspondence between the sets of:
\begin{itemize}
\item left ideals two-sided coideals $I \subseteq H$, and
\item $G$-equivariant subgroups $V \subseteq N$
\end{itemize}
Under this correspondence, the isomorphism $\tau:\tilde L\ot_K H\to \tilde L[N]$ from \ref{th:GP}\eqref{GP:a} induces an isomorphism of left $\tilde{L} \ot H$-module $\tilde{L}$-coalgebras
\[\tau: \tilde{L} \otimes_K H/(\tilde{L} \ot I) \cong \tilde{L} \otimes_K H/I \to \tilde{L}[N/V].\]
Moreover, $V$ is a normal subgroup of~$N$ if and only if $I \subset H$ is a Hopf ideal. In that case, $\tau$ is an isomorphism in the category of $\tilde{L}$-Hopf algebras.
\end{lem}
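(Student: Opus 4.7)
The strategy is to transport the classification through the Hopf algebra isomorphism $\tau : \tilde L \ot H \to \tilde L[N]$ of Theorem~\ref{th:GP}\eqref{GP:a}, reducing the problem to a classification inside the cocommutative Hopf algebra $\tilde L[N]$ and then extracting the $K$-rational data by classical Galois descent along the Galois extension $\tilde L/K$. Concretely, $I \mapsto \tilde L \ot I$ is a bijection between $K$-subspaces of $H$ and $G$-stable $\tilde L$-subspaces of $\tilde L \ot H$, where $G$ acts semi-linearly on the first tensor factor. Under this bijection, $I$ is a left ideal, a two-sided coideal, or a Hopf ideal of $H$ if and only if $\tilde L \ot I$ is so in $\tilde L \ot H$, since all structure maps on $\tilde L \ot H$ are $\tilde L$-linear extensions of those on $H$.

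On the $\tilde L[N]$-side, Theorem~\ref{theo:subalgebra-ideal} and Corollary~\ref{cor:subalgebra-ideal} provide a bijection between left ideals two-sided coideals (resp.\ Hopf ideals) of $\tilde L[N]$ and Hopf subalgebras (resp.\ normal Hopf subalgebras), while Proposition~\ref{grouplikesHopf}\eqref{subquot:b} identifies these with subgroups $V \subseteq N$ (resp.\ normal subgroups) via $V \mapsto \tilde L[V]$. The crucial computation is that, since $\tau$ intertwines the $G$-action on $\tilde L \ot H$ with an action on $\tilde L[N]$ that is $\tilde L$-semi-linear on coefficients and permutes $N$ via the $G$-action of Theorem~\ref{th:GP}\eqref{GP:b} (this is precisely what was established in the proof of that part), expanding
\[ g\cdot \Big(\sum_v a_v v\Big) = \sum_v g(a_v)\,(g\cdot v)\]
shows that $\tilde L[V] \subseteq \tilde L[N]$ is $G$-stable if and only if $V \subseteq N$ is $G$-stable, i.e.\ $G$-equivariant. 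Combining with the first step yields the claimed bijection.

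For the explicit quotient isomorphism, I would chain the natural identifications
\[ \tilde L \ot H/I \;\cong\; (\tilde L \ot H)/(\tilde L \ot I) \;\overset{\tau}{\cong}\; \tilde L[N]/\tilde L[N]\,\tilde L[V]^+ \;\cong\; \tilde L[N/V],\]
where the middle step uses that, under Theorem~\ref{theo:subalgebra-ideal}, the left ideal two-sided coideal $\tilde L \ot I$ corresponds to the Hopf subalgebra $\tilde L[V]$, so that $\tau(\tilde L \ot I) = \tilde L[N]\,\tilde L[V]^+$, and the last step is Proposition~\ref{grouplikesHopf}\eqref{subquot:c}. The composite is automatically a morphism of left $\tilde L \ot H$-module $\tilde L$-coalgebras, and upgrades to a Hopf algebra isomorphism exactly when $I$ is a Hopf ideal, equivalently when $V$ is normal. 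The main obstacle I anticipate is the careful bookkeeping of the two $G$-actions, in particular verifying that $G$-stability of the Hopf subalgebra $\tilde L[V]$ is genuinely equivalent to $G$-stability of the subgroup $V$; once this compatibility is settled, the rest is a matter of assembling the existing results.
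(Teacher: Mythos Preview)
Your proposal is correct and follows essentially the same route as the paper: both arguments pass through the isomorphism $\tilde L\ot H\cong \tilde L[N]$, invoke Theorem~\ref{theo:subalgebra-ideal} together with Proposition~\ref{grouplikesHopf} to identify the relevant sub\-objects of $\tilde L[N]$ with subgroups of~$N$, and use Galois descent along $\tilde L/K$ to recover the $K$-rational data. The only organisational difference is that the paper performs the descent at the level of Hopf subalgebras (taking $H'=\tilde L[V]^G$ via the categorical equivalence of Proposition~\ref{pr:descent}, and then forming $I=HH'^+$ inside $H$), whereas you descend the ideal directly and apply the subalgebra--ideal correspondence on the $\tilde L[N]$-side; either ordering works, and the ``bookkeeping'' you flag --- that $G$-stability of $\tilde L[N]\,\tilde L[V]^+$, of $\tilde L[V]$, and of $V$ are all equivalent because $G$ acts by (semi-linear) Hopf automorphisms --- is exactly what makes the two orderings interchangeable.
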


\begin{proof}
Applying extension of scalars on a left ideal two-sided coideal $I\subset H$, we obtain a left ideal two-sided coideal $\tilde L\ot I$ in $\tilde L\ot H\cong \tilde L[N]$, so the existence of a subgroup $V \subseteq N$ follows from Proposition~\ref{grouplikesHopf}.
Recall from Theorem \ref{th:GP}\eqref{GP:b} that the $G$-action on $N$ is induced by the action of $G$ on $\tilde L\ot H$ on the first tensorant. Since the quotient $\tilde{L} \otimes H \twoheadrightarrow \tilde{L} \otimes H/I$ is clearly $G$-equivariant under this action, $V$ becomes a $G$-equivariant subgroup of~$N$.

Conversely, if $V\subseteq N$ is a $G$-equivariant subgroup, then $\tilde L[V]$ is a Hopf subalgebra of $\tilde L[N]$ in the symmetric monoidal category of $\tilde L\# K[G]$-modules. Applying Proposition~\ref{pr:descent}, we find that the space of $G$-invariants $H'=\tilde L[V]^G$ is a Hopf subalgebra of $H$ in $\Vect_K$. The desired left ideal two-sided coideal is then given by $I=H H'^+$ (see Theorem~\ref{theo:subalgebra-ideal}). The correspondence is indeed bijective, as it is based on the equivalence of categories from Proposition~\ref{pr:descent}.

The last statement follows directly from the well-known fact that those left ideals two-sided coideals in a group algebra that are Hopf ideals, correspond to those subgroups that are normal.
\end{proof}

We now include a purely group theoretic lemma, which we state here in its profinite version needed later.

\begin{lem}\label{lem:gp-act}
Let $G$ be a profinite group and $G' \subseteq G$ a closed subgroup.
Further, let $N$ be a (possibly non-commutative) profinite left $G$-module which acts continuously on $G/G'$ on the right such that for all $n \in N$, all $g,h \in G$ we have
\[ g \cdot \big( (hG') \cdot n \big) = (ghG') \cdot (g\cdot n) \]
and, in particular, for $h=1$,
\begin{equation}\label{eq:gn}
g \cdot \big( (1G') \cdot n \big) = (gG') \cdot (g \cdot n)
\end{equation}
We assume that the orbit map
\[ \beta: N \to G/G', \;\;\; n \mapsto (1G').n \]
is a homeomorphism. It is trivially right $N$-equivariant.
Finally, let $V \subseteq N$ be a $G$-equivariant open (resp.\ closed) subgroup $V \subseteq N$ and define
\[ U = \{ u \in G \;|\; uG' \in \beta(V)\}.\]

Then the following statements hold.
\begin{enumerate}[(a)]
\item \label{lem:gp-act:a} $U$ is an open (resp.\ closed) subgroup of~$G$, which contains $G'$. Moreover, if $V \subseteq N$ is open, then $G/U$ is finite.

\item \label{lem:gp-act:b} For all $g \in G$ and all $v \in V$ we have
\[ \pr_U\big(  (gG') \cdot v \big) = \pr_U ( gG' ) = gU, \]
where $\pr_U: G/G' \twoheadrightarrow G/U$ denotes the natural projection.

\item \label{lem:gp-act:c} The composition $N \xrightarrow{\beta} G/G' \overset{\pr_U}{\twoheadrightarrow} G/U$ induces a homeomorphism $N/V \to G/U$.

\end{enumerate}
\end{lem}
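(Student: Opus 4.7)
The plan is to establish parts (a), (b), (c) in order, invoking the compatibility relation \eqref{eq:gn} (in its general form with $h\ne 1$) and the $G$-equivariance of $V$ at each step, and to finish with a compactness argument upgrading continuity to a homeomorphism in (c).

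For part (a), since $\beta$ is a homeomorphism, $\beta(V)$ is open (resp.\ closed) in $G/G'$, and $U$ is its preimage under the continuous projection $G\twoheadrightarrow G/G'$, hence inherits the same topological property. The inclusion $G'\subseteq U$ is immediate from $1\in V$ and $\beta(1)=1G'$. To see that $U$ is a subgroup, I would write $u_iG'=(1G')\cdot v_i$ for $u_i\in U$, $v_i\in V$, and use the compatibility relation to compute
\[ u_1u_2G' \;=\; u_1\cdot\big((1G')\cdot v_2\big) \;=\; (u_1G')\cdot(u_1\cdot v_2) \;=\; (1G')\cdot\bigl(v_1(u_1\cdot v_2)\bigr), \]
which lies in $\beta(V)$ because $V$ is both $G$-stable and a subgroup. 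For inverses, applying $u^{-1}$ to $uG'=(1G')\cdot v$ via the compatibility yields $G'=(u^{-1}G')\cdot(u^{-1}\cdot v)$, and then acting on the right by $(u^{-1}\cdot v)^{-1}\in V$ produces $u^{-1}G'=(1G')\cdot(u^{-1}\cdot v)^{-1}\in\beta(V)$. Finiteness of $G/U$ when $V$ is open is immediate, as open subgroups of profinite groups are of finite index.

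For part (b), I substitute $n=g^{-1}\cdot m$ in the compatibility relation to obtain the rewriting $(gG')\cdot m = g\cdot\big((1G')\cdot(g^{-1}\cdot m)\big)$. Applying it with $m=v\in V$, the inner argument $g^{-1}\cdot v$ again lies in $V$ by $G$-equivariance, so $(1G')\cdot(g^{-1}\cdot v)=uG'$ for some $u\in U$ by the definition of $U$; hence $(gG')\cdot v=guG'$, which projects to $gU$ in $G/U$. For part (c), the continuous composition $\pr_U\circ\beta:N\to G/U$ is constant on each right coset $nV$ by (b), so it descends to a continuous map $\bar\beta:N/V\to G/U$. Surjectivity is inherited from $\beta$; for injectivity, writing $\beta(n_i)=g_iG'$ with $g_2=g_1u$ for some $u\in U$, I reverse the computation of (b) by setting $v=g_1\cdot\beta^{-1}(uG')\in V$, which gives $\beta(n_1v)=\beta(n_2)$, hence $n_1V=n_2V$ by injectivity of $\beta$. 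Since $N/V$ is compact (as a quotient of the profinite group $N$) and $G/U$ is Hausdorff (as $U$ is closed), $\bar\beta$ is automatically a homeomorphism.

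The main technical hurdle is the combined subgroup verification for $U$ in (a) and the parallel injectivity argument in (c); both pivot on the compatibility relation and the $G$-equivariance of $V$, with everything else being topological bookkeeping or a direct consequence of $\beta$ being a $G$-compatible bijection.
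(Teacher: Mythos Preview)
Your proof is correct and matches the paper's approach essentially line for line: the subgroup verification in (a), the coset computation in (b), and the bijectivity argument in (c) all hinge on the same applications of the compatibility relation and $G$-equivariance of~$V$. The only cosmetic difference is in (c), where the paper observes that $\pr_U\circ\beta$ is an open map (as the composition of a homeomorphism with an open quotient map) to obtain the homeomorphism, whereas you invoke the compact--Hausdorff criterion; both are routine and equivalent here.
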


\begin{proof}
\eqref{lem:gp-act:a} If $\pi: G \to G/G'$ denotes the natural projection, then $U = \pi^{-1}\big(\beta(V)\big)$, showing both the the containment $G' \subseteq U$ and the open-ness (closed-ness) of~$U$.
Let now $u_1,u_2 \in U$ and $v_1,v_2 \in V$ such that $u_1 G = (1G') \cdot v_1$ and $u_2 G = (1G') \cdot v_2$.
Then by \eqref{eq:gn} we have
\[ (1G')\cdot \big(v_1 (u_1 \cdot v_2)\big) = \big((1G') \cdot v_1\big) (u_1 \cdot v_2) = (u_1 G') \cdot (u_1 \cdot v_2) = u_1 \big( (1G') \cdot v_2 \big) = u_1u_2 G',\]
showing $u_1,u_2 \in U$ as $v_1 (u_1 \cdot v_2) \in V$.
Applying \eqref{eq:gn} with $g= u^{-1}$ and $n=v$ for $uG' = (1G') \cdot v$ we obtain
\[ 1G' = u^{-1} \big( (1G') \cdot v \big) = (u^{-1}G') \cdot (u^{-1} \cdot v),  \]
which shows $u^{-1} \in U$ by acting on both sides with $(u^{-1} \cdot v)^{-1}$.
It is well known that open subgroups are of finite index.

\eqref{lem:gp-act:b}
We take $u \in U$ such that $(1G') \cdot (g^{-1} \cdot v) = uG'$ and apply again \eqref{eq:gn} to get
\[ (gG') \cdot v  = (gG') \cdot \big(g \cdot (g^{-1} \cdot v)\big) = g \big( (1G') \cdot (g^{-1} \cdot v) \big) = guG'. \]

\eqref{lem:gp-act:c}
The composite map $\pr_U \circ \beta$ is surjective, continuous and open. First let $n \in N$ and $v \in V$. Then there are $g \in G$ and $u \in U$ with $(1G') \cdot n=gG'$ and $(1G') \cdot v=uG'$. We then have
\begin{multline*}
 \pr_U \circ \beta(nv) = \pr_U\big( (1G') \cdot (nv) \big) = \pr_U \big( ((1G') \cdot n) \cdot v \big) = \pr_U \big( (gG') \cdot v \big)\\
 = \pr_U \big( gG' \big) = \pr_U \big( (1G') \cdot n \big) = \pr_U \circ \beta(n),
\end{multline*}
showing that the map indeed descends to a surjective map $N/V \twoheadrightarrow G/U$.

Assume now $\pr_U \circ \beta (n) = \pr_U \circ \beta (m)$ for $m,n \in N$. We then have $g \in G$ and $u \in U$ such that $(1G') \cdot m=gG'$ and $(1G') \cdot n = guG'$. Let $v \in V$ be such that $(1G') \cdot v=uG'$.
But we have
\[ guG' = g(uG') = g\big((1G') \cdot v\big) = (gG') \cdot (g \cdot v) = ((1G') \cdot m) \cdot (g \cdot v) = (1G') \cdot (m \cdot (g \cdot v)). \]
Because of the injectivity of $\beta$, it thus follows $n = m \cdot (g \cdot v)$, showing $nV = mV$, as required.
\end{proof}

\begin{lem}\label{lem:main}
Let $I \subseteq H$ be a left ideal two-sided coideal. Consider the associated $G$-equivariant subgroup $V\subset N$ (where as before $\tilde L\ot H\cong \tilde L[N]$) as in Lemma \ref{H/I=N/V} and $U\subset G=\Gal(\tilde L/K)$ the associated subgroup containing $G'=\Gal(L/K)$ as in Lemma~\ref{lem:gp-act}.

We have $L^I = L^U$ and the natural map
\[ \tilde L \otimes_K H/I \to \Hom_K(L^I,\tilde{L})\]
is an isomorphism.
\end{lem}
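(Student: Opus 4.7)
The plan is to base-change along $K \subset \tilde L$ and reduce both assertions to the combinatorial facts about $N$, $V$, $G$, $U$ already established in Lemma~\ref{lem:gp-act}. Throughout I use $\tilde L \ot H \cong \tilde L[N]$ from Theorem~\ref{th:GP}\eqref{GP:a}; under this isomorphism $\tilde L \ot I$ corresponds to $\tilde L[N]\cdot \tilde L[V]^+$, whose associated Hopf subalgebra in the sense of Theorem~\ref{theo:subalgebra-ideal} is $\tilde L[V]$ (this is precisely how $V$ is attached to $I$ in Lemma~\ref{H/I=N/V}). Since Hopf-Galois extensions are preserved under base change, $(\tilde L \ot L)/\tilde L$ is finite $\tilde L[N]$-Galois.

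For the equality $L^I=L^U$, I first apply Lemma~\ref{invariantsbasechange} to get $\tilde L \ot L^I=(\tilde L \ot L)^{\tilde L \ot I}$, and then Lemma~\ref{le:invariants_coinvariants} applied to the base-changed extension to identify this with $(\tilde L \ot L)^{\tilde L[V]}$; since elements of $V$ are grouplike with counit $1$, this is just the classical $V$-invariants. Dualizing the Greither-Pareigis isomorphism $\tilde L[G/G']\cong \Hom_K(L,\tilde L)$ yields an isomorphism $\tilde L \ot L\cong \tilde L^{G/G'}$ of $\tilde L$-algebras, which intertwines the left $N$-action with the action dual to the right $N$-action on $G/G'$ from Theorem~\ref{th:GP}\eqref{GP:d}. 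Thus $V$-invariants correspond to functions on $G/G'$ constant on $V$-orbits, which by Lemma~\ref{lem:gp-act}\eqref{lem:gp-act:b}-\eqref{lem:gp-act:c} are the fibers of $\pr_U:G/G' \twoheadrightarrow G/U$. Hence $(\tilde L \ot L)^V=\tilde L^{G/U}$, and since $L^U/K$ is separable of degree $[G:U]$ the natural map $\tilde L \ot L^U\to \tilde L^{G/U}\subset \tilde L^{G/G'}\cong \tilde L \ot L$ is an isomorphism onto $\tilde L^{G/U}$ for dimension reasons. This yields $\tilde L \ot L^I=\tilde L \ot L^U$ inside $\tilde L \ot L$, whence $L^I=L^U$ by faithfully flat descent along $K\subset \tilde L$.

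For the second claim, by Lemma~\ref{H/I=N/V} we have $\tilde L \ot H/I\cong \tilde L[N/V]$ as $\tilde L$-coalgebras, while Lemma~\ref{grouplikes}\eqref{grouplikes:c} applied to the finite separable extension $L^U/K$ gives $\Hom_K(L^U,\tilde L)\cong \tilde L[G/U]$ with grouplike basis $\Hom_{K-\Alg}(L^U,\tilde L)\cong G/U$. Under these identifications the natural map $\tilde L \ot H/I\to \Hom_K(L^I,\tilde L)=\Hom_K(L^U,\tilde L)$ sends a grouplike $[n]\in N/V$ to the restriction to $L^U$ of $\beta(n)\in \Hom_{K-\Alg}(L,\tilde L)\cong G/G'$, that is, to $\pr_U(\beta(n))\in G/U$. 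By Lemma~\ref{lem:gp-act}\eqref{lem:gp-act:c}, the resulting map $N/V\to G/U$ is a bijection; being a coalgebra map that is bijective on grouplike bases, the natural map is therefore an isomorphism.

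The principal obstacle is bookkeeping around the duality between the left $N$-action on $\tilde L \ot L$ arising from the $H$-action on $L$ and the right $N$-action on $\Hom_K(L,\tilde L)$ from Theorem~\ref{th:GP}, in particular verifying that the identification $\tilde L \ot L\cong \tilde L^{G/G'}$ intertwines the two so that $V$-orbits on $G/G'$ truly correspond to $V$-invariant functions, and hence to the image of $\tilde L\ot L^U$.
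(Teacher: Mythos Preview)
Your argument is correct. Both proofs hinge on the bijection $N/V\to G/U$ from Lemma~\ref{lem:gp-act}\eqref{lem:gp-act:c}, and your treatment of the isomorphism $\tilde L\ot H/I\to\Hom_K(L^I,\tilde L)$ via tracking grouplikes is essentially what the paper does. The proof of $L^I=L^U$, however, takes a genuinely different route.

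The paper first writes down the chain $\tilde L\ot H/I\cong\tilde L[N/V]\cong\tilde L[G/U]\cong\Hom_K(L^U,\tilde L)$ and then derives $L^U\subseteq L^I$ by an elementary trick: for any injective $f\in\Hom_K(L,\tilde L)$ and $i\in I$, the image of $f\cdot(1\ot i)$ in $\tilde L\ot H/I\cong\Hom_K(L^U,\tilde L)$ vanishes, so $f(i\cdot x)=0$ for all $x\in L^U$, whence $i\cdot x=0$. The equality $L^I=L^U$ then falls out because the surjection $\Hom_K(L^I,\tilde L)\twoheadrightarrow\Hom_K(L^U,\tilde L)$ is forced to be bijective by the composite being an isomorphism. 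Your route instead computes $\tilde L\ot L^I$ as the $V$-invariants in $\tilde L\ot L\cong\tilde L^{G/G'}$ via Lemmas~\ref{invariantsbasechange} and~\ref{le:invariants_coinvariants}, identifies these with $\tilde L^{G/U}=\tilde L\ot L^U$, and descends. This is a more descent-flavoured argument that makes the $V$-orbit structure on $G/G'$ explicit, at the price of the duality bookkeeping you flag (which is correct as you describe it). The paper's argument is shorter and avoids invoking those two lemmas; yours is more systematic. One minor caveat: Lemma~\ref{le:invariants_coinvariants} is stated for a field extension, whereas you apply it to the $\tilde L$-algebra $\tilde L\ot L$; its proof, however, uses only the $H$-module algebra structure and the correspondence of Theorem~\ref{theo:subalgebra-ideal}, so the extension is harmless.
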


\begin{proof}
Theorem~\ref{th:GP} ensures that the hypotheses of Lemma~\ref{lem:gp-act} are fulfilled.
From Lemmas \ref{lem:gp-act} and \ref{H/I=N/V}, it follows that we have the $K$-linear isomorphism\footnote{More precisely, it is an isomorphism of $\tilde L$-coalgebras and right $\tilde L\ot H$-linear maps.}
\[ \tilde L \otimes_K H/I \cong \tilde L [N/V] \cong  \tilde L[G/U] \cong \Hom_K(L^U,\tilde{L}). \]
Let now $i \in I$ and $f \in \Hom_K(L,\tilde L)$ be injective (for instance, a field homomorphism). Under the canonical map, $f$ corresponds to an element $\psi$ in $\tilde L \ot H$. It is clear that $\psi \cdot (1 \otimes i)$ is zero in $\tilde L \otimes H/I$. This, however, means that the restriction to $L^U$ of $f \cdot (1\otimes i)$ is the zero map. In particular, we see for all $x \in L^U$
\[ \big(f \cdot (1 \ot i)\big)(x) = f(i \cdot x) = 0.\]
The injectivity of $f$ now implies $i \cdot x=0$ for all $x \in L^U$. We conclude $L^U \subseteq L^I$.
The proof is now concluded by observing that the composite
\[ \tilde L \otimes H/I \twoheadrightarrow \Hom_K(L^I,\tilde L) \overset{\res}{\twoheadrightarrow} \Hom_K(L^U,\tilde L) \]
is exactly the isomorphism above, whence the last epimorphism is bijective and therefore $L^I = L^U$.
\end{proof}

\begin{proof}[Proof of Proposition~\ref{prop:main}.]
From the isomorphism of Lemma~\ref{lem:main}, Lemma \ref{lem:extend-rationality} directly gives the statement~\eqref{prop:main:a}.
The statement~\eqref{prop:main:b} is by definition.
\end{proof}

We can now prove our main theorem for finite separable Hopf-Galois extensions.

\begin{theo}\label{theo:fin-cor}
Let $L/K$ be a separable finite $H$-Galois extension.
Then the following maps are inclusion reversing bijections:
\[
\xymatrix@R0.5cm{
\big\{L/L_0/K \st L_0 \t{ $H$-subextension }\big\} \ar@<.5ex>[r]^-\Phi & \big\{I \subset H \st I \t{ left ideal, two-sided coideal }\big\} \ar@<.5ex>[l]^-{\Psi}\\
L_0 \ar@{|->}[r] & J(L_0)\\
L^I  & I \ar@{|->}[l]\\
}\]
Moreover, the above correspondence restricts to a bijection between the following subsets
\[
\xymatrix@R0.5cm{
\big\{L/L_0/K \st L_0 \t{ $H$-normal }\big\} \ar@<.5ex>[r]^-\Phi
& \big\{I \subset H \st I \t{ Hopf ideal }\big\}
\ar@<.5ex>[l]^-{\Psi} \\
}\]
\end{theo}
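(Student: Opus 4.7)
The plan is to assemble the theorem from the preparatory results, as the hard work has been distributed among Propositions~\ref{prop:HopfIdeal} and~\ref{prop:main} and Lemmas~\ref{prop:intermediate}, \ref{lem:extend-rationality} and~\ref{lem:main}. What remains is to check that $\Phi$ and $\Psi$ are mutually inverse (well-definedness and inclusion reversal being already in place). More precisely, Proposition~\ref{prop:HopfIdeal} shows that $\Phi$ sends $H$-subextensions to left ideal two-sided coideals and $H$-normal extensions to Hopf ideals, while Lemma~\ref{prop:intermediate}\eqref{prop:intermediate:a} together with Proposition~\ref{prop:main}\eqref{prop:main:a} shows that $\Psi$ sends a left ideal two-sided coideal~$I$ to an $H$-subextension~$L^I$. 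If moreover $I$ is a Hopf ideal, then $I$ is also a right ideal, so Lemma~\ref{prop:intermediate}\eqref{prop:intermediate:b} yields $H$-stability of~$L^I$ and hence $H$-normality. Both maps visibly reverse inclusions.

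For $\Psi\circ \Phi=\id$, let $L_0$ be an $H$-subextension. The inclusion $L_0\subseteq L^{J(L_0)}$ is tautological from the definition of~$J(L_0)$. To obtain equality, I will compare $K$-dimensions. On the one hand, the $H$-subextension hypothesis gives the isomorphism $\can_0\colon L\otimes H/J(L_0)\xrightarrow{\sim}\Hom_K(L_0,L)$; extending scalars to $\tilde L$ yields $\dim_K H/J(L_0)=[L_0:K]$. On the other hand, Lemma~\ref{lem:main} applied to $I=J(L_0)$ gives $\tilde L\otimes H/J(L_0)\cong \Hom_K(L^{J(L_0)},\tilde L)$, whence $\dim_K H/J(L_0)=[L^{J(L_0)}:K]$. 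Comparing the two, $[L_0:K]=[L^{J(L_0)}:K]$, so the inclusion must be an equality.

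For $\Phi\circ \Psi=\id$, let $I$ be a left ideal two-sided coideal. The containment $I\subseteq J(L^I)$ is immediate. The key observation, which I expect to be the main technical point, is that the isomorphism of Lemma~\ref{lem:main} factors through $H/J(L^I)$: explicitly, both maps are induced by $x\otimes h\mapsto (y\mapsto x(h\cdot y))$, so one has a commutative diagram
\[
\tilde L\otimes H/I \twoheadrightarrow \tilde L\otimes H/J(L^I) \xrightarrow{\can_0\otimes_{L}\id_{\tilde L}} \Hom_K(L^I,\tilde L),
\]
in which the second arrow is an isomorphism since $L^I$ is an $H$-subextension by Proposition~\ref{prop:main}\eqref{prop:main:a} and the composite is the isomorphism of Lemma~\ref{lem:main}. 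Hence the first surjection must be injective, which forces $I=J(L^I)$.

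The restriction to the normal case is now a matter of bookkeeping: if $L_0$ is $H$-normal, then $J(L_0)$ is a Hopf ideal by Proposition~\ref{prop:HopfIdeal}\eqref{prop:HopfIdeal:c}; conversely, if $I$ is a Hopf ideal, then $L^I$ is $H$-stable by Lemma~\ref{prop:intermediate}\eqref{prop:intermediate:b} and an $H$-subextension by Proposition~\ref{prop:main}\eqref{prop:main:a}, hence $H$-normal. Combined with the already established bijection, this gives the second correspondence.
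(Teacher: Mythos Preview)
Your proof is correct and follows essentially the same route as the paper. The only cosmetic difference is packaging: for $\Phi\circ\Psi=\id$ the paper invokes Lemma~\ref{lem:extend-rationality} (whose conclusion is precisely $I=J(L^I)$), whereas you reprove its content inline via the factorization argument; and for $\Psi\circ\Phi=\id$ you make the dimension comparison explicit where the paper leaves it implicit in the juxtaposition of the two isomorphisms.
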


\begin{proof}[Proof of Theorem~\ref{theo:fin-cor}.]
We already know that the map $\Phi$ is well-defined by Proposition~\ref{prop:HopfIdeal}. On the other hand, for any left ideal two-sided coideal $I$ of $H$, we define $\Psi(I)=L^I$, and we know now by Proposition~\ref{prop:main} that $L^I$ is an $H$-subextension. If, in addition, $I$ is a right ideal, then $L^I$ is also $H$-stable (by Lemma~\ref{prop:intermediate}) and, thus, $H$-normal. This shows that $\Psi$ is well-defined as well.

Furthermore, Proposition~\ref{prop:main} and Lemma~\ref{lem:extend-rationality} tell that $\Phi\circ \Psi(I) = J(L^I)=I$, which provides the first half of the correspondence.

For the other half of the correspondence, let $L_0$ be an $H$-subextension. We clearly have the inclusion $L_0 \subseteq L^{J(L_0)}$.
Moreover, by the definition of $H$-subextensions, the natural map $L \otimes H/J(L_0) \to \Hom_K(L_0,L)$ is an isomorphism,
and on the other hand by Lemma~\ref{lem:main}, $\tilde L \otimes H/J(L_0) \to \Hom_K(L^{J(L_0)},\tilde L)$ is also an isomorphism. We hence conclude that $L_0=L^{J(L_0)}$.
\end{proof}

Finally, let us rephrase our correspondence in purely group-theoretical terms via Greither-Pareigis theory.

\begin{cor}\label{co:correspGP}
Combining Theorem~\ref{theo:fin-cor} with Lemma~\ref{H/I=N/V}, we obtain also a bijective correspondence as follows
\[
\xymatrix@R0.5cm{
\big\{L/L_0/K \st L_0 \t{ $H$-subextension }\big\} \ar@<.5ex>[r]^-{\Phi'} & \big\{V \subset N \st V \t{ is a $G$-equivariant subgroup}\big\} \ar@<.5ex>[l]^-{\Psi'} 
}\]
restricting to another bijective correspondence
\[
\xymatrix@R0.5cm{
\big\{L/L_0/K \st L_0 \t{ $H$-normal }\big\} \ar@<.5ex>[r]^-{\Phi'} 
& \big\{V \subset N \st V \t{ is a $G$-equivariant normal subgroup}\big\} 
\ar@<.5ex>[l]^-{\Psi'} \\
}\]
As in Theorem~\ref{th:GP}\eqref{GP:g} the $G$-equivariance of the subgroup can also be rephrased as being normalized by $\lambda(G)$ in $\Perm(G/G')$.
\end{cor}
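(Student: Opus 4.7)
The plan is to define $\Phi'$ and $\Psi'$ as formal composites of the two bijections already established. Writing $\Phi_{\t{th}}, \Psi_{\t{th}}$ for the maps in Theorem~\ref{theo:fin-cor} and $\varphi, \varphi^{-1}$ for the bijection of Lemma~\ref{H/I=N/V} between left ideals two-sided coideals $I \subseteq H$ and $G$-equivariant subgroups $V \subseteq N$, I would set $\Phi' = \varphi \circ \Phi_{\t{th}}$ and $\Psi' = \Psi_{\t{th}} \circ \varphi^{-1}$. Concretely, to an $H$-subextension $L_0$ one associates the $G$-equivariant subgroup $V \subseteq N$ such that $\tilde L \otimes H/J(L_0) \cong \tilde L[N/V]$ as left $\tilde L \otimes H$-module $\tilde L$-coalgebras; conversely, to a $G$-equivariant subgroup $V \subseteq N$ one associates the fixed field $L^I$, where $I$ is the left ideal two-sided coideal corresponding to $V$.

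Next, I would verify bijectivity: since both $(\Phi_{\t{th}}, \Psi_{\t{th}})$ and $(\varphi, \varphi^{-1})$ are mutually inverse bijections, so are their composites $(\Phi', \Psi')$. The inclusion-reversing property of $\Phi'$ and $\Psi'$ also follows immediately, because $\Phi_{\t{th}}, \Psi_{\t{th}}$ are inclusion-reversing while $\varphi, \varphi^{-1}$ are inclusion-preserving (an ideal $I$ shrinks precisely when the corresponding subgroup $V$ shrinks, since $V$ arises as the grouplikes of $\tilde L \otimes I$ inside $\tilde L[N]$).

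For the restriction to normal subextensions, I would invoke the two ``moreover'' statements: Theorem~\ref{theo:fin-cor} restricts to a bijection between $H$-normal subextensions and Hopf ideals, while Lemma~\ref{H/I=N/V} states that a left ideal two-sided coideal $I \subseteq H$ is a Hopf ideal if and only if the corresponding $G$-equivariant subgroup $V \subseteq N$ is normal. Composing yields the claimed restriction. Finally, the alternative description of $G$-equivariance as normalization by $\lambda(G)$ in $\Perm(G/G')$ is nothing other than Theorem~\ref{th:GP}\eqref{GP:g}, which we may cite directly.

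Since the entire statement is obtained by splicing together two bijections whose proofs have already been carried out, I do not anticipate any genuine obstacle; the only points requiring care are to check that the restriction of a composite bijection is again the composite of the restrictions (immediate from $\varphi$ sending Hopf ideals to normal $G$-equivariant subgroups) and to state the correspondence $\Phi', \Psi'$ explicitly enough that the reader can recognize it as the composite of the two maps already in hand.
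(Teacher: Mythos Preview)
Your proposal is correct and matches the paper's approach exactly: the corollary is stated without a separate proof in the paper, since it is immediate from composing the bijection of Theorem~\ref{theo:fin-cor} with that of Lemma~\ref{H/I=N/V}, and the normal case follows from the respective ``moreover'' clauses. Your parenthetical description of how $V$ arises is slightly imprecise (the kernel $\tilde L\otimes I$ is not literally $\tilde L[V]$ but rather the span of differences $gn-gn'$ as in Proposition~\ref{subquot}\eqref{subquot:d}), though this does not affect the argument since inclusion-preservation of $I\mapsto V$ follows directly from the characterising isomorphism $\tilde L\otimes H/I\cong \tilde L[N/V]$.
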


\subsection{Hopf-Galois theory for subextensions}\label{HGtheory}

In classical Galois theory, any intermediate field $L_0$ of a Galois extension $L/K$ leads to a Galois extension $L/L_0$. Our next result shows that the same property holds for an $H$-subextension of an $H$-Galois extension.

\begin{proposition}\label{prop:subs}
Let $L/K$ be a finite $H$-Galois extension and let $L_0$ be an $H$-subextension. Consider the Hopf subalgebra $H_0=\Psi'(L_0)$ as constructed in Corollary~\ref{cor:Phi'}. Denote as before $\alpha:H\to \End_K(L)$ the map associated to the action of $H$ on $L$. Then the following statements hold.
\begin{enumerate}[(a)]
\item \label{prop:subs:a} $L \otimes_K H_0 = \{ \sum_i x_i \otimes h_i \in L \otimes_K H \;|\; \can(\sum_i x_i \otimes h_i) \in \End_{L_0}(L)\}$.
\item \label{prop:subs:b} $L/L_0$ is $L_0\ot H_0$-Galois, i.e.\ the map $\can_{L/L_0}:L\ot H_0\to \End_{L_0}(L)$ is bijective.
\item \label{prop:subs:c} If $L_0/K$ is moreover $H$-normal, then $L_0/K$ is also $H/J(L_0)$-Galois.
\end{enumerate}
\end{proposition}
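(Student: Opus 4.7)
\emph{Proof plan.} The easy inclusion in (a) is immediate from the description
\[H_0=\Phi'(L_0)=\{h\in H \st \alpha(h)\in \End_{L_0}(L)\}\]
furnished by Corollary~\ref{cor:Phi'}: for $h\in H_0$, $\can(x\ot h)(y)=x\alpha(h)(y)$ is $L_0$-linear in $y$, so $\can(L\ot H_0)\subseteq \End_{L_0}(L)$. This both shows that $\can_{L/L_0}$ is well-defined and yields its injectivity as a restriction of the isomorphism $\can:L\ot H\to \End_K(L)$. Once the surjectivity of $\can_{L/L_0}$ (statement~(b)) is established, the reverse inclusion in~(a) follows at once: any $\eta\in L\ot H$ with $\can(\eta)\in\End_{L_0}(L)$ must coincide with the unique $\eta'\in L\ot H_0$ satisfying $\can_{L/L_0}(\eta')=\can(\eta)$, and so lies in $L\ot H_0$. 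Statement~(c) is already contained in Proposition~\ref{prop:HopfIdeal}\eqref{prop:HopfIdeal:c}. The whole proof thus reduces to the surjectivity of $\can_{L/L_0}:L\ot H_0\to \End_{L_0}(L)$.

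To prove this, I would base-change to the normal closure $\tilde L$ of $L/K$ (implicitly assuming, as in the rest of the section, that $L/K$ is separable) and combine Greither-Pareigis with faithfully flat descent. By Theorem~\ref{th:GP}\eqref{GP:a}, $\tilde L\ot H\cong \tilde L[N]$. Via Theorem~\ref{theo:subalgebra-ideal}, $H_0$ corresponds to the left ideal two-sided coideal $J(L_0)=HH_0^+$, which by Lemma~\ref{H/I=N/V} corresponds to a $G$-equivariant subgroup $V\subset N$ with $\tilde L\ot H_0\cong \tilde L[V]$ inside $\tilde L[N]$. By Lemma~\ref{lem:gp-act}\eqref{lem:gp-act:c}, the orbit map $\beta:N\to G/G'$ restricts to a bijection $V\to U/G'$, where $U\supseteq G'$ is the associated subgroup satisfying $L^U=L_0$ (Lemma~\ref{lem:main}).

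It remains to identify $\Hom_{L_0}(L,\tilde L)$ inside $\Hom_K(L,\tilde L)\cong \tilde L[G/G']$. Expanding $f=\sum_{hG'}a_{hG'}\,\sigma_{hG'}$ over the grouplike $\tilde L$-basis of $K$-algebra morphisms $\sigma_{hG'}:L\to \tilde L$, the condition $f(y'y)=y'f(y)$ for $y'\in L_0,\, y\in L$ rewrites as $\sum_{hG'}a_{hG'}\bigl(\sigma_{hG'}(y')-y'\bigr)\sigma_{hG'}(y)=0$, and the $K$-linear independence of the distinct $\sigma_{hG'}$ forces, for each $hG'$, either $a_{hG'}=0$ or $\sigma_{hG'}|_{L_0}=\iota_{L_0}$, i.e.\ $h\in U$. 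Hence $\Hom_{L_0}(L,\tilde L)=\tilde L[U/G']$. Since $\widetilde{\can}$ is the $\tilde L$-linear extension of $\beta$ by Theorem~\ref{th:GP}\eqref{GP:d}, its restriction to $\tilde L\ot H_0$ gives an isomorphism $\tilde L\ot H_0\cong \tilde L[V]\to \tilde L[U/G']=\Hom_{L_0}(L,\tilde L)$, which is precisely the base change of $\can_{L/L_0}$ along $L\to\tilde L$. Faithfully flat descent then yields that $\can_{L/L_0}$ itself is an isomorphism, proving~(b). The main technical point is securing the compatibility of the two Greither-Pareigis identifications — of $\tilde L\ot H_0$ with $\tilde L[V]$ and of $\Hom_{L_0}(L,\tilde L)$ with $\tilde L[U/G']$ — under the single bijection $\beta:V\to U/G'$; once this is in place, the descent step is routine.
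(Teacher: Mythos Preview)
Your argument is correct under the separability hypothesis you flag, but the paper takes a genuinely different and more elementary route that avoids both Greither--Pareigis theory and that extra hypothesis. The paper proves~(a) first, directly: given $\sum_i x_i\ot h_i$ with the $x_i$ linearly independent over~$K$ and $\can(\sum_i x_i\ot h_i)\in\End_{L_0}(L)$, expanding $L_0$-linearity and applying the bijectivity of $\can$ gives $\sum_i x_ix_0\ot h_i=\sum_i x_i(h_{i(1)}\cdot x_0)\ot h_{i(2)}$ for all $x_0\in L_0$; then the injectivity of $\can_0:L\ot H/J(L_0)\to\Hom_K(L_0,L)$ --- which is exactly the $H$-subextension hypothesis --- lets one pass to $L\ot H/J(L_0)\ot H$, and the $K$-independence of the $x_i$ forces each $h_i\in{}^{\mathrm{co}\,H/J(L_0)}H=H_0$. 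Part~(b) then drops out: injectivity of $\can_{L/L_0}$ is inherited from $\can$, and surjectivity is precisely the hard inclusion of~(a). Your approach reverses the logical order and trades this short computation for the structural machinery of Theorem~\ref{th:GP} and Lemmas~\ref{H/I=N/V}--\ref{lem:main}; this is perfectly valid in the separable case, but note that Proposition~\ref{prop:subs} as stated does \emph{not} assume separability, and the paper's proof genuinely works without it --- the $H$-subextension condition alone carries the weight.
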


\begin{proof}
\eqref{prop:subs:a}
If $x\ot h\in L\ot H_0$, then we find by Corollary~\ref{cor:Phi'} that $\can(x\ot h)=x\alpha(h)\in \End_{L_0}(L)$.
Conversely, take $\sum_i x_i \otimes h_i \in L \otimes_K H$ such that the elements $x_i$ are linearly independent over $K$ and  
$\can(\sum_i x_i \otimes h_i)$ is left $L_0$-linear. This means that for all $x_0\in L_0$ and all $x\in L$, the following equality holds
$$\sum_i x_ix_0 (h_i\cdot x)=\sum_i x_ih_i\cdot (x_0x) = \sum_i x_i(h_{i(1)}\cdot x_0)(h_{i(2)}\cdot x)$$
Using the bijectivity of $\can:L\ot H\to \End_K(L)$, this equality can be translated into
$$\sum_i x_ix_0 \ot h_i =  \sum_i x_i(h_{i(1)}\cdot x_0) \ot h_{i(2)}$$
which holds for all $x_0\in L_0$. Since $L_0$ is an $H$-subextension, the map $\can_0:L\ot H/J(L_0)\to \Hom_K(L_0,L)$ is injective. Hence the previous equality is furthermore equivalent to
$$\sum_i x_i\ot \pi(1) \ot h_i =  \sum_i x_i \ot \pi(h_{i(1)}) \ot h_{i(2)}\in L\ot H/J(L_0)\ot H$$
where we denote $\pi:H\to H/J(L_0)$ for the canonical surjection. Since we assumed that the elements $x_i$ are linearly independent, we find that $\pi(1) \ot h_i = \pi(h_{i(1)}) \ot h_{i(2)}$. Hence we find by Theorem~\ref{theo:subalgebra-ideal} that $h_i\in H_0$.\\
\eqref{prop:subs:b}
Consider the following commutative diagram
\[
\xymatrix{
L\ot H \ar[rr]^-\can_\sim && \End_K(L)\\
L\ot H_0 \ar@{^(->}[u] \ar[r]^-\cong &  L\ot_{L_0} (L_0\ot H_0) \ar[r]^-{\can_{L/L_0}} & \End_{L_0}(L) \ar@{^(->}[u]
}
\]
Since $\can$ is bijective, $\can_{L/L_0}$ is also injective. It is surjective by part~\eqref{prop:subs:a}. Hence $L/L_0$ is $L_0\ot H$-Galois.\\
\eqref{prop:subs:c} This was already proved as Proposition~\ref{prop:HopfIdeal}\eqref{prop:HopfIdeal:c}.
\end{proof}

Next we treat the passage to intersections and composita. We see that it is very convenient to have the possibility to work with either ideals or subalgebras.

\begin{lem}\label{lem:interm}
Let $L/K$ be a finite field extension and let $K \subseteq L_i\subseteq L$ for $i=1,2$ be intermediate fields.
\begin{enumerate}[(a)]
\item \label{lem:interm:a} $\End_{L_1L_2}(L)=\End_{L_1}(L)\cap \End_{L_2}(L)$.
\item \label{lem:interm:b} If $\End_{L_1}(L) = \End_{L_2}(L)$, then $L_1=L_2$.
\end{enumerate}
\end{lem}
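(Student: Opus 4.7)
For part (a), the inclusion $\End_{L_1L_2}(L)\subseteq \End_{L_1}(L)\cap \End_{L_2}(L)$ is immediate since $L_1,L_2\subseteq L_1L_2$. For the reverse inclusion, I would fix $\phi\in \End_{L_1}(L)\cap \End_{L_2}(L)$ and consider the subset
\[ S_\phi := \{x\in L \st \phi(xy)=x\phi(y) \text{ for all } y\in L\}. \]
A direct check (using linearity for sums and the associativity $\phi((x_1x_2)y)=\phi(x_1(x_2y))=x_1\phi(x_2y)=x_1x_2\phi(y)$ for products) shows that $S_\phi$ is a $K$-subalgebra of~$L$. By hypothesis it contains both $L_1$ and $L_2$. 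Since $L/K$ is finite, $S_\phi$ is a finite-dimensional $K$-subalgebra of the field~$L$, hence itself a field. Thus $S_\phi$ contains the compositum $L_1L_2$, which is exactly the statement that $\phi$ is $L_1L_2$-linear.

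For part (b), I would combine part (a) with a dimension count. Recall that for any intermediate extension $K\subseteq L_0\subseteq L$, one has $\dim_K \End_{L_0}(L)=[L:K]\cdot [L:L_0]$ (since $\dim_{L_0}\End_{L_0}(L)=[L:L_0]^2$). Applying (a) with the hypothesis $\End_{L_1}(L)=\End_{L_2}(L)$ gives
\[ \End_{L_1L_2}(L)=\End_{L_1}(L)\cap \End_{L_2}(L)=\End_{L_1}(L), \]
so comparing $K$-dimensions yields $[L:L_1L_2]=[L:L_1]$. Since $L_1\subseteq L_1L_2$, this forces $L_1L_2=L_1$, hence $L_2\subseteq L_1$. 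The symmetric argument yields $L_1\subseteq L_2$, and therefore $L_1=L_2$.

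There is no real obstacle here: the only mild subtlety is justifying that $S_\phi$ is a subfield (rather than just a subring), which uses only that a finite-dimensional integral domain over a field is itself a field. Everything else is bookkeeping with dimensions of endomorphism algebras.
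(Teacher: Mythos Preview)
Your proof is correct and follows essentially the same approach as the paper. For part~(a) the paper simply asserts that $L_1$- and $L_2$-linearity together are equivalent to $L_1L_2$-linearity, whereas you spell this out via the subfield $S_\phi$; for part~(b) both you and the paper apply part~(a) to obtain $\End_{L_1L_2}(L)=\End_{L_1}(L)=\End_{L_2}(L)$ and then compare $K$-dimensions using $\dim_K\End_{L_0}(L)=[L:K][L:L_0]$ to conclude $L_1=L_2=L_1L_2$.
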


\begin{proof}
\eqref{lem:interm:a}
Clearly, an endomorphism of $L$ is both $L_1$- and $L_2$-linear if and only if it is $L_1L_2$-linear. \\
\eqref{lem:interm:b}
By part~\eqref{lem:interm:a} we find that $\End_{L_1}(L) = \End_{L_2}(L)=\End_{L_1L_2}(L)$. Furthermore, we have $\dim_K(\End_{L'}(L))=[L:K][L:L']$ for any intermediate field $K\subset L'\subset L$. Hence it follows that $L_1$, $L_2$ and $L_1L_2$ have the same degree over $K$. Since obviously $L_i\subset L_1L_2$ for $i=1,2$, it follows that $L_1=L_2=L_1L_2$.
\end{proof}

\begin{prop}\label{prop:intercomp}
Let $L/K$ be a separable finite $H$-Galois extension. Let $L_1 = L^{I_1} = L^{H_1}$ and $L_2 = L^{I_2} = L^{H_2}$ be $H$-subextensions corresponding to left ideals two-sided coideals $I_1$, $I_2$ and Hopf-subalgebras $H_1$, $H_2$, respectively.
\begin{enumerate}[(a)]
\item \label{prop:intercomp:a} $L_1 L_2$ is an $H$-subextension and $L_1 L_2 = L^{H_1 \cap H_2}$.
If $L_1$ and $L_2$ are $H$-normal, then so is $L_1 L_2$.
\item \label{prop:intercomp:b} $L_1 \cap L_2$ is an $H$-subextension and $L_1 \cap L_2 = L^{I_1 + I_2}$.
If $L_1$ and $L_2$ are $H$-normal, then so is $L_1 \cap L_2$.
\end{enumerate}
\end{prop}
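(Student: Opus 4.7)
The plan is to invoke the Galois correspondence of Theorem~\ref{theo:fin-cor}, together with its reformulation via Hopf subalgebras (Theorem~\ref{theo:subalgebra-ideal} and Corollary~\ref{cor:subalgebra-ideal}), and to exploit the elementary closure properties of Lemma~\ref{lem:sub-basics}. The guiding philosophy is that sums of left ideal two-sided coideals correspond to intersections of fixed fields, while intersections of Hopf subalgebras correspond to composita; so part~(b) is naturally handled on the ideal side and part~(a) on the Hopf subalgebra side.

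For part~(b), by Lemma~\ref{lem:sub-basics}(a) the sum $I_1+I_2$ is again a left ideal two-sided coideal, so Proposition~\ref{prop:main}(a) already tells us that $L^{I_1+I_2}$ is an $H$-subextension. I would then verify directly that $L^{I_1+I_2}=L^{I_1}\cap L^{I_2}=L_1\cap L_2$: the inclusion $\supseteq$ is immediate since $(h_1+h_2)\cdot x = h_1\cdot x + h_2 \cdot x = 0$ whenever $x\in L_1\cap L_2$, and the reverse inclusion follows by specialising to pure summands (taking $h_2=0$ or $h_1=0$). The normality statement then follows because if $I_1,I_2$ are Hopf ideals, so is $I_1+I_2$ by Lemma~\ref{lem:sub-basics}(b), whence $L_1\cap L_2$ is $H$-normal.

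For part~(a), by Lemma~\ref{lem:sub-basics}(c) the intersection $H_1\cap H_2$ is a Hopf subalgebra, which under the correspondence of Corollary~\ref{cor:Phi'} gives rise to the $H$-subextension $L^{H_1\cap H_2}$. Since the correspondence is inclusion-reversing and $H_1\cap H_2\subseteq H_i$, we obtain $L_i\subseteq L^{H_1\cap H_2}$ for $i=1,2$, hence $L_1L_2\subseteq L^{H_1\cap H_2}$. For the reverse inclusion, I would invoke Lemma~\ref{lem:interm}(a) together with the identification of $L\otimes H_i$ with $\End_{L_i}(L)$ from Proposition~\ref{prop:subs}(a) to compute
\[ \End_{L_1L_2}(L) = \End_{L_1}(L)\cap \End_{L_2}(L) = \can(L\otimes H_1)\cap \can(L\otimes H_2) = \can(L\otimes (H_1\cap H_2)) = \End_{L^{H_1\cap H_2}}(L). \]
Lemma~\ref{lem:interm}(b) then forces $L_1L_2=L^{H_1\cap H_2}$; in particular, $L_1L_2$ is automatically an $H$-subextension. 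For normality, if $H_1,H_2$ are normal Hopf subalgebras then so is $H_1\cap H_2$ by Lemma~\ref{lem:sub-basics}(d), so $L_1L_2$ is $H$-normal.

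The main obstacle is precisely showing that the compositum $L_1L_2$ is an $H$-subextension at all: the Galois correspondence gives us an $H$-subextension $L^{H_1\cap H_2}$ containing both $L_1$ and $L_2$, but a priori there is no reason the compositum should coincide with this $H$-subextension rather than being a strictly smaller intermediate field. The trick to bypass this difficulty is to pass to the endomorphism rings and use that a Hopf subalgebra $H_0\subseteq H$ cuts out, via $\can$, exactly the $L_0$-linear endomorphisms of $L$ for $L_0=L^{H_0}$ (Proposition~\ref{prop:subs}(a)), so that $L_1L_2$ and $L^{H_1\cap H_2}$ have the same endomorphism ring on $L$ and hence coincide by Lemma~\ref{lem:interm}(b).
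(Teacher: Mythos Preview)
Your proof is correct. For part~(a) you follow essentially the same route as the paper: identify $L\otimes H_i$ with $\End_{L_i}(L)$ via Proposition~\ref{prop:subs}(a), intersect, and conclude $L_1L_2=L^{H_1\cap H_2}$ by Lemma~\ref{lem:interm}. (The implicit step $(L\otimes H_1)\cap(L\otimes H_2)=L\otimes(H_1\cap H_2)$ holds because $K$ is a field; the paper uses it tacitly as well.) Your normality argument via Lemma~\ref{lem:sub-basics}(d) is a valid alternative to the paper's direct observation that $L_1L_2$ is $H$-stable whenever $L_1$ and $L_2$ are.

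For part~(b) your argument is genuinely simpler than the paper's. The paper sets up a pushout diagram in $\Vect_K$, comparing the pushout of $\Hom_K(L_i,L)$ over $\End_K(L)$ with the pushout of $L\otimes H/I_i$ over $L\otimes H$, and concludes that $\can_{1,2}:L\otimes H/(I_1+I_2)\to\Hom_K(L_1\cap L_2,L)$ is an isomorphism; it then factors through $\Hom_K(L^{I_1+I_2},L)$ to obtain the equality. You bypass all of this by the one-line verification $L^{I_1+I_2}=L^{I_1}\cap L^{I_2}$ directly from the definition of invariants (using $\epsilon(I_i)=0$), which is entirely sufficient once Proposition~\ref{prop:main}(a) has established that $L^{I_1+I_2}$ is an $H$-subextension. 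The paper's diagrammatic approach has the virtue of being parallel to its treatment of~(a) and of exhibiting the isomorphism $L\otimes H/(I_1+I_2)\cong\Hom_K(L_1\cap L_2,L)$ explicitly, but your elementary route is cleaner for the statement as given.
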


\begin{proof}
First remark that by Lemma~\ref{lem:sub-basics} $I_1+I_2$ and $H_1 \cap H_2$ are of the right type.\\
\eqref{prop:intercomp:a}
By Proposition~\ref{prop:subs}, we have isomorphisms
$$\can_{L/L_i}: L \otimes_K H_i\to \End_{L_i}(L)$$
for $i=1,2$.
Taking intersections on both sides and using Lemma~\ref{lem:interm} we obtain a canonical isomorphism
$$L \otimes_K (H_1 \cap H_2)\to \End_{L_1L_2}(L).$$
As $H_1 \cap H_2$ is a Hopf-subalgebra, it corresponds  to a unique $H$-subextension $L_3=L^{H_1\cap H_2}$ with canonical isomorphism $L \otimes_K (H_1 \cap H_2) \cong \End_{L_3}(L)$. Lemma~\ref{lem:interm}\eqref{lem:interm:b} implies $L_3 = L_1 L_2$, concluding the statement about composita.
Since we deal with $H$-subextensions, the $H$-normality reduces to $H$-stability, which is clear for $L_1L_2$.\\
\eqref{prop:intercomp:b}
Consider the following commutative diagram.
\[
\xymatrix{
&& L\ot H \ar[d]^{\can}_\sim \ar@{->>}[ddll] \ar@{->>}[ddrr] \\
&& \End_K(L) \ar@{->>}[dl] \ar@{->>}[dr] \\
L\ot H/I_1 \ar[r]^-{\can_1}_-\sim \ar@{->>}[ddrr] &\Hom_K(L_1,L) \ar@{->>}[dr] && \Hom_K(L_2,L) \ar@{->>}[dl] 
& L\ot H/I_2 \ar[l]_-{\can_2}^-\sim \ar@{->>}[ddll]
\\
&& \Hom_K(L_1\cap L_2,L)\\
&& L\ot H/(I_1+I_2) \ar[u]_{\can_{1,2}}
}
\]
The inner part of this diagram is a pushout square (in $\Vect_K$), since it arises by applying the contravariant functor $\Hom_K(-,L)$ to the diagram which expresses the intersection $L_1\cap L_2$ as the pullback of the inclusions $L_i\to L$.
The outer part of the diagram is also a pushout square. 
Since $\can$, $\can_1$ and $\can_2$ are isomorphisms, it follows that $\can_{1,2}$ is also an isomorphism.
Moreover, as $L_1\cap L_2\subset L^{I_1+I_2}$, we have that $\can_{1,2}$ factors through the canonical map $L\ot H/(I_1+I_2)\to \Hom_K(L^{I_1+I_2},L)$, which is an isomorphism since $L^{I_1+I_2}$ is an $H$-subextension by Proposition~\ref{prop:main}. It follows that $\Hom_K(L^{I_1+I_2},L) \cong \Hom_K(L_1\cap L_2,L)$ and therefore $L_1\cap L_2=L^{I_1+I_2}$ is an $H$-subextension.
\end{proof}

\section{Infinite Hopf-Galois Theory}

\subsection{Proartinian Hopf algebras}\label{sec:proartinian}

In this section, we present the underlying algebraic language in which we formulate our infinite Hopf-Galois correspondence.
Let $K$ be any field, equipped with the discrete topology. Let $\Vect_K$ be the category of finite dimensional $K$-vector spaces.
We now extend it by introducing proartinian $K$-vector spaces.
First we recall that a topological $K$-vector space $V$ is a $K$-vector space equipped with a topology with respect to which both addition and scalar multiplication are continuous.

\begin{defi}\label{defi:proartinian}
A {\em proartinian $K$-vector space} is a topological $K$-vector space such that there exists a family $\Lambda_V$ of open subspaces $U\subseteq V$ which form a base of open neighbourhoods of $0$ such that the following two statements hold:
\begin{enumerate}[(1)]
\item\label{defi:proartinian:1} the quotient $V/U$ is a finite dimensional discrete $K$-vector space for each $U \in \Lambda_V$,
\item\label{defi:proartinian:2} the natural $K$-linear map $\pi^V: V \to\plim{U \in \Lambda_V} V/U$, induced by all projection maps $\pi^V_U=\pi_U:V\to V/U$, is bijective.
\end{enumerate}
A morphism between proartinian $K$-vector spaces is a continuous $K$-linear map.
The category of proartinian $K$-vector spaces is denoted $\ProVect_K$.
\end{defi}

The category $\ProVect_K$ enjoys the following further properties (which can also be deduced, for instance, from \cite[I,\S3]{Fontaine}, \cite{Takeuchi85}).

\begin{lem}\label{lem:proartinian}
Let $V$ be a proartinian $K$-vector space.
\begin{enumerate}[(a)]
\item\label{lem:proartinian:a}
For every open subspace $W \subseteq V$, the quotient $V/W$ is a finite dimensional vector space with the discrete topology, the projection $\pi_W: V \to V/W$ is continuous, and $W$ is closed.
\item\label{lem:proartinian:b}
If one endows the projective limit $\plim{U \in \Lambda_V} V/U$ with the coarsest topology for which all projection maps are continuous, the linear isomorphism $\pi^V$ of condition \eqref{defi:proartinian:2} becomes a homeomorphism. We will henceforth often identify $V$ with this projective limit along this isomorphism.
\item\label{lem:proartinian:c}
The set $\bigcap_{U \in \Lambda_V} U = \{0\}$ is closed and the morphisms $\pi_{U}$ are jointly monic.
\end{enumerate}
\end{lem}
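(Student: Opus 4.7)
The plan is to address the three statements sequentially, with part~(a) as the foundation.

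For part~(a), I would start from an arbitrary open subspace $W \subseteq V$. Since $W$ is an open neighbourhood of $0$ and $\Lambda_V$ is a base of such neighbourhoods, there exists $U \in \Lambda_V$ with $U \subseteq W$. Then $V/W$ is naturally a quotient of $V/U$, which is finite dimensional by hypothesis, so $V/W$ is finite dimensional as well. To see that $V/W$ carries the discrete topology, observe that each coset $v + W$ is open in $V$, since translation is a homeomorphism in a topological vector space, so its image $\{v+W\}$ is open in the quotient topology and every singleton of $V/W$ is open. Continuity of $\pi_W$ is then immediate, as the preimage of a singleton is exactly a coset of $W$. Finally, $W = \pi_W^{-1}(\{0\})$ is the preimage of a point in a Hausdorff (indeed discrete) space under a continuous map, hence closed.

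For part~(b), continuity of $\pi^V$ will follow from the universal property of the projective limit: the target carries the coarsest topology making all projections $p_U : \plim{} V/U \to V/U$ continuous, so it suffices that $p_U \circ \pi^V = \pi_U$ be continuous for each $U \in \Lambda_V$, which is exactly part~(a). For the inverse, the key computation is that $\pi^V(U) = p_U^{-1}(\{0\})$ for any $U \in \Lambda_V$: indeed $\pi^V(v) \in p_U^{-1}(\{0\})$ means $v + U = 0 + U$ in $V/U$, i.e.\ $v \in U$. Because $\Lambda_V$ is filtered under inclusion (the intersection of two open subspaces is an open subspace, so by the base property lies above a member of $\Lambda_V$), the sets $\{p_U^{-1}(\{0\}) : U \in \Lambda_V\}$ form a base of open neighbourhoods of $0$ in the projective limit topology (using that each $V/U$ is discrete). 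Thus $\pi^V$ is a continuous linear bijection that sends a base of neighbourhoods of $0$ onto a base of neighbourhoods of $0$, and combined with translation invariance of both topologies this forces $\pi^V$ to be a homeomorphism.

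For part~(c), the injectivity of $\pi^V$, guaranteed by the definition of proartinian, gives $\bigcap_{U \in \Lambda_V} U = \ker(\pi^V) = \{0\}$. Each $U \in \Lambda_V$ is closed by part~(a), so $\{0\}$ is an intersection of closed sets and is therefore closed. The joint monicity of the family $\{\pi_U\}_{U \in \Lambda_V}$ is simply a reformulation: if $\pi_U(v) = \pi_U(w)$ for all $U$, then $\pi^V(v-w) = 0$, and $v = w$ follows from the injectivity of $\pi^V$.

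The only step that requires genuine attention is the openness of $\pi^V$ in part~(b); this in turn reduces to the identification $\pi^V(U) = p_U^{-1}(\{0\})$ together with the filtered behaviour of $\Lambda_V$ under finite intersections. The remaining assertions are direct bookkeeping from the definition of a proartinian $K$-vector space.
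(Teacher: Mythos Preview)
Your proof is correct and follows essentially the same approach as the paper's own proof; you simply spell out in more detail what the paper compresses (in particular, the paper's one-line argument for part~(b) is exactly your identification $\pi^V(U) = p_U^{-1}(\{0\})$ phrased as ``$\pi^V$ identifies $\Lambda_V$ with a base of open neighbourhoods of~$0$'').
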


\begin{proof}
\eqref{lem:proartinian:a}
As $W$ is open, we have $U \subseteq W$ for some $U \in \Lambda_V$ and the finite dimensionality of $V/W$ follows from the one of $V/U$ via the natural surjection $V/U \twoheadrightarrow V/W$. The discreteness in the quotient topology is clear. The other two statements follow from $\pi_W^{-1}(\{0\}) = W$, once viewing $\{0\}$ as an open set and once as a closed one.

\eqref{lem:proartinian:b}
By \eqref{lem:proartinian:a}, $\pi^V$ identifies $\Lambda_V$ with a base of open neighbourhoods of~$0$ of $\plim{U \in \Lambda_V} V/U$.

\eqref{lem:proartinian:c} The equality $\bigcap_{U \in \Lambda_V} U = \{0\}$ follows from the injectivity of $\pi^V$ and also means that the $\pi_U$ are jointly monic.
Further, as any open subspace $U \subseteq V$ is also closed by \eqref{lem:proartinian:a}, $\bigcap_{U \in \Lambda_V} U$ is indeed closed.
\end{proof}

\begin{prop}\label{prop:ProVect}
Let $V$ be a proartinian $K$-vector space and $W \subseteq V$ be a subspace.
\begin{enumerate}[(a)]
\item \label{prop:ProVect:a}
The canonical inclusion of $W$ in $V$ factors as in the following diagram of injective $K$-linear maps
\[ \xymatrix{ W \ar@{^(->}[d]^{\pi^W} \ar@{^(->}[rr] && V \ar[d]_\cong^{\pi^V} \\
\plim{U \in \Lambda_V} W/(U \cap W) \ar@{^(->}[rr] && \plim{U \in \Lambda_V} V/U }\]
\item \label{prop:ProVect:a2}
The topological closure of $W$, denoted $\widehat{W}$, is given as
\[ \widehat{W} = \bigcap_{U \in \Lambda_V} (U+W)  \cong \plim{U \in \Lambda_V} (U+W)/U \cong \plim{U \in \Lambda_V} W/(U \cap W).\]
Consequently, $\pi^W$ is a bijection if and only if $W$ is closed.
\item \label{prop:ProVect:b}
$W \subseteq V$ is a closed subspace if and only if the quotient $V/W$ is a proartinian $K$-vector space and the natural projection $\pi_W:V\to V/W$ is continuous. In this case the projection map is also an open $K$-linear map.
\end{enumerate}
\end{prop}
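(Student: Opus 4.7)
\textbf{Proof proposal for Proposition~\ref{prop:ProVect}.}

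For part~\eqref{prop:ProVect:a}, the plan is to construct each component of the diagram directly. For every $U\in\Lambda_V$ the composite $W\hookrightarrow V\twoheadrightarrow V/U$ has kernel $U\cap W$, so it induces a natural injection $W/(U\cap W)\hookrightarrow V/U$. These injections are compatible with the transition maps of the projective system (for $U'\subseteq U$), so taking the limit yields the bottom arrow $\plim{U}W/(U\cap W)\hookrightarrow\plim{U}V/U$, which is injective because limits of injective maps of vector spaces are injective. The left arrow $\pi^W$ is defined by the universal property of the limit from the projections $W\to W/(U\cap W)$; its kernel is contained in $\bigcap_U(U\cap W)\subseteq\bigcap_U U=\{0\}$ by Lemma~\ref{lem:proartinian}\eqref{lem:proartinian:c}, so $\pi^W$ is injective. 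Commutativity of the square is immediate from the defining property of $\pi^V$.

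For part~\eqref{prop:ProVect:a2}, I would first unwind the topological closure. Since $\{v+U\mid U\in\Lambda_V\}$ is a base of open neighbourhoods of $v$, we have $v\in\widehat W$ iff $(v+U)\cap W\neq\emptyset$ for all $U$, iff $v\in W+U$ for all $U$; this gives the first equality $\widehat W=\bigcap_U(U+W)$. The isomorphism $V\cong\plim{U}V/U$ now identifies this intersection with $\plim{U}(U+W)/U$: an element $v\in V$, corresponding to the coherent family $(\pi_U(v))_U$, lies in $W+U$ exactly when $\pi_U(v)\in(U+W)/U$. The standard isomorphism theorem $(U+W)/U\cong W/(U\cap W)$ is natural in $U$, so passing to the limit yields the last isomorphism. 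Finally, combining with part~\eqref{prop:ProVect:a} shows that $\pi^W$ is a bijection iff $W=\widehat W$, that is, iff $W$ is closed.

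For part~\eqref{prop:ProVect:b}, the ($\Leftarrow$) direction is easy: a proartinian space is Hausdorff (for instance because $\{0\}=\bigcap_{U\in\Lambda}U$ is an intersection of closed sets), so $\{0\}\subseteq V/W$ is closed and $W=\pi_W^{-1}(\{0\})$ is closed by continuity of $\pi_W$. For ($\Rightarrow$) assume $W$ is closed, equip $V/W$ with the quotient topology, and propose $\Lambda_{V/W}:=\{(U+W)/W\mid U\in\Lambda_V\}$ as the collection certifying that $V/W$ is proartinian. The map $\pi_W$ is open by the standard topological-group argument ($\pi_W^{-1}(\pi_W(O))=O+W$ is open whenever $O$ is), so the images $(U+W)/W$ are open and form a neighbourhood base of $0$ (any open neighbourhood of $0$ in $V/W$ pulls back to an open set containing $W$, which must contain some $U+W$). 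Each quotient $(V/W)/((U+W)/W)\cong V/(U+W)$ is a quotient of $V/U$, hence finite dimensional and discrete. The remaining point, checking that $V/W\to\plim{U}V/(U+W)$ is bijective, is the main technical obstacle. Injectivity follows because the kernel is $\widehat W/W=0$ by part~\eqref{prop:ProVect:a2}. For surjectivity I plan to use the short exact sequences
\[
0\longrightarrow W/(W\cap U)\longrightarrow V/U\longrightarrow V/(W+U)\longrightarrow 0
\]
of projective systems: the kernel system has surjective transition maps (so is Mittag-Leffler), hence applying $\plim{}$ gives a surjection $V\cong\plim{U}V/U\twoheadrightarrow\plim{U}V/(W+U)$, which factors through $V/W$ as required.
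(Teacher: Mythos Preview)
Your proof is correct and follows essentially the same route as the paper's: the diagram in \eqref{prop:ProVect:a} is built from the levelwise injections $W/(U\cap W)\hookrightarrow V/U$, the closure formula in \eqref{prop:ProVect:a2} is identified with $\bigcap_U(U+W)$ and then with the projective limit, and \eqref{prop:ProVect:b} is deduced from \eqref{prop:ProVect:a2} with the neighbourhood base $\{(U+W)/W\}$. The only noteworthy difference is that you justify the surjectivity of $V\to\plim{U}V/(U+W)$ explicitly via the Mittag--Leffler condition on the kernel system, whereas the paper simply asserts the isomorphism $V/W\cong\plim{U}V/(U+W)$; your treatment here is in fact more careful.
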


\begin{proof}
\eqref{prop:ProVect:a}
Since $W/(U \cap W)$ is a subspace of $V/U$ for every $U \in \Lambda_V$, the natural map $\plim{U \in \Lambda_V} W/(U \cap W) \to \plim{U \in \Lambda_V} V/U$ is injective. The injectivity of $\pi^W$ follows from Lemma~\ref{lem:proartinian}\eqref{lem:proartinian:c}.

\eqref{prop:ProVect:a2}
First observe that $\bigcap_{U \in \Lambda_V} (U+W)$ is closed as it is the intersection of open (and hence closed) subspaces $U+W$.
Let $W'$ be a closed subspace containing~$W$. We have to show that $W'$ contains $\bigcap_{U \in \Lambda_V} (U+W)$.
Ad absurdum, let $v \in  \bigcap_{U \in \Lambda_V} (U+W) \setminus W'$. Since $W'$ is closed, there is $U \in \Lambda_V$ such that $(v+U) \cap W' = \emptyset$.
However, our assumption implies that $v\in (U+W)\setminus W'$, so $v=u+w$ for some $u\in U$ and $w\in W$. Therefore $W'\supset W \ni w=v-u \in v+U$, which provides the required contradiction.
This shows $\widehat W=\bigcap_{U \in \Lambda_V} (U+W)$.

For $U,U' \in \Lambda_V$ with $U \subseteq U'$, we have the commutative diagram
\[
\xymatrix{
U+W  \ar@{->>}[r] \ar@{^(->}[d] & (U+W)/U \ar@{->>}[d] \\
U'+W \ar@{->>}[r]        & (U'+W)/U'.}
\]
The projective limit along $U \in \Lambda_V$ hence gives $\bigcap_{U \in \Lambda_V} (U+W) \twoheadrightarrow \plim{U \in \Lambda_V} (U+W)/U$, the kernel of which is immediately checked to be $\bigcap_{U \in \Lambda_V} U = \{0\}$, which shows the first stated isomorphism.

The last isomorphism is a consequence of the classical one $(U+W)/U \cong W/(U\cap W)$. The final statement follows then from the fact that composition of the inclusion $W\subset \widehat W$ with the stated isomorphisms is exactly the injective morphism $\pi^W$ from part~\eqref{prop:ProVect:a}.

\eqref{prop:ProVect:b}
If $W$ is closed, by part \eqref{prop:ProVect:a2}, we find that
\[ V/W = V/( \bigcap_{U \in \Lambda_V} (U+W) ) \cong \plim{U \in \Lambda_V} V/(U+W) \cong  \plim{U \in \Lambda_V} (V/W) / ((U+W)/W), \]
which shows that $V/W$ is indeed proartinian with base of open neighbourhoods of~$0$ given by $\pi_W(U) = (U+W)/W$ for $U \in \Lambda_V$, and that $\pi_W$ is an open map.

Conversely, if $V/W$ is proartinian, $\{0\}$ is closed, so $V/W\setminus\{0\}$ is open in $V/W$.
Since $\pi_W$ is continuous, $V\setminus W = \pi_W^{-1}(V/W\setminus\{0\})$ is open in $V$ and hence $W$ is closed in $V$.
\end{proof}

Proartinian $K$-vector spaces enjoy the following further properties.
\begin{prop}\label{prop:proartinian}
\begin{enumerate}[(a)]
\item\label{prop:proartinian:a} Any proartinian $K$-vector space is Hausdorff.
\item\label{prop:proartinian:b} The discrete proartinian $K$-vector spaces are exactly the finite dimensional $K$-vector spaces.
\item\label{prop:proartinian:c} A closed subspace $W \subseteq V$ is open if and only if it is of finite codimension.
\end{enumerate}
\end{prop}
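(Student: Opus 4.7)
My plan is to prove the three statements in order, exploiting Lemma~\ref{lem:proartinian} and Proposition~\ref{prop:ProVect} (the latter, for part (c) only). The proofs are essentially routine manipulations of the definitions once the right topological facts are invoked, so I do not anticipate a genuine obstacle; the only mild subtlety is making sure that when we use part (b) inside the proof of part (c), the quotient $V/W$ really is proartinian, which is exactly what Proposition~\ref{prop:ProVect}\eqref{prop:ProVect:b} supplies.

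For \eqref{prop:proartinian:a}, I will use the fact from Lemma~\ref{lem:proartinian}\eqref{lem:proartinian:c} that $\bigcap_{U \in \Lambda_V} U = \{0\}$. Given two distinct points $v, w \in V$, their difference $v - w$ is nonzero, hence not in some $U \in \Lambda_V$. Then the translates $v+U$ and $w+U$ are disjoint (if they met in $z$, one would have $v-w \in U-U = U$) and are open neighbourhoods of $v$ and $w$, respectively, since addition is continuous and $U$ is open. This gives the Hausdorff property.

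For \eqref{prop:proartinian:b}, one direction is immediate: any finite dimensional $K$-vector space is a proartinian $K$-vector space with respect to the discrete topology, taking $\Lambda_V=\{\{0\}\}$, which satisfies both conditions of Definition~\ref{defi:proartinian}. Conversely, if $V$ is discrete and proartinian, then $\{0\}$ is an open neighbourhood of $0$, so by the definition of $\Lambda_V$ as a base of such neighbourhoods, there exists $U \in \Lambda_V$ with $U \subseteq \{0\}$, i.e.\ $U=\{0\}$. The defining condition of $\Lambda_V$ then forces $V = V/U$ to be finite dimensional.

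For \eqref{prop:proartinian:c}, suppose $W$ is open; then since $\Lambda_V$ is a base of open neighbourhoods of $0$, some $U \in \Lambda_V$ is contained in $W$, and $V/W$ is a quotient of the finite dimensional space $V/U$, hence has finite codimension. Conversely, assume $W$ is closed of finite codimension. By Proposition~\ref{prop:ProVect}\eqref{prop:ProVect:b}, $V/W$ is proartinian, and by \eqref{prop:proartinian:b} a finite dimensional proartinian space is discrete, so $\{0\}$ is open in $V/W$. As $\pi_W$ is continuous, $W = \pi_W^{-1}(\{0\})$ is open in~$V$, completing the proof.
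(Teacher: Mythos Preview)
Your approach is essentially identical to the paper's, and parts \eqref{prop:proartinian:a} and \eqref{prop:proartinian:c} match almost verbatim. There is one small gap worth noting: in \eqref{prop:proartinian:b}, the ``immediate'' direction you prove is that a finite dimensional space \emph{admits} a discrete proartinian structure, whereas in \eqref{prop:proartinian:c} you invoke the stronger fact that \emph{any} proartinian structure on a finite dimensional space is discrete. The paper establishes this stronger form directly: since $\bigcap_{U \in \Lambda_V} U = \{0\}$ and $V$ is finite dimensional, the intersection can be refined to a finite one, so $\{0\}$ is open. You should replace your forward direction in \eqref{prop:proartinian:b} with this argument (or add it), so that the appeal in \eqref{prop:proartinian:c} is justified; with that adjustment the proof is complete and matches the paper's.
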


\begin{proof}
Let $V$ be a proartinian $K$-vector space.

\eqref{prop:proartinian:a}
Any two vectors $v,w \in V$, $v \neq w$ have the disjoint open neighbourhoods $v+U$ and $w+U$, respectively, for any open subspace $U \in \Lambda_V$ not containing $v-w$.

\eqref{prop:proartinian:b} Suppose $V$ is finite dimensional. Then the intersection $\bigcap_{U \in \Lambda_V} U = \{0\}$ of Lemma~\ref{lem:proartinian}\eqref{lem:proartinian:c} can be refined to a finite intersection of open sets, showing that $\{0\}$ is open, whence $V$ is discrete.
Conversely, if $V$ is discrete, then $\{0\}$ is an open subspace and $V \cong V/\{0\}$ is finite dimensional by Lemma~\ref{lem:proartinian}\eqref{lem:proartinian:a}.

\eqref{prop:proartinian:c} If $W \subseteq V$ is open, then $V/W$ is finite dimensional by Lemma~\ref{lem:proartinian}\eqref{lem:proartinian:a}.
Conversely, if $V/W$ is finite dimensional, then it is a discrete proartinian $K$-vector space and consequently $W = \pi_W^{-1}(\{0\})$ is open.
\end{proof}

For a $K$-linear map $V\to W$ between proartinian vector spaces, the continuity can be expressed as the following property. For any $U\in \Lambda_W$, there exists a $U'\in \Lambda_V$ such that $U' \subseteq \ker\pi_{U}\circ f$, which means that there exists a $K$-linear map $f_{U,U'}:V/{U'} \to W/U$ such that the following diagram commutes
\[
\xymatrix{
V \ar[rr]^-f \ar@{->>}[d]_{\pi_{U'}} && W \ar@{->>}[d]^-{\pi_{U}} \\
V/{U'} \ar[rr]^-{f_{U,U'}} && W/{U}.
}
\]
We note $f_{U,U'} \circ \pi_{U', U''} = f_{U,U''}$ for any $U'' \in \Lambda_V$ such that $U'' \subseteq U'$ and the natural projection $\pi_{U', U''}: V/U'' \twoheadrightarrow V/U'$.
Since the morphisms $\pi_{U}$ are jointly monic, the family of morphisms $f_{U,U'}$ completely determines the map $f$. Nevertheless, some caution is needed when one wants to express properties of the morphism $f$ in terms of the morphisms $f_{U,U'}$ defined on finite quotients. Indeed, the continuity of $f$ requires the existence of such morphisms $f_{U,U'}$, however the choice of these morphisms is not unique (nor canonical). Depending on the choices made, the morphisms on finite quotients do not always properly reflect the properties of the morphism $f$, as the following example shows.

\begin{ex}\label{ex:non-iso}
Denote by $K^\infty = \prod_{n \ge 1} K = \plim{n \ge 1} K^n$ taken with respect to the maps $\pi_{n,m} : K^m \to K^n$ for $m \ge n$, projecting onto the first $n$ components. Further let $\pi_n:=\pi_{n,\infty}:K^\infty\to K^n$ be the natural projection map.

Consider the identity map on $K^{\infty}$. To express the continuity of the identity map the most intuitive choice of morphisms on finite quotients would be to take $id_{K^n}$ and the following commutative diagrams:
$$\xymatrix{
K^\infty \ar[rr]^{id} \ar@{->>}[d]^{\pi_{n}} && K^\infty \ar@{->>}[d]^-{\pi_n} \\
K^{n} \ar[rr]^{id_{K^n}} && K^n
}$$
However, this is not the only possible choice. Indeed, one could also consider for any projection $\pi_n$ on the codomain, a projection $\pi_{n+1}$ on the domain together with the projection $\pi_{n,n+1}$ as morphism on finite quotients. This also expresses the continuity of the identity map by the commutativity of the following diagram:
$$\xymatrix{
K^\infty \ar[rr]^{id} \ar@{->>}[d]^{\pi_{n+1}} && K^\infty \ar@{->>}[d]^-{\pi_n} \\
K^{n+1} \ar[rr]^{\pi_{n,n+1}} && K^n
}$$
Nevertheless, for this choice, the morphisms $\pi_{n+1,n}$ are surjective but not injective, showing that depending of the choice, morphisms on finite quotients of an isomorphism are not necessarily isomorphisms themselves.
\end{ex}

It will be important for us that $\ProVect_K$ is equipped with the completed tensor product $\hot$ over~$K$, making it into a symmetric monoidal category. More precisely, given two proartinian $K$-vector spaces $V$ and $W$, one defines
\[V\hot W=\plim{(U_V,U_W)\in \Lambda_V\times \Lambda_W} V/U_V\otimes W/U_W.\]
For any pair $(U_V,U_W)\in \Lambda_V\times \Lambda_W$, we denote the corresponding projection map $V\hot W\to V/U_V\ot W/U_W$ by $\pi_{U_V}\hot \pi_{U_W}$.
Moreover, there is a canonical $K$-linear monomorphism $V\ot W \to V\hot W$.

We now put Hopf algebras and coalgebras into the context of proartinian vector spaces.

\begin{defi}\label{defi:infHopf}
Let $H \in \ProVect_K$.
We say that $H$ is a {\em proartinian Hopf algebra over~$K$} if there is a base $\Lambda_H$ of $0$ consisting of open subspaces $U$ of~$V$ such that for every $U \in \Lambda_H$, the quotient $V/U$ is a finite dimensional $K$-vector space equipped with the structure of a Hopf algebra over~$K$ and for all $U,V \in \Lambda_H$ with $U \subseteq V$ the natural maps $\pi_{V,U}: H/U \to H/V$ are morphisms of Hopf algebras over~$K$ with respect to the fixed Hopf algebra structures.

Similarly, we say that $H$ is a {\em proartinian coalgebra over~$K$} if there is a base $\Lambda_H$ of $0$ consisting of open subspaces $U$ of~$V$ such that for every $U \in \Lambda_H$, the quotient $V/U$ is a finite dimensional $K$-vector space equipped with the structure of a coalgebra over~$K$ and for all $U,V \in \Lambda_H$ with $U \subseteq V$ the natural maps $\pi_{V,U}: H/U \to H/V$ are morphisms of coalgebras over~$K$ with respect to the fixed coalgebra structures.

A morphism $f: H \to H'$ between proartinian Hopf algebras (resp.\ coalgebras) is a $K$-linear map such that for each $U' \in \Lambda_{H'}$, there exists $U \in \Lambda_H$  with $f(U)\subset U'$ such that the induced map $f_{U',U}:H/U\to H'/U'$, which satisfies $\pi_{U'}\circ f=f_{U',U} \circ \pi_{U}$, is a morphism of Hopf algebras (resp.\ of coalgebras). The assumption implies that a morphism is always continuous.
\end{defi}

\begin{ex}
Finite dimensional Hopf algebras (coalgebras) are precisely the discrete proartinian Hopf algebras (coalgebras). Remark that the projection morphisms $\pi_U:H\to H/U$, with $U\in\Lambda_H$ are morphism of proartinian Hopf algebras (coalgebras), with respect to this discrete topology on the quotients.
\end{ex}

\begin{ex}
Let $G$ be a profinite group and $\Omega_G$ be a system of open neighbourhoods of the identity element of~$G$ consisting of normal subgroups.
Then the {\em completed group ring}
\[ K \llbracket G\rrbracket = \plim{N \in \Omega_G} K[G/N] \]
is a proartinian Hopf algebra over~$K$.
\end{ex}

Proartinian coalgebras and proartinian Hopf algebras $H$ inherit additional structure from the structures making each of the $H/U$ a coalgebra or a Hopf algebra, respectively.
For instance, in the case of a proartinian Hopf algebra, we obtain a $K$-linear multiplication on~$H$, which is obtained from the universal property of $H$ as a projective limit from the multiplications $m_U$ on each $H/U$, by means of the following commutative diagram for any $U \in \Lambda_H$:
\[
\xymatrix{
H\ot H \ar[rr]^-m \ar@{->>}[d]_{\pi_U\ot \pi_U} && H \ar@{->>}[d]^-{\pi_U}\\
H/U\ot H/U \ar[rr]^-{m_U} && H/U.
}
\]
This multiplication map turns $H$ into a (usual) $K$-algebra and moreover, the commutativity of the above diagrams implies that the multiplication map is continuous.
Further, since the multiplication map $m$ is continuous, it can be extended to a morphism with the completed tensor product as domain:
\[ \widehat m: H \hot H \to H.\]
The situation of the coalgebra structure is slightly more complicated. This time, one uses the universal property of the completed tensor product $H\hot H$ as a projective limit to define a comultiplication on $H$ that makes the following diagram commute for all $U,V,W \in \Lambda_H$ such that $U \subseteq V$ and $U \subseteq W$:
\begin{equation}\label{Deltacontinuous}
\xymatrix{
H \ar[rrr]^-\Delta \ar@{->>}[d]_-{\pi_U} &&& H \hot H \ar@{->>}[d]^-{\pi_V\hot \pi_W}\\
H/U \ar[r]^-{\Delta_U} & H/U\ot H/U \ar[rr]^-{\pi_{V,U}\ot \pi_{W,U}} && H/V\ot H/W.
}\end{equation}
Remark, however, that in this case and unless $H$ is discrete, $H$ is not a coalgebra (or Hopf algebra) in the classical sense because of the necessity to work with the completed tensor product. Rather, $H$ is a topological coalgebra in the sense of \cite{Takeuchi85}. If all $H/U$ are cocommutative, $H$ will be cocommutative as well, by definition.

Similarly, a unit morphism $u:K\to H$, a counit $\epsilon: H \to k$ and an antipode $S:H\to H$ can be constructed by taking a projective limit of the structure maps of all $H/U$ for $U \in \Lambda_H$:
\begin{equation}\label{uepsilonScontinuous}
\xymatrix{
K \ar@{=}[d] \ar[rr]^-u && H \ar@{->>}[d]^{\pi_U} \\
K \ar[rr]^{u_{U}} && H/U
} \qquad
\xymatrix{
 H \ar[rr]^-\epsilon \ar@{->>}[d]^{\pi_U} && K \ar@{=}[d] \\
H/U \ar[rr]^-{\epsilon_U} && K
} \qquad
\xymatrix{
 H \ar[rr]^-S \ar@{->>}[d]^{\pi_U} && H \ar@{->>}[d]^{\pi_U} \\
H/U \ar[rr]^-{S_U} && H/U
}
\end{equation}
These structure maps then satisfy the expected coalgebra or Hopf-algebra axioms (see, for instance, \cite[p.~36]{Fontaine}), which implies that $H$ is a coalgebra or Hopf algebra in the monoidal category of proartinian vector spaces (with completed tensor product).

The projective limit $H = \plim{U \in \Lambda_H} H/U$ in $\Vect_K$ differs from (in fact, is strictly bigger than) the projective limit computed in the category of all (cocommutative) Hopf algebras. More precisly, the projective limit in the category of cocommutative Hopf algebras is exactly the biggest subspace of $H$ for which the comultiplication $\Delta:H\to H\hot H$ factors through the usual tensor product $H\ot H$.

Since we suppose all the Hopf algebras $H_U := H/U$ for $U \in \Lambda_H$ to be finite dimensional, their linear dual is again a Hopf algebra and this gives rise to an inductive system of Hopf algebras $(H^*_U,\pi^*_{V,U})$. The inductive limit of this system in the category of Hopf algebras coincides with the one in the category of $K$-vector spaces, and hence provides a (usual) Hopf algebra $H^\dagger=\ilim{U \in \Lambda_H} H^*_U$. Using an argument based on adjunctions, one can easily verify that the Sweedler dual of $H^\dagger$ is exactly the projective limit of the system $(H_U,\pi_{V,U})$ computed in the category of Hopf algebras, and the full linear dual of $H^\dagger$ is isomorphic to $H$ (as $K$-vector space).

\begin{lem}\label{lem:isoartcoalg}
Let $H$ and $H'$ be proartinian $K$-coalgebras and $f:H\to H'$ a $K$-linear map. 
\begin{enumerate}[(a)]
\item \label{lem:isoartcoalg:a}
Then the following two conditions are equivalent:
\begin{enumerate}[(i)]
\item \label{lem:isoartcoalg:a:ii} $f$ is a morphism of proartinian coalgebras;
\item \label{lem:isoartcoalg:a:i} $f$ is continuous and $\Delta_{H'}\circ f=(f\hot f)\circ \Delta_H$ and $\epsilon_{H'}\circ f=\epsilon_H$.
\end{enumerate}
\item \label{lem:isoartcoalg:b}
If $f$ is a morphism of proartinian coalgebras and it is moreover a homeomorphism, then $f^{-1}$ is again a morphism of proartinian coalgebras.
\end{enumerate}
\end{lem}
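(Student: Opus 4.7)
The plan is to prove the equivalence in (a) via the universal property of the projective limits that define the proartinian coalgebra structures, and then deduce (b) as a formal corollary using the functoriality of~$\hot$.

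For (i) $\Rightarrow$ (ii) of (a), continuity is immediate from the definition of a morphism of proartinian coalgebras. For the compatibility with the comultiplication, I would fix $U' \in \Lambda_{H'}$, pick $U \in \Lambda_H$ with $f(U) \subseteq U'$ so that $f_{U',U}:H/U \to H'/U'$ is a coalgebra morphism, and combine the coalgebra compatibility of $f_{U',U}$ with the defining diagrams \eqref{Deltacontinuous} for both $H$ and $H'$ to derive
\[ (\pi_{U'}\hot\pi_{U'})\circ\Delta_{H'}\circ f \;=\; (f_{U',U}\otimes f_{U',U})\circ\Delta_U\circ\pi_U \;=\; (\pi_{U'}\hot\pi_{U'})\circ(f\hot f)\circ\Delta_H, \]
where the right-hand identity uses that $(\pi_{U'}\circ f)\hot(\pi_{U'}\circ f)$ factors as $(f_{U',U}\otimes f_{U',U})\circ(\pi_U\hot\pi_U)$ (which is well defined because the image already lies in the finite-dimensional algebraic tensor product~$H'/U'\otimes H'/U'$). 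The family $\{\pi_{U'}\hot\pi_{U'}\}_{U'\in\Lambda_{H'}}$ is jointly monic on $H'\hot H'$, since any pair $(V',W')\in\Lambda_{H'}\times\Lambda_{H'}$ has a common refinement $U'\subseteq V'\cap W'$ inside $\Lambda_{H'}$; hence $\Delta_{H'}\circ f = (f\hot f)\circ\Delta_H$ follows. The counit identity $\epsilon_{H'}\circ f = \epsilon_H$ is obtained analogously from $\epsilon_{U'}\circ f_{U',U}=\epsilon_U$ via \eqref{uepsilonScontinuous}.

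For (ii) $\Rightarrow$ (i), continuity furnishes, for each $U'\in\Lambda_{H'}$, a pair $(U,f_{U',U})$ with $f(U)\subseteq U'$. Composing the assumed identity $\Delta_{H'}\circ f=(f\hot f)\circ\Delta_H$ with $\pi_{U'}\hot\pi_{U'}$ and simplifying both sides via \eqref{Deltacontinuous} yields $\Delta_{U'}\circ f_{U',U}\circ\pi_U = (f_{U',U}\otimes f_{U',U})\circ\Delta_U\circ\pi_U$; the surjectivity of $\pi_U$ then gives the coalgebra compatibility of $f_{U',U}$, and the counit axiom for $f_{U',U}$ is derived the same way from $\epsilon_{H'}\circ f=\epsilon_H$.

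For (b), since $f$ is a homeomorphism, $f^{-1}$ is continuous, and by the functoriality of $\hot$, the map $f\hot f$ is invertible with inverse $f^{-1}\hot f^{-1}$. Pre- and post-composing the identities of (a)(ii) for $f$ with $f^{-1}$ and $f^{-1}\hot f^{-1}$ respectively yields $(f^{-1}\hot f^{-1})\circ\Delta_{H'}=\Delta_H\circ f^{-1}$ and $\epsilon_H\circ f^{-1}=\epsilon_{H'}$, whence (a)(ii) applied to $f^{-1}$ concludes. The most delicate point throughout is keeping track of the interplay between the algebraic tensor product $\otimes$ and the completed tensor product~$\hot$: one must verify that all intermediate projections land in finite-dimensional tensor products, where the two coincide, so that the diagram chases and the factorization of $f\hot f$ through finite quotients behave as expected.
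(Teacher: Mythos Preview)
Your proposal is correct and follows essentially the same approach as the paper: both arguments reduce the global identity $\Delta_{H'}\circ f=(f\hot f)\circ\Delta_H$ to the finite-level coalgebra compatibility of the $f_{U',U}$ via the defining diagrams \eqref{Deltacontinuous}, using surjectivity of $\pi_U$ in one direction and joint monicity of the projections in the other, and then deduce (b) formally from (a) using the invertibility of $f\hot f$. The only cosmetic difference is that the paper packages both implications into a single commutative cube and works with the general projections $\pi_{V'}\hot\pi_{W'}$, whereas you treat the two directions separately and use the diagonal family $\{\pi_{U'}\hot\pi_{U'}\}$---your remark that any $(V',W')$ admits a common refinement $U'\subseteq V'\cap W'$ is exactly what is needed to see that this diagonal family is still jointly monic.
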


\begin{proof}
\eqref{lem:isoartcoalg:a} Since morphisms of proartinian coalgebras are continuous, we can assume without loss of generality that $f$ is continuous.
Now take any opens $V',W'\in \Lambda_{H'}$, and any open $U'\subset V'\cap W'$. The continuity of $f$ implies that there exists an open $U\in\Lambda_H$ such that $f(U)\subset U'$, leading to induced maps $f_{U',U}:H/U\to {H'}/{U'}$.
Now consider the following diagram:
\[
\xymatrix{
H\ar[rrr]^{f} \ar[dd]^{\Delta_H} \ar[dr]^{\pi_{U}} &&& {H'}\ar[dd]^(.65){\Delta_{H'}}\ar[dr]^{\pi_{U'}} && \\
& H/U \ar[rrr]^(.35){f_{U',U}}\ar[dd]^(.35){\Delta_{H/U}}   &&& {H'}/{U'} \ar[dd]^{\Delta_{{H'}/{U'}}} & \\
H \hot H \ar[rrr]^(.65){f \hot f} \ar[dr]^{\pi_{U} \hot \pi_{U}}  &&& {H'} \hot {H'} \ar@(d,l)[dddr]_{\pi_{V'}\hot \pi_{W'}} \ar[dr]^{\pi_{U'} \hot \pi_{U'}}&& \\
& H/U \ot H/U \ar[rrr]^(.4){f_{U',U} \ot f_{U',U}} &&& {H'}/{U'} \ot {H'}/{U'} \ar[dd]^{\pi_{V',U'}\ot \pi_{W',U'}} & \\
\\
&&
&& {H'}/{V'} \ot {H'}/{W'} \\
}
\]
The continuity of $f$ and, as a consequence, of $f \hot f$ implies that the top and the bottom of the box are commutative diagrams.
The left and the (extended) right side walls commute by the construction of $\Delta_H$ and $\Delta_{H'}$, using that $H$ and ${H'}$ are proartinian coalgebras, see \eqref{Deltacontinuous}.
The consequence we are interested in is the following. Since each of the maps $\pi_U$ is surjective, and the morphisms $\pi_{V'}\hot \pi_{W'}$ (varying over all $V',W'\in \Lambda_{H'}$) are jointly monic, the back of the box commutes, i.e.\ $\Delta_{H'}\circ f=(f\hot f)\circ \Delta_H$, if and only if the front half of the box commutes for all $U'$ (and correspondingly chosen $U$), that is, if and only if the maps $f_{U',U}$ respect the comultiplications.

Further consider this diagram.
\[
\xymatrix{
H    \ar[rr]^{f}        \ar[d]^{\pi_{U}}  \ar@/_7pc/[dddr]_{\epsilon_H}  && {H'}   \ar[d]_{\pi_{U'}} \ar@/^7pc/[dddl]^{\epsilon_{H'}} \\
H/U \ar[rr]^{f_{U',U}}
\ar[ddr]_{\epsilon_{H/U}}&& {H'}/{U'}
\ar[ddl]^{\epsilon_{{H'}/{U'}}} \\
\\
& K & \\
}
\]
The rectangle is commutative by the continuity of~$f$. The left half is commutative by the construction of $\epsilon_H$, coming from the proartinian coalgebra structure, and similarly also the right half is commutative by the construction of~$\epsilon_{H'}$.
Using the surjectivity of each $\pi_U$, we see that $\epsilon_H = \epsilon_{H'} \circ f$ if and only if the lower part of the diagram (the diagram without the first line) commutes, that is, if and only if the maps $f_{U',U}$ respect the counits.

The proof is finished because the commutativity of the front half of the box and of the above diagram without the first line is exactly the definition of a morphism of proartinian algebras.

\eqref{lem:isoartcoalg:b} If $f$ is a morphism of proartinian coalgebras and a homeomorphism, then $f^{-1}$ clearly satisfies $\Delta_H\circ f^{-1} = (f^{-1}\hot f^{-1})\circ \Delta_{H'}$, and hence $f^{-1}$ is again a morphism of proartinian coalgebras by part~\eqref{lem:isoartcoalg:a}.
\end{proof}

As in the finite case, a crucial role will be played by grouplike elements.

\begin{defi}
An element $x$ in a proartinian coalgebra $H$ is called {\em grouplike} if and only if $\pi_U(x)$ is grouplike in the (finite dimensional) coalgebra $H/U$ for any $U\in \Lambda_H$. 
\end{defi}

\begin{ex}\label{ex:grouplikes}
Consider a profinite set $X= \plim{i\in I} X_i$ taken as the limit of a projective system formed by maps of finite sets $\pi_{ij}: X_j\to X_i$ for any $i,j\in I$ with $i\le j$. Then define $H=K\llbracket X\rrbracket=\plim{i\in I} K[X_i]$, which is a proartinian coalgebra by considering a coalgebra structure on each finite dimensional $K[X_i]$, where $X_i$ is a base of grouplike elements. Consider now any grouplike element $x\in H$. This means that for any $i\in I$, $\pi_i(x)\in K[X_i]$ is grouplike, that is, $\pi_i(x)\in X_i$. Therefore, by definition of $X$ as profinite set, we find that $x\in X$. We can conclude that the grouplike elements of $K\llbracket X\rrbracket$ are exactly the elements of $X$. Remark that these elements are a linearly independent set, but not a base for $K\llbracket X\rrbracket$ unless $X$ is finite.
\end{ex}

\begin{cor}\label{co:grouplikes}
The grouplike elements of a proartinian coalgebra $H$ form a profinte set $G(H)$.

Any morphism of proartinian coalgebras restricts to a continuous map between the respective profinite sets of grouplike elements.

In fact, taking grouplikes leads to a functor from the category of proartinian coalgebras to the category of profinite sets, which is a right adjoint to the fully faithful functor that sends a profinite set $X= \plim{i\in I} X_i$ to the proartinian coalgebra $K\llbracket X\rrbracket=\plim{i\in I} K[X_i]$.
\end{cor}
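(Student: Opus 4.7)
The plan is to break the corollary into its three assertions and tackle them in order, leaning on Example~\ref{ex:grouplikes} and the definition of morphisms of proartinian coalgebras.

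First I would check that $G(H)$ is profinite. By the very definition of grouplike elements in a proartinian coalgebra, together with the identification $H\cong \plim{U\in\Lambda_H}H/U$ of Lemma~\ref{lem:proartinian}\eqref{lem:proartinian:b}, one has
\[ G(H)=\plim{U\in\Lambda_H} G(H/U)\subset \plim{U\in\Lambda_H}H/U=H. \]
Since the transition maps $\pi_{V,U}\colon H/U\to H/V$ are coalgebra morphisms, they send grouplikes to grouplikes, so the right-hand side is a well-defined projective limit. Each $G(H/U)$ is a finite set because grouplike elements in a finite-dimensional coalgebra are linearly independent. Hence $G(H)$, with the subspace topology inherited from $H$, is a profinite set.

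Next, for functoriality, let $f\colon H\to H'$ be a morphism of proartinian coalgebras and pick $x\in G(H)$. For every $U'\in\Lambda_{H'}$ choose $U\in\Lambda_H$ with $f(U)\subset U'$ and the induced coalgebra morphism $f_{U',U}\colon H/U\to H'/U'$; then $\pi_{U'}(f(x))=f_{U',U}(\pi_U(x))$ is grouplike because coalgebra morphisms between finite dimensional coalgebras preserve grouplikes. Therefore $f(x)\in G(H')$, and the restriction $G(f)\colon G(H)\to G(H')$ is continuous because $f$ is. Functoriality is then immediate.

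Finally I would establish the adjunction $K\llbracket -\rrbracket\dashv G$. Let $X=\plim{i}X_i$ be a profinite set and $H$ a proartinian coalgebra. Given $\phi\colon K\llbracket X\rrbracket\to H$ in $\ProVect_K$-coalgebras, restriction to grouplikes gives a continuous map $G(\phi)\colon X=G(K\llbracket X\rrbracket)\to G(H)$, using Example~\ref{ex:grouplikes} for the identification. Conversely, given a continuous $\psi\colon X\to G(H)$, the composite $\psi_U\colon X\to G(H)\to G(H/U)\subset H/U$ lands in a finite discrete set for each $U\in\Lambda_H$; by compactness of $X$ and the defining projective limit structure, $\psi_U$ factors through some projection $X\twoheadrightarrow X_{i(U)}$ as a set map $\bar\psi_U\colon X_{i(U)}\to G(H/U)$. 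Extending $\bar\psi_U$ linearly yields a coalgebra map $K[X_{i(U)}]\to H/U$ (such linear extensions are automatically coalgebra maps since $\Delta(x)=x\otimes x$ maps to $\Delta(\psi_U(x))=\psi_U(x)\otimes\psi_U(x)$ and counits also match on grouplikes); precomposing with $K\llbracket X\rrbracket\twoheadrightarrow K[X_{i(U)}]$ and assembling over $U$ via the universal property of $H=\plim H/U$ yields a morphism $\tilde\psi\colon K\llbracket X\rrbracket\to H$ of proartinian coalgebras. One checks that the two constructions are mutually inverse and natural in $X$ and $H$, giving the adjunction, and specializing to $H=K\llbracket Y\rrbracket$ with Example~\ref{ex:grouplikes} recovers $G(K\llbracket Y\rrbracket)=Y$ and so the fully faithfulness of $K\llbracket -\rrbracket$.

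The main obstacle is the construction of $\tilde\psi$ from $\psi$: one must verify that the factorizations $\bar\psi_U$ can be chosen compatibly (or that any choice yields the same assembled morphism into the limit), and that the assembled map is actually continuous and a coalgebra morphism in the sense of Definition~\ref{defi:infHopf}, invoking Lemma~\ref{lem:isoartcoalg}\eqref{lem:isoartcoalg:a} to translate between the compatibility of the quotient maps and the intrinsic comultiplication condition.
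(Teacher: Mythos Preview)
Your proposal is correct and follows essentially the same approach as the paper: profiniteness via $G(H)=\plim{U}G(H/U)$, functoriality from preservation of grouplikes under the induced finite-level coalgebra maps, and the adjunction via the unit $X\cong G(K\llbracket X\rrbracket)$ from Example~\ref{ex:grouplikes}. The paper's proof is considerably more terse on the adjunction---it only asserts that a coalgebra morphism $K\llbracket X\rrbracket\to H$ is ``completely determined'' by its restriction to $X\to G(H)$ without spelling out the inverse construction---so your explicit assembly of $\tilde\psi$ from $\psi$ via factorization through finite quotients, together with your honest flagging of the compatibility check, fills in details the paper leaves to the reader.
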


\begin{proof}
Let $H$ be a proartinian coalgebra, and take any $U\in \Lambda_H$. Then $H/U$ is a finite dimensional coalgebra. Since grouplike elements are linearly independent, the set $G(H/U)$ of grouplike elements in $H/U$ is finite. Moreover, since coalgebra morphisms preserve grouplike elements, we find that the projection morphisms $\pi_{V,U}:H/U\to H/V$ induce maps $\gamma_{V,U}:G(H/U)\to G(H/V)$ by restriction. Therefore we obtain a projective system of finite sets, whose projective limit is exactly the set of grouplike elements in $H$. 

The second statement follows directly from the first one.

For the finial statement recall from Example~\ref{ex:grouplikes} that $G(K\llbracket X\rrbracket)=X$, which is the unit of the desired adjunction. By the second statement, we then find that any coalgebra morphism from $K\llbracket X\rrbracket$ to a proartinian coalgebra $H$ is completely determined by a morphism from $X$ to $G(H)$, showing that we indeed have an adjunction as stated, and since the unit is an isomorphism, the left adjoint is fully faithful.
\end{proof}

Next we turn our attention to the theory of modules and (Hopf) ideals in proartinian Hopf algebras as well as coideals in proartinian coalgebras.
First note that the usual notion of left/right/two-sided ideal makes sense for proartinian Hopf algebras.
However, since the comultiplication in a proartinian coalgebra is defined by a completed tensor product, we have to adapt the notion of coideal accordingly, which we do as follows.

\begin{defi}
Let $H$ be a proartinian coalgebra over~$K$ with basis of neighbourhoods~$\Lambda_H$ as in Definition~\ref{defi:infHopf}.
We say that a $K$-subspace $I \subseteq H$ is a {\em two-sided proartinian coideal} if $\pi_U(I)=(I+U)/U \subseteq H/U$ is a two-sided coideal for all $U \in \Lambda_H$.

Similarly, if $H$ is a proartinian Hopf algebra over~$K$ with $\Lambda_H$ as in Definition~\ref{defi:infHopf}, then a $K$-subspace $I \subseteq H$ is a {\em proartinian Hopf ideal} if $(I+U)/U \subseteq H/U$ is a Hopf ideal for all $U \in \Lambda_H$.
\end{defi}

The next lemma shows that ideals in the usual sense coincide with closed ``proartinian'' ideals.

\begin{lemma}\label{le:proartinianideals}
Let $H$ be a proartinian Hopf algebra.
\begin{enumerate}[(a)]
\item\label{le:proartinianideals:a} If $I$ is a left ideal in $H$ (in the classical sense), then for each $U\in \Lambda_H$, we have that $(I+U)/U$ is a left $H/U$-ideal. Hence, the action of $H$ on $I$ is continuous with respect to the subtopology on $I$.
\item\label{le:proartinianideals:b} If $I\subset H$ is such that for each $U\in \Lambda_H$, $(I+U)/U$ is a left $H/U$-ideal, then the closure $\widehat I$ is a left ideal in $H$.
\item\label{le:proartinianideals:c} If $I\subset H$ is closed, then $I$ is a (closed) left ideal in $H$ if and only if $(I+U)/U$ is a left ideal in $H/U$ for all $U\in \Lambda_H$. 
\end{enumerate}
\end{lemma}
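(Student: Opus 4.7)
The plan is to prove \eqref{le:proartinianideals:a}, \eqref{le:proartinianideals:b}, \eqref{le:proartinianideals:c} in this order, deriving \eqref{le:proartinianideals:c} as a direct consequence of the first two via the characterization $\widehat{I}=\bigcap_{U\in\Lambda_H}(I+U)$ given in Proposition~\ref{prop:ProVect}\eqref{prop:ProVect:a2}.

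For part \eqref{le:proartinianideals:a}, the first point is immediate from the fact that each $U\in\Lambda_H$ is itself a two-sided ideal of $H$ (as the kernel of the algebra morphism $\pi_U:H\to H/U$). Hence if $I$ is a left ideal of $H$, then $I+U$ is again a left ideal and its image $(I+U)/U$ in $H/U$ is a left ideal of $H/U$. For the continuity of the action, I would recall that the multiplication $m:H\ot H\to H$ is continuous and extends to $\widehat m:H\hot H\to H$; its restriction to $H\ot I\to H$ lands inside $I$, and since the subspace topology on $I$ is induced from that of $H$, this restriction is continuous as a map $H\ot I\to I$.

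For part \eqref{le:proartinianideals:b}, which is the central step, fix $h\in H$ and $x\in\widehat{I}=\bigcap_{U\in\Lambda_H}(I+U)$, and fix an arbitrary $U\in\Lambda_H$; I need to show $hx\in I+U$. By the construction of the multiplication on $H$ as the limit of the multiplications $m_U$ on each quotient (the commutative square preceding Lemma~\ref{lem:isoartcoalg}), applied to the element $h\ot x \in H\ot H\hookrightarrow H\hot H$, one obtains the identity
\[
\pi_U(hx)=\pi_U(h)\pi_U(x) \quad\text{in } H/U.
\]
Since $x\in\widehat{I}\subseteq I+U$, we have $\pi_U(x)\in (I+U)/U$, and by hypothesis $(I+U)/U$ is a left ideal of $H/U$, so $\pi_U(hx)=\pi_U(h)\pi_U(x)\in (I+U)/U$. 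This means $hx\in I+U$. As $U\in\Lambda_H$ was arbitrary, $hx\in\widehat{I}$, proving that $\widehat{I}$ is a left ideal.

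Part \eqref{le:proartinianideals:c} then follows at once: if $I$ is closed, then $\widehat{I}=I$ by Proposition~\ref{prop:ProVect}\eqref{prop:ProVect:a2}, so one implication is \eqref{le:proartinianideals:a} and the converse is \eqref{le:proartinianideals:b}. The only step that requires genuine care is the pointwise identity $\pi_U(hx)=\pi_U(h)\pi_U(x)$ in part \eqref{le:proartinianideals:b}; everything else is formal manipulation of the proartinian structure.
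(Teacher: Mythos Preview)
Your proof is correct and follows essentially the same approach as the paper. The only minor difference is in part~\eqref{le:proartinianideals:b}: the paper argues by taking the projective limit of the finite action maps $H/U\times (I+U)/U\to (I+U)/U$ to obtain an action of $H$ on $\widehat I=\plim{U}(I+U)/U$, whereas you unpack this limit elementwise via the identity $\pi_U(hx)=\pi_U(h)\pi_U(x)$ and the characterization $\widehat I=\bigcap_{U}(I+U)$; these are two phrasings of the same argument.
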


\begin{proof}
\eqref{le:proartinianideals:a} Since by definition, the multiplication of $H$ induces a multiplication on all finite quotients by open Hopf ideals, the statement follows by the commutativity of the following diagram since $\pi_U(I)=(I+U)/U$ for all $U\in \Lambda_H$.
\[
\xymatrix{
H\times I \ar[rr]^-m \ar@{^(->}[d] && I \ar@{^(->}[d]\\
H\times H \ar[rr]^-m \ar@{->>}[d]^-{\pi_U\times \pi_U} && H \ar@{->>}[d]^-{\pi_U}\\
H/U \times H/U \ar[rr]^-{m_U} && H/U
}
\]
\eqref{le:proartinianideals:b} The statement tells us that the multiplication maps of each finite quotient $H/U$ restrict to a left action of $H/U$ on $(I+U)/U$:
\[\xymatrix{
H/U \times (I+U)/U \ar[rr]^-{m_U} && (I+U)/U
}
\]
Taking the projective limit along all $U\in \Lambda_H$, we obtain the desired action of $H=\plim{U\in \Lambda_H} H/U$ on $\widehat I=\plim{U\in\Lambda_H} (I+U)/U$.\\
\eqref{le:proartinianideals:c} is a direct combination of the previous two points since $I=\widehat I$ in this case.
\end{proof}

In contrast to the above theorem, a proartinian coideal is not a coideal in the classical sense. The reason is similar to the observation made above that the comultiplication of a proartinian coalgebra lands in the completed tensor product, and hence a proartinian coalgebra is not a coalgebra in the usual sense. By the same arguments, one can observe that
a proartinian coideal $I$ in $H$ has the property that $\Delta(I)\subset I\hot H +  H\hot I$.

\begin{defi}
Let $H$ be a proartinian Hopf $K$-algebra.
A topological $K$-vector space $L$ is called a {\em left $H$-module} if there is a continuous $K$-linear map
\[ H \times L \to L, \;\;\; (h,x) \mapsto h.x \]
such that
\begin{enumerate}[(1)]
\item $1.x = x$ for all $x \in L$ and
\item $(h_1 h_2). x = h_1.(h_2.x)$ for all $x \in L$ and all $h_1,h_2 \in H$.
\end{enumerate}
Further, a {\em proartinian $H$-module coalgebra over~$K$} is a proartinian left $H$-module which carries a compatible structure of proartinian coalgebra.
Finally, a {\em proartinian $H$-module algebra over~$K$} is a proartinian left $H$-module which carries a compatible structure of proartinian algebra.
\end{defi}

The following lemma, which will be useful later, now follows directly by combining the preceding definitions with the properties of the proartinian topology as stated in Proposition~\ref{prop:ProVect}.

\begin{lemma}\label{lem:openclosedideal}
Let $H$ be a proartinian coalgebra over $K$, and $I\subset H$ a $K$-linear subspace. 
\begin{enumerate}[(a)]
\item \label{lem:openclosedideal:a} $I$ is a closed (and open) proartinian coideal if and only if $H/I$ is a proartinian (and finite dimensional) coalgebra and the natural projection $H \twoheadrightarrow H/I$ is continuous.
\end{enumerate}
Suppose now that moreover $H$ is a proartinian Hopf algebra over $K$.
\begin{enumerate}[(a)]
\setcounter{enumi}{1}
\item \label{lem:openclosedideal:b} $I$ is a closed (and open) left ideal proartinian coideal in $H$ if and only if $H/I$ is a proartinian (and finite dimensional) left $H$-module coalgebra and the natural projection $H \twoheadrightarrow H/I$ is continuous.
\item \label{lem:openclosedideal:c} $I$ is a closed (and open) proartinian Hopf ideal in $H$ if and only if $H/I$ is a proartinian (and finite dimensional) Hopf algebra and the natural projection $H \twoheadrightarrow H/I$ is continuous.
\end{enumerate}
In particular, one can choose the set $\Lambda_H$ from Definition~\ref{defi:infHopf} to be the set of all open proartinian coideals in a proartinian coalgebra and the set of all open proartinian Hopf ideals in a proartinian Hopf algebra.
\end{lemma}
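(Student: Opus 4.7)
My plan is to prove the three biconditionals in (a), (b), (c) in parallel, in each case toggling between structural conditions on $I \subseteq H$ and on the quotient $H/I$, and to derive the ``in particular'' clause as a byproduct. For the forward direction of each part, assuming that $I$ is a closed proartinian coideal (respectively left ideal two-sided coideal, respectively Hopf ideal) in $H$, I invoke Proposition~\ref{prop:ProVect}\eqref{prop:ProVect:a2}--\eqref{prop:ProVect:b} to make $H/I$ into a proartinian $K$-vector space with basis of open neighbourhoods of $0$ given by $\{(U+I)/I : U\in\Lambda_H\}$ and with a continuous and open projection, and to identify $H/I$ with $\plim{U\in\Lambda_H} H/(U+I) \cong \plim{U\in\Lambda_H} (H/U)/((U+I)/U)$. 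The hypothesis on $I$ makes each quotient $(H/U)/((U+I)/U)$ a finite-dimensional coalgebra (respectively $H/U$-module coalgebra via Lemma~\ref{le:proartinianideals}, respectively Hopf algebra), with transition maps being morphisms of the same kind; hence $H/I$ inherits a proartinian structure of the required type and $\pi_I$ becomes a morphism of it. The finite-dimensional case in the ``and open'' variant is read off from Proposition~\ref{prop:proartinian}\eqref{prop:proartinian:b}--\eqref{prop:proartinian:c}.

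The backward direction is the main obstacle. I interpret the hypothesis so that the proartinian coalgebra structure on $H/I$ is one for which $\pi_I$ respects $\Delta$ and $\epsilon$, which by Lemma~\ref{lem:isoartcoalg}\eqref{lem:isoartcoalg:a} is equivalent to $\pi_I$ being a morphism of proartinian coalgebras. Then $I$ is closed by Proposition~\ref{prop:ProVect}\eqref{prop:ProVect:b}, and the task becomes showing, for each $U\in\Lambda_H$, that $(U+I)/U$ is a coideal of $H/U$. Equivalently I construct a coalgebra structure on the finite-dimensional space $H/(U+I)$ such that the projection $H/U\twoheadrightarrow H/(U+I)$ is a coalgebra morphism. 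For the comultiplication I compose $\Delta_H$ with the continuous map $\pi_{U+I}\hot\pi_{U+I}\colon H\hot H\to H/(U+I)\otimes H/(U+I)$ and verify that the result vanishes on the summands of $U+I$: on $U$ this is direct from the defining diagram~\eqref{Deltacontinuous} for $U\in\Lambda_H$, while on $I$ I factor $\pi_{U+I}\hot\pi_{U+I}$ through $\pi_I\hot\pi_I$ (legitimate because $(U+I)/I$ is open in $H/I$ by openness of $\pi_I$, giving a continuous $K$-linear map $H/I\twoheadrightarrow H/(U+I)$) and use that $\pi_I$ is a coalgebra morphism. The counit compatibility is an analogous, easier computation. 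Parts (b) and (c) combine this with Lemma~\ref{le:proartinianideals} to transfer the left ideal (respectively two-sided ideal) structure, and in (c) with the antipode diagram in~\eqref{uepsilonScontinuous} to handle $S$ on $H/(U+I)$; the open case again reduces to finite dimensionality via Proposition~\ref{prop:proartinian}\eqref{prop:proartinian:c}.

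Finally, the ``in particular'' assertion falls out of the forward directions: the originally chosen $\Lambda_H$ is automatically contained in the collection $\Lambda_H^{c}$ of all open proartinian coideals (respectively $\Lambda_H^{\mathrm{Hopf}}$ of all open proartinian Hopf ideals), which is therefore itself a basis of neighbourhoods of~$0$; by the forward direction of (a) (respectively (c)), every member $U$ of this collection equips $H/U$ with a finite-dimensional coalgebra (respectively Hopf algebra) structure, and for $U\subseteq U'$ in the collection the induced map $H/U\twoheadrightarrow H/U'$ is a morphism of the corresponding structure. Hence $\Lambda_H$ may be enlarged to the full set of open proartinian coideals (respectively Hopf ideals) while still satisfying the conditions of Definition~\ref{defi:infHopf}.
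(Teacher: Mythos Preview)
Your proposal is correct and is essentially a careful elaboration of what the paper leaves implicit: the paper does not give a detailed proof of this lemma, stating only that it ``follows directly by combining the preceding definitions with the properties of the proartinian topology as stated in Proposition~\ref{prop:ProVect}.'' Your argument unpacks exactly this, using Proposition~\ref{prop:ProVect}\eqref{prop:ProVect:a2}--\eqref{prop:ProVect:b} for the topological skeleton, the defining diagrams~\eqref{Deltacontinuous} and~\eqref{uepsilonScontinuous} together with Lemma~\ref{lem:isoartcoalg} for the coalgebra compatibilities, and Lemma~\ref{le:proartinianideals} for the ideal conditions. The only part requiring genuine care---the backward direction, where one must manufacture a coalgebra structure on $H/(U+I)$ and check that $H/U\twoheadrightarrow H/(U+I)$ respects it---you handle correctly by splitting the vanishing of $(\pi_{U+I}\hot\pi_{U+I})\circ\Delta_H$ on $U+I$ into the $U$-part (via~\eqref{Deltacontinuous}) and the $I$-part (via the factorisation through $\pi_I\hot\pi_I$ and the coalgebra-morphism property of~$\pi_I$).
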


To finish this section, we investigate more closely the case of {\em discrete} left $H$-modules.

\begin{lem}\label{lem:LI}
Let $H$ be a proartinian Hopf $K$-algebra and consider a $K$-linear map
$$\cdot: H\times L\to L.$$ 
Then the above map is continuous with respect to the discrete topology on $L$ (in particular, this is the case if $L$ is a discrete $H$-module) if and only if
\[ L = \bigcup_{I \subset H \textrm{ open proartinian Hopf ideal}} L^I, \]
where $L^I = \{ x \in L \;|\; \forall\, i \in I : i \cdot x = 0\}$.
\end{lem}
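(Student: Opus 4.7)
The plan is to prove both directions directly from the definition of continuity, exploiting that $L$ is discrete and that, by Lemma~\ref{lem:openclosedideal}, the open proartinian Hopf ideals form a basis of open neighbourhoods of $0$ in $H$.

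For the forward direction, I would fix $x\in L$ and use continuity of the action at the point $(0,x)\in H\times L$. Since $0\cdot x=0$ and $\{0\}\subset L$ is open (as $L$ is discrete), there is an open neighbourhood of $(0,x)$ in $H\times L$ that maps into $\{0\}$; using discreteness of $L$ this can be taken of the form $U\times \{x\}$ for some open subspace $U\ni 0$ in $H$. Hence $U\subseteq \mathrm{Ann}(x):=\{h\in H\mid h\cdot x=0\}$. By Lemma~\ref{lem:openclosedideal}, I can refine $U$ to an open proartinian Hopf ideal $I\subseteq U$, and then by definition $x\in L^I$.

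For the converse, I would fix $(h_0,x_0)\in H\times L$ and exhibit an open neighbourhood on which the action is constantly equal to $h_0\cdot x_0$ (this suffices because $L$ is discrete). By hypothesis pick an open proartinian Hopf ideal $I$ with $x_0\in L^I$, and take the neighbourhood $(h_0+I)\times\{x_0\}$. For any $i\in I$ one computes $(h_0+i)\cdot x_0 = h_0\cdot x_0 + i\cdot x_0 = h_0\cdot x_0$ by $K$-linearity and $x_0\in L^I$, which gives the required constancy.

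The only point that requires a moment's thought, rather than being an actual obstacle, is the passage in the forward direction from an arbitrary open subspace $U\subseteq \mathrm{Ann}(x)$ to an open \emph{proartinian Hopf ideal} contained in the annihilator; this is handled entirely by the last statement of Lemma~\ref{lem:openclosedideal}, which tells us that such Hopf ideals already form a basis of neighbourhoods of $0$. Everything else reduces to the fact that with a discrete codomain, continuity at $(h_0,x_0)$ is equivalent to the action being locally constant near that point, which under $K$-linearity in turn reduces to $\mathrm{Ann}(x_0)$ being a neighbourhood of $0$.
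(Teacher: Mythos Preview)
Your proof is correct and follows essentially the same approach as the paper's. Both arguments reduce continuity of the action (with $L$ discrete) to the condition that each annihilator $\{h\in H\mid h\cdot x=0\}$ is an open neighbourhood of~$0$ in~$H$, and then invoke that the open proartinian Hopf ideals form a basis of such neighbourhoods; the paper packages this as a single chain of equivalences, whereas you split it into the two implications, but the content is the same.
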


\begin{proof}
Since $L$ carries the discrete topology, the continuity of the action is equivalent to
\[ \{(h,y) \in H \times L \;|\; h \cdot y = x \}\]
being an open set in $H \times L$ for all $x \in L$. This set is the disjoint union of all the sets
\[ \{h \in H  \;|\; h \cdot y = x \} \times \{y\}  \]
for $y \in L$. Consequently, continuity is equivalent to $\{h \in H  \;|\; h \cdot y = x \}$ being open in~$H$ for all $x,y \in L$.
To characterise this, let $\bar h \in H$ be any element such that $\bar h \cdot y=x$. Then
\[ \{h \in H  \;|\; h \cdot y = x = \bar h \cdot y \} = \bar h + \{i \in H \;|\; i \cdot y=0 \}. \]
Since addition on $H$ is by homeomorphisms, continuity is equivalent to the left ideal
\[ I_y = \{i \in H \;|\; i \cdot y=0 \} \]
being open in~$H$.
By the proartinian topology on $H$, this is further equivalent to the assertion that, for every $y \in L$, $I_y$ contains an open proartinian Hopf ideal $I \subset H$ (and hence $y\in L^{I_y}\subset L^I$). This is then further equivalent to the assertion that every $y \in L$ lies in $L^I$ for some open proartinian Hopf ideal $I \subset H$, as claimed.
\end{proof}

\subsection{Hopf-Galois extensions}

Let $L/K$ be a separable field extension and $H$ a proartinian Hopf $K$-algebra such that $L$ is a discrete left $H$-module algebra.
Then we can see $\End_K(L)$ naturally as a projective limit of $K$-vector spaces in the following way. Using Lemma~\ref{lem:LI} in the first equality, we find that
\[\End_K(L)=\Hom_K\left(\bigcup_I L^I,L\right)=\plim{I} \Hom_K(L^I,L)\]
where $I$ varies over all open proartinian Hopf ideals in $H$.
If each $\Hom_K(L^I,L)$ is finite dimensional as $L$-vector space, equivalently, if $L^I$ is finite dimensional as $K$-vector space, then $\End_K(L)$ will be proartinian, but in general this is not guaranteed at this point.

We can define, by means of the universal property of $L\hot_K H$ as projective limit, the proartinian version of the canonical Hopf-Galois map as the morphism $\can$ which makes the following diagram commutative for any open proartinian Hopf ideal $I\subset H$
\begin{equation}\label{defcanproartinian}
\xymatrix{
L \hot_K H \ar[rr]^-\can \ar@{->>}[d]^{\pi_I} && \End_K(L) \ar@{->>}[d]^{\res_I}\\
L \ot_K H/I \ar[rr]^-{\can_I} && \Hom_K(L^I,L)
}\end{equation}
where, as in the finite case, we define $\can_I(x\ot \ol h)(y)=x(\ol h\cdot y)$ for all $x \in L$, $y\in L^I$ and $\ol h\in H/I$.
By construction, this morphism is then continuous.

\begin{lem}\label{propscan}
Let $L/K$ be an $H$-Galois extension as in Definition \ref{definfiniteHG}. Consider any open proartinian Hopf ideal $I\subset H$. Then
\begin{enumerate}[(a)]
\item \label{propscan:a} $\can_I$ is surjective.
\item \label{propscan:b} $L^I$ is finite dimensional as a $K$-vector space, and hence $\End_K(L)$ is a proartinian $L$-vector space.
\item \label{propscan:c} $\Hom_K(L^I,L)$ is an $L$-coalgebra whose coalgebra structure is defined by the formula
$$f(xy)=f_{(1)}(x)f_{(2)}(y)$$
for all $f\in \Hom_K(L^I,L)$ and $x,y \in L^I$. Consequently, $\End_K(L)$ is a proartinian $L$-coalgebra.
\item \label{propscan:d} $\can_I$ is a morphism of $L$-coalgebras and therefore $\can$ is an isomorphism of proartinian $L$-coalgebras.
\item \label{propscan:e} For any field extension $\tilde{L}/L$, 
$\Hom_K(L,\tilde L)$ is a right proartinian $\tilde L\hot_K H$-module $\tilde L$-coalgebra and the canonical map $\widetilde\can: \tilde{L} \hot_K H \to \Hom_K(L,\tilde{L})$ is a morphism of right $\tilde L\hot_K H$-module $\tilde L$-coalgebras.
\end{enumerate}
\end{lem}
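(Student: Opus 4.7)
The unifying strategy is to exploit the fact that $H/I$ is finite dimensional (since $I$ is an open proartinian Hopf ideal, by Proposition~\ref{prop:proartinian}\eqref{prop:proartinian:b} applied to the discrete quotient), placing us within the finite dimensional theory of Section~2 at each level $I$, and then to pass to the projective limit over all open proartinian Hopf ideals. More precisely, I plan to handle (a) and (b) directly from the commutative square defining $\can_I$, then bootstrap (c) and (d) from the finite dimensional analogues (Lemmas~\ref{grouplikes} and \ref{cancoalgmap}), and finally obtain (e) by assembling the finite dimensional canonical maps $\widetilde\can_I$ into a projective limit.

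For (a), the restriction map $\res_I:\End_K(L)\to \Hom_K(L^I,L)$ is surjective because any $K$-linear map $L^I\to L$ extends to $L$ upon choosing a $K$-linear complement of $L^I$ in $L$. Since $\can$ is surjective by the $H$-Galois hypothesis and $\pi_I$ is surjective, the commutative diagram $\can_I\circ \pi_I=\res_I\circ \can$ forces $\can_I$ to be surjective as well. For (b), the finite dimensional $L$-vector space $L\otimes_K H/I$ surjects onto $\Hom_K(L^I,L)$, which is therefore finite dimensional; using $\dim_L\Hom_K(L^I,L)=\dim_K L^I$, we deduce that $L^I/K$ is finite. The identification $\End_K(L)=\plim{I}\Hom_K(L^I,L)$ then exhibits $\End_K(L)$ as a projective limit of finite dimensional $L$-vector spaces, hence as a proartinian $L$-vector space.

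For (c), the finiteness of $L^I$ from (b) lets us apply Lemma~\ref{grouplikes}\eqref{grouplikes:c} to equip $\Hom_K(L^I,L)$ with its $L$-coalgebra structure. For $I\subseteq J$ open, the $K$-algebra inclusion $L^J\subseteq L^I$ dualizes to an $L$-coalgebra surjection $\Hom_K(L^I,L)\twoheadrightarrow \Hom_K(L^J,L)$, so the projective system consists of coalgebra morphisms and the limit $\End_K(L)$ inherits a proartinian $L$-coalgebra structure. For (d), each $\can_I$ is a morphism of $L$-coalgebras by exactly the computation of Lemma~\ref{cancoalgmap}, namely $\can_I(x\ot\bar h)(yz)=x(\bar h\cdot(yz))=x(\bar h_{(1)}\cdot y)(\bar h_{(2)}\cdot z)$, so taking the projective limit produces a continuous morphism of proartinian $L$-coalgebras $\can$; since $\can$ is already a homeomorphism by the $H$-Galois hypothesis, Lemma~\ref{lem:isoartcoalg}\eqref{lem:isoartcoalg:b} upgrades it to an isomorphism of proartinian $L$-coalgebras.

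For (e), the same argument applied after base change to $\tilde L$: for each open proartinian Hopf ideal $I$, the finite dimensional canonical map $\widetilde\can_I:\tilde L\ot_K H/I\to \Hom_K(L^I,\tilde L)$ is a morphism of right $\tilde L\ot_K H/I$-module $\tilde L$-coalgebras by Lemma~\ref{cancoalgmap}. The transition maps in the inverse system $\{\Hom_K(L^I,\tilde L)\}$ are compatible with these structures, so passage to the projective limit endows $\Hom_K(L,\tilde L)$ with the structure of a right proartinian $\tilde L\hot_K H$-module $\tilde L$-coalgebra, and exhibits $\widetilde\can$ as a morphism of such. The main obstacle I anticipate is the careful bookkeeping needed to verify that all structure maps at the finite level (action, comultiplication, counit) are strictly compatible with the transition maps of the projective system, so that the completed tensor product $\tilde L\hot_K H$ acts in a well-defined continuous way and the limit morphism $\widetilde\can$ inherits the claimed compatibility; the delicate point here is that, as highlighted in Example~\ref{ex:non-iso} and the paragraph preceding it, the choice of morphisms on finite quotients is not canonical, so one must cofinally restrict to a well-behaved indexing so that the limit actually reflects the desired structural properties.
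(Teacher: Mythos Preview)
Your proposal is correct and follows essentially the same approach as the paper: deduce (a) from the commutative square \eqref{defcanproartinian}, use (a) with finite dimensionality of $H/I$ to get (b), then invoke Lemma~\ref{grouplikes}\eqref{grouplikes:c} and Lemma~\ref{cancoalgmap} at each finite level for (c)--(e), concluding via Lemma~\ref{lem:isoartcoalg}\eqref{lem:isoartcoalg:b} and passage to the projective limit. Your concern about the bookkeeping in the projective limit is prudent but not a real obstacle here, since the transition maps are the canonical restriction maps and the structures at each level are induced functorially from the algebra structure of $L^I$, so compatibility is automatic.
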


\begin{proof}
\eqref{propscan:a} follows directly from the commutative diagram \eqref{defcanproartinian} that defines $\can$.\\
\eqref{propscan:b} Since $\can_I$ is surjective and $H/I$ is finite dimensional as $K$-vector space, we find that  $\Hom_K(L^I,L)$ is finite dimensional as $L$-vector space. As remarked before the lemma, this was the only missing condition for $\End_K(L)$ to be a proartinian $L$-vector space. Furthermore as $\Hom_K(L^I,L)$ is finite dimensional as $L$-vector space, $L^I$ is also finite dimensional as $K$-vector space.\\
\eqref{propscan:c} Since $L^I$ is finite dimensional by part \eqref{propscan:b}, the first statement is exactly Lemma \ref{grouplikes}\eqref{grouplikes:c}. The second statement then follows by definition since $\End_K(L)$ is a projective limit of finite dimensional $L$-coalgebras. \\
\eqref{propscan:d} Since $H/I$ and $L^I$ are finite dimensional, $\can_I$ being a coalgebra morphism is exactly Lemma~\ref{cancoalgmap}. Therefore, the homeomorphism $\can$ is a(n iso)morphism of proartinian coalgebras by Lemma \ref{lem:isoartcoalg}(\ref{lem:isoartcoalg:b}).\\
\eqref{propscan:e} 
For any open proartinian Hopf ideal $I$ in $H$, we know from \eqref{propscan:b} that $L^I$ is finite dimensional over $K$ and hence $\Hom_K(L^I,\widetilde L)$ is finite dimensional over $\widetilde L$. Therefore, we can apply Lemma~\ref{cancoalgmap} and see that $\Hom_K(L^I,\widetilde L)$ is a $\tilde L\hot_K H/I$-module $\tilde L$-coalgebra  and
 $\widetilde\can_I:\widetilde L \hot_K H/I\to \Hom_K(L^I,\widetilde L)$ is a morphism of right $\tilde L\hot_K H/I$-module $\tilde L$-coalgebras. 
We can then conclude by taking the projective limit over the set~$\Lambda_H$ of all open proartinian Hopf ideals of $H$.
\end{proof}

Remark that even though $\can$ is an isomorphism of proartinian coalgebras, it does not follow immediately that $\can_I$ is a bijection for any open proartinian Hopf ideal $I$ (see Example~\ref{ex:non-iso}).
The difficulty is to establish the injectivity of $\can_I$, so that we can link a proartinian Hopf-Galois extension with finite dimensional ones. To that aim, we will extend Greither--Pareigis theory to the infinite case in the next subsection.
We first establish some properties of $J(L_0)$, defined in~\eqref{defi:JL0}.

\begin{lemma}\label{le:propertiesJ(L0)}
For any intermediate field $K\subset L_0\subset L$, we find that
\begin{enumerate}[(a)]
\item\label{le:propertiesJ(L0):a} $J(L_0)$ is a closed left ideal two-sided proartinian coideal of $H$.
\item\label{le:propertiesJ(L0):b} If $L_0$ is $H$-stable, then $J(L_0)$ is also a right ideal. 
\item\label{le:propertiesJ(L0):c} The canonical map $\can_0:L \hot H/J(L_0) \to \Hom_K(L_0,L)$ is a quotient map.
\item\label{le:propertiesJ(L0):d} $L_0$ is an $H$-subextension if and only if $\can_0$ is injective.
\end{enumerate}
\end{lemma}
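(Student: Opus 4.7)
I address the four assertions in the order (a), (b), (c), (d). For part (a), the left ideal property is immediate from associativity of the action: for $h \in J(L_0)$, $h' \in H$, $x \in L_0$, we have $(h'h) \cdot x = h' \cdot (h \cdot x) = 0$. To see that $J(L_0)$ is closed, note that for each $x \in L_0$ the evaluation $\phi_x : H \to L$, $h \mapsto h \cdot x$, is continuous (by the continuity of the action, cf.\ Lemma~\ref{lem:LI}); since $L$ is discrete, $\{0\} \subseteq L$ is clopen, so each $\phi_x^{-1}(\{0\})$ is closed, and hence so is $J(L_0) = \bigcap_{x \in L_0} \phi_x^{-1}(\{0\})$.

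The proartinian coideal property is the hardest part. My plan is to verify, for every open proartinian Hopf ideal $U \in \Lambda_H$, that $(J(L_0)+U)/U$ is a two-sided coideal of the finite-dimensional Hopf algebra $H/U$. The idea is to reduce to the finite setting by first showing that $L^U$ is a finite-dimensional subfield of $L$ (using Lemma~\ref{propscan}(b) and Lemma~\ref{prop:intermediate}(a)) and that $L^U/K$ is finite $H/U$-Galois. One then considers the subfield $L_0 \cap L^U$ of $L^U$ and, via Proposition~\ref{prop:intercomp}, produces an $H/U$-subextension $F_U$ of $L^U/K$ whose annihilator in $H/U$ will coincide with $(J(L_0)+U)/U$; applying Proposition~\ref{prop:HopfIdeal}(a) to this finite situation then yields the required coideal property.

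Part (b) is a direct computation: if $L_0$ is $H$-stable and $h \in J(L_0)$, $h' \in H$, $x \in L_0$, then $h' \cdot x \in L_0$, so $(hh') \cdot x = h \cdot (h' \cdot x) = 0$. For part (c), I use the commutative square
\[
\xymatrix{
L \hot H \ar[rr]^{\can} \ar[d]_{\id \hot \pi} && \End_K(L) \ar[d]^{\res_{L_0}} \\
L \hot H/J(L_0) \ar[rr]^{\can_0} && \Hom_K(L_0, L)
}
\]
where $\pi : H \twoheadrightarrow H/J(L_0)$ is the canonical projection. The top arrow is a homeomorphism by Lemma~\ref{propscan}(d); the restriction $\res_{L_0}$ is a surjective quotient map since any $K$-linear map $L_0 \to L$ extends to $L$; and the left vertical arrow is a quotient map because $J(L_0)$ is closed by (a), so $\pi$ is a quotient map by Proposition~\ref{prop:ProVect}(b) and completing with $L$ preserves this. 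Hence $\can_0$ itself is a quotient map. Part (d) is then immediate from Definition~\ref{defi:Hsubext} combined with (c): $L_0$ is an $H$-subextension iff $\can_0$ is a homeomorphism, which, given that $\can_0$ is already a continuous surjective quotient map, is equivalent to $\can_0$ being injective. The main obstacle is the careful identification in (a) of $(J(L_0)+U)/U$ as the annihilator of an $H/U$-subextension of $L^U/K$, which requires a subtle interaction between the infinite and finite Hopf-Galois structures.
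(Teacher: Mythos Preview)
Your arguments for (b), (c), (d) match the paper's essentially verbatim, so the issue is entirely in part~(a), specifically the coideal claim.

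There is a genuine circularity in your plan. You want to show that $L^U/K$ is $H/U$-Galois for each open proartinian Hopf ideal $U$, citing Lemma~\ref{propscan}(b) and Lemma~\ref{prop:intermediate}(a). But those lemmas only give that $L^U$ is a finite-dimensional intermediate field and that $\can_U : L\otimes H/U \to \Hom_K(L^U,L)$ is \emph{surjective}; the injectivity of $\can_U$ (hence the Galois property of $L^U/K$) is precisely the content of Proposition~\ref{prop:LIsub} and Corollary~\ref{cor:LIsub}, which are proved later via infinite Greither--Pareigis theory and which in turn rely on Proposition~\ref{pr:subextensioninvariants}, hence on the present Lemma~\ref{le:propertiesJ(L0)}. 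So you cannot invoke that $L^U/K$ is $H/U$-Galois here. A second, related gap is your appeal to Proposition~\ref{prop:intercomp}: that result takes as input two $H$-\emph{subextensions}, whereas $L_0\cap L^U$ is merely an intermediate field and is not known to be an $H/U$-subextension. It is therefore unclear what the field $F_U$ is supposed to be or why its annihilator would equal $(J(L_0)+U)/U$.

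The paper's route avoids all of this by working directly with annihilators. Writing $L_0^I := L_0\cap L^I$, one has $L_0 = \bigcup_{I\in\Lambda_H} L_0^I$ and hence $J(L_0) = \bigcap_{I\in\Lambda_H} J(L_0^I)$, with $I\subset J(L_0^I)$ for each~$I$. The key observation is that $L\otimes (J(L_0^I)/I)$ is the kernel of the $L$-coalgebra morphism $L\otimes H/I \xrightarrow{\can_I} \Hom_K(L^I,L)\xrightarrow{\res}\Hom_K(L_0^I,L)$ (both factors are coalgebra maps by Lemma~\ref{cancoalgmap} and the fact that restriction to a subalgebra dualises to a coalgebra quotient); kernels of coalgebra maps are coideals, and faithfully flat descent along $L/K$ shows that $J(L_0^I)/I$ itself is a two-sided coideal of $H/I$. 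One then gets $H/J(L_0)\cong \plim{I}(H/I)/(J(L_0^I)/I)$ as a proartinian left $H$-module coalgebra and concludes by Lemma~\ref{lem:openclosedideal}(b). No Galois property of $L^U/K$ is needed, and no appeal to Proposition~\ref{prop:intercomp} is made.
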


\begin{proof}
\eqref{le:propertiesJ(L0):a}
$J(L_0)$ is a closed set since it equals the intersection $\bigcap_{x \in L_0} m_x^{-1}\{(0)\}$ where $m_x: H \to L$ is the continuous map $h \mapsto h \cdot x$ for $x \in L$.
Since $L=\bigcup_{I\in \Lambda_H} L^I$ (see Lemma~\ref{lem:LI}), we also have $J(L_0) = \bigcap_{I \in \Lambda_H} J(L_0^I)$, and for every $I \in \Lambda_H$, we have $I \subset J(L_0^I)$ and the image of $J(L_0^I)$ in $H/I$ coincides with the left ideal two-sided coideal of $H/I$ consisting of the elements annihilating $L_0^I$, we obtain isomorphisms
\[ H/J(L_0) \cong \plim{I \in \Lambda_H} H/J(L_0^I) \cong \plim{I \in \Lambda_H} (H/I)/(J(L_0^I)/I). \]
Therefore, we can conclude by Lemma~\ref{lem:openclosedideal}\eqref{lem:openclosedideal:b} that $J(L_0)$ is a closed left ideal two-sided proartinian coideal of~$H$.

\eqref{le:propertiesJ(L0):b}
This follows in the same way as in the finite case. For any $h\in H$, $i\in J(L_0)$ and $x\in L_0$, we find that $(ih)\cdot x = i\cdot (h\cdot x) = 0$, since $h\cdot x\in L_0$, and therefore also $ih\in J(L_0)$.

\eqref{le:propertiesJ(L0):c} 
Consider the following diagram
\[\xymatrix{L \hot H \ar[r]^-{\can}_-\sim\ar@{->>}[d]& \End_K(L)\ar@{->>}[d]\\ L \hot H/J(L_0)\ar[r]^{\can_0} & \Hom_K(L_0, L).}\]
The vertical arrows are quotient maps (i.e.\ they are surjective, and topology on their image is the quotient topology). Since $\can$ is a homeomorphism, the composition is also a quotient map, and therefore $\can_0$ is a quotient map as well.

\eqref{le:propertiesJ(L0):d} 
is now obvious since any injective quotient map is a homeomorphism.
\end{proof}

We remark that for every intermediate field $K\subset L_0\subset L$ we trivially have $L_0 \subseteq L^{J(L_0)}$ and, consequently, $J(L_0) \supseteq J(L^{J(L_0)})$.
In fact, we have equality
\[ J(L_0) = J(L^{J(L_0)})\]
as the other inclusion is clear. This of course is, in other words, a direct consequence of the fact that we have a Galois connection between intermediate field extensions and (proartinian) left ideals two-sided coideals.

\begin{prop}\label{pr:subextensioninvariants}
Let $L/K$ be a Hopf-Galois extension for a proartinian Hopf algebra~$H$.
\begin{enumerate}[(a)]
\item\label{pr:subextensioninvariants:a} For any $H$-subextension, we have $L_0 = L^{J(L_0)}$.
\item\label{pr:subextensioninvariants:b} For any closed ideal two-sided proartinian coideal $C$ in~$H$ such that the canonical map
\[L \hot_K H/C \xrightarrow{\can_C} \Hom_K(L^C,L)\]
is an isomorphism, we have that $L^C$ is an $H$-subextension of~$L$ and $C = J(L^C)$.
\end{enumerate}
\end{prop}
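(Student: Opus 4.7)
The plan is to leverage Lemma~\ref{le:propertiesJ(L0)}\eqref{le:propertiesJ(L0):c}, which tells us that the canonical map $\can_0$ associated with \emph{any} intermediate field is a quotient map, together with Lemma~\ref{le:propertiesJ(L0)}\eqref{le:propertiesJ(L0):d}, which characterises $H$-subextensions by injectivity of $\can_0$. Both parts then reduce to direct diagram chases.

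For part~\eqref{pr:subextensioninvariants:a}, I set $L_1 := L^{J(L_0)}$. The containment $L_0 \subseteq L_1$ is immediate, and the Galois-connection equality $J(L_0) = J(L_1)$ is then automatic (the nontrivial inclusion $J(L_0) \subseteq J(L_1)$ holds because $J(L_0)$ annihilates $L_1$ by construction). Consequently $\can_0$ factors as
\[ L \hot H/J(L_0) \xrightarrow{\can_{L_1}} \Hom_K(L_1, L) \xrightarrow{\textrm{restr}} \Hom_K(L_0, L), \]
with $\can_{L_1}$ the canonical map for $L_1$ (well defined since $J(L_1)=J(L_0)$) and $\textrm{restr}$ the restriction. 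Since $\can_{L_1}$ is a quotient map and $\can_0$ is injective by hypothesis, $\can_{L_1}$ is injective as well, hence a homeomorphism. Then $\textrm{restr}$ must be bijective, which forces $L_0 = L_1$: otherwise any nonzero $K$-linear map on $L_1/L_0$ valued in $L$ would yield a nontrivial element of $\ker(\textrm{restr})$.

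For part~\eqref{pr:subextensioninvariants:b}, I set $L_2 := L^C$. The inclusion $C \subseteq J(L_2)$ gives a continuous surjection $\pi: L \hot H/C \twoheadrightarrow L \hot H/J(L_2)$ with $\can_C = \can_{L_2} \circ \pi$. Since $\can_{L_2}$ is a quotient map, $\can_C$ is an isomorphism, and $\pi$ is surjective, a quick chase (given $v \in \ker \can_{L_2}$, pick $u \in \pi^{-1}(v)$: then $\can_C(u)=\can_{L_2}(v)=0$ forces $u=0$, hence $v=0$) shows $\can_{L_2}$ is injective, hence bijective. Therefore $L_2$ is an $H$-subextension by Lemma~\ref{le:propertiesJ(L0)}\eqref{le:propertiesJ(L0):d}, and $\pi$ itself must also be bijective.

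To conclude $C = J(L_2)$, I use the natural injections $H/C \hookrightarrow L \hot H/C$ and $H/J(L_2) \hookrightarrow L \hot H/J(L_2)$, which follow from writing the completed tensor products as inverse limits of the ordinary tensor products $L \ot H/(I+C)$ for $I \in \Lambda_H$ and invoking faithful flatness of $L/K$ level-wise. These fit in a commutative square with $\pi$ on one vertical side and the natural surjection $q: H/C \twoheadrightarrow H/J(L_2)$ on the other; the bijectivity of $\pi$ then forces $q$ to be injective, hence bijective, giving $C = J(L_2)$. I expect this last descent step to be the main technical obstacle, as it requires carefully matching the proartinian topologies on both sides with the classical faithful flatness statement at each finite-dimensional level.
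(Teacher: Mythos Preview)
Your proof is correct and follows essentially the same route as the paper. For part~\eqref{pr:subextensioninvariants:a} the paper factors $\can_0$ through $\Hom_K(L^{J(L_0)},L)$ exactly as you do and observes that both factors must be isomorphisms; for part~\eqref{pr:subextensioninvariants:b} the paper uses the same triangle $\can_C = \can_{L^C}\circ\pi$ to deduce bijectivity of $\can_{L^C}$ and of~$\pi$, and then simply asserts $C=J(L^C)$ from the latter without spelling out the faithful-flatness descent you carry out---so your ``main technical obstacle'' is in fact a detail the paper suppresses entirely, and your level-wise argument is the right way to fill it in.
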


\begin{proof}
\eqref{pr:subextensioninvariants:a}
As $L_0 \subseteq L^{J(L_0)}$, we can consider the composition
\[ \can_0: \xymatrix{ L \hot H/J(L_0) \ar@{->>}[rr]^-{\can_{J_0}} && \Hom_K(L^{J(L_0)},L) \ar@{->>}[rr]^-{\res} && \Hom_K(L_0,L)}. \]
The assumption that $L_0$ is an $H$-subextension implies that this composition is an isomorphism, whence both maps are isomorphisms.
Consequently, $L_0 = L^{J(L_0)}$.

\eqref{pr:subextensioninvariants:b}
Consider the following diagram
\[
\xymatrix{
\tilde L\hot H/C \ar[rr]^-{\can_C} \ar@{->>}[dr]_-\pi && \Hom_K(L^C,L) \\
& \tilde L\hot H/J(L^C) \ar[ur]_-{\can_{L^C}}
}
\]
Since $\can_C$ is bijective by assumption and $\pi$ is surjective, $\can_{L^C}$ is bijective, so in particular injective, hence $L^C/K$ is an $H$-subextension of~$L$ by Lemma~\ref{le:propertiesJ(L0)}.
Now $\pi$ is the composite isomorphism $\can_{L^C}^{-1} \circ \can_C$, from which we conclude $C=J(L^C)$.
\end{proof}

To get the full correspondence one then also needs that any closed ideal two-sided proartinian coideal in $H$ arises this way.
We will need results for it that will follow from infinite Greither--Pareigis theory, allowing us to show that $L^C$ is an $H$-subextension if $C$ is any closed ideal two-sided proartinian coideal in $H$.

\subsection{Infinite Greither--Pareigis theory}

The aim of this section is to connect finite and infinite Hopf-Galois theory.
Let $L/K$ be a (possibly infinite) separable extension, and $\tilde L$ be a normal closure over~$K$. Then we know by classical (infinite) Galois theory that $G=\Gal(\tilde L/K)$ is a profinite group containing $G'=\Gal(\tilde L/L)$ as a closed subgroup, and we have isomorphisms
\begin{multline*}
\Hom_{K-\Alg}(L,\tilde L) \cong  \Hom_{K-\Alg}(\bigcup_{N \lhd G \textnormal{ open}} L^{N}, \tilde{L}) \cong  \Hom_{K-\Alg}(\bigcup_{N \lhd G \textnormal{ open}} \tilde{L}^{G'N}, \tilde{L})\\
\cong \plim{N \lhd G \textnormal{ open}} G/(G'N) =  G/G'
\end{multline*}
as profinite left $G$-sets and 
\[\Hom_K(L,\tilde L)\cong \plim{N \lhd G \textnormal{ open}} \tilde L[G/(G'N)] \cong \tilde L\llbracket G/G'\rrbracket\]
as proartinian $\tilde L$-vector spaces and left $\tilde L\llbracket G\rrbracket$-modules, where $\tilde L\llbracket G\rrbracket$ is the completed group algebra.

\begin{prop}\label{prop:GPinf}
Let $L/K$ be a separable $H$-Galois extension as in Definition~\ref{definfiniteHG} and use the notation just introduced. Then

\begin{enumerate}[(a)]
\item \label{prop:GPinf:a} For any open proartinian Hopf ideal $I\subset H$, $\Hom_{K-\Alg}(L^I,\tilde L)$ is a base of grouplike elements for $\Hom_K(L^I,\tilde L)$ as $\tilde L$-coalgebra.

\item \label{prop:GPinf:b} For any open proartinian Hopf ideal $I\subset H$, $\tilde L\ot_K H/I$ is isomorphic as a Hopf $\tilde L$-algebra to a group algebra $\tilde L[N_I]$, for some finite group $N_I$.

\item \label{prop:GPinf:c} The groups $N_I$ form a projective system of finite groups when $I$ runs through the open proartinian Hopf ideals of~$H$ and hence the projective limit $N=\plim{I}N_I$ is a profinite group.

\item \label{prop:GPinf:d} $\tilde L\hot_K H$ is isomorphic as proartinian Hopf $\tilde L$-algebra to the completed group algebra $\tilde L \llbracket N\rrbracket$. Moreover the natural left action of $G$ on the first tensorant of $\tilde L\hot_K H$ induces a continuous left $G$-action on $N$.

\item \label{prop:GPinf:e} From the canonical map, we obtain a homeomorphism of profinite sets
\[\beta:N\cong G/G'.\]

\item \label{prop:GPinf:f} Since $\Hom_K(L,\tilde L)\cong \tilde L\llbracket G/G'\rrbracket$ is a right proartinian $\tilde L\hot_K H\cong\tilde L \llbracket N \rrbracket$-module $\tilde L$-coalgebra by Lemma~\ref{propscan}, we obtain a continuous right action of $N$ on $G/G'$, which satisfies the compatibility relation
\[ g.\big( (hG').n \big) = (ghG').(g.n) \]
where $n \in N$ and $g,h \in G$.
Moreover, the map $\beta$ is right $N$-equivariant and sends $n \in N$ to $(1G').n$

\item \label{prop:GPinf:g} For any open proartinian Hopf ideal $I\subset H$, there exists an open normal subgroup $V\subset N$ such that $N/V\cong N_I$ and $\tilde L\ot_K H/I\cong \tilde L[N/V]$ as Hopf $\tilde L$-algebras.
\end{enumerate}
\end{prop}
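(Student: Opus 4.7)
The plan is to establish (a)--(g) sequentially, with the crucial ingredient being that $\widetilde{\can}\colon\tilde L\hot_K H\to\Hom_K(L,\tilde L)$ is an isomorphism of proartinian $\tilde L$-coalgebras by Lemma~\ref{propscan}(d). For part (a), since $L/K$ is separable and, by Lemma~\ref{propscan}(b), $L^I$ is finite-dimensional over $K$, the extension $L^I/K$ is finite separable. By Lemma~\ref{grouplikes}(c) the grouplike elements of $\Hom_K(L^I,\tilde L)$ are exactly $\Hom_{K-\Alg}(L^I,\tilde L)$, which has cardinality $[L^I:K]=\dim_{\tilde L}\Hom_K(L^I,\tilde L)$ by separability; being linearly independent, they form a basis.

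The main obstacle is part (b), because $\widetilde{\can}_I$ is only known a priori to be surjective (cf.\ Example~\ref{ex:non-iso}). The strategy is to realize $\tilde L\otimes H/I$ as a quotient coalgebra of some $\Hom_K(L^J,\tilde L)$ and then invoke (a) together with Proposition~\ref{subquot}(a). Concretely, the closure $\tilde L\hot I$ (the kernel of $\tilde L\hot H\twoheadrightarrow \tilde L\otimes H/I$) is an open proartinian coideal of $\tilde L\hot H$; since $\widetilde{\can}$ is both a homeomorphism and a coalgebra isomorphism, its image $\widetilde{\can}(\tilde L\hot I)$ is an open proartinian coideal of $\Hom_K(L,\tilde L)\cong\plim_{J\in\Lambda_H}\Hom_K(L^J,\tilde L)$. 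Openness forces this image to contain $\ker(\res_{L^J})$ for some $J\in\Lambda_H$, yielding a surjection of $\tilde L$-coalgebras
\[\Hom_K(L^J,\tilde L)\twoheadrightarrow \Hom_K(L,\tilde L)/\widetilde{\can}(\tilde L\hot I)\cong \tilde L\otimes H/I.\]
Applying Proposition~\ref{subquot}(a), $\tilde L\otimes H/I$ inherits a basis of grouplike elements; as a finite-dimensional Hopf $\tilde L$-algebra these grouplikes form a group $N_I$, giving $\tilde L\otimes H/I\cong\tilde L[N_I]$.

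The remaining parts proceed more mechanically. For (c), inclusions $I\subseteq I'$ of open proartinian Hopf ideals induce Hopf algebra surjections $\tilde L[N_I]\twoheadrightarrow\tilde L[N_{I'}]$ and hence group surjections $N_I\twoheadrightarrow N_{I'}$, so $N:=\plim N_I$ is a profinite group. Part (d) is obtained by passing to projective limits, yielding $\tilde L\hot H\cong\tilde L\llbracket N\rrbracket$ as proartinian Hopf $\tilde L$-algebras; the natural $G$-action on the first tensorand respects the Hopf structure of each $\tilde L\otimes H/I$ and hence induces continuous group automorphisms of each $N_I$ (as in Theorem~\ref{th:GP}(b)) and of $N$. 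For (e), applying the grouplike functor from Corollary~\ref{co:grouplikes} to $\widetilde{\can}$ yields a continuous bijection $\beta\colon N\to\Hom_{K-\Alg}(L,\tilde L)\cong G/G'$, which is a homeomorphism as a continuous bijection from a compact space to a Hausdorff one. Part (f) follows by restricting the compatibility in Lemma~\ref{propscan}(e) to grouplikes, paralleling Theorem~\ref{th:GP}(e); note that $\beta(n)=\widetilde{\can}(n)=\id_L\cdot n$ corresponds to $(1G').n$ under $G/G'\cong\Hom_{K-\Alg}(L,\tilde L)$. Finally for (g), setting $V:=\ker(N\twoheadrightarrow N_I)$ gives an open (since $N_I$ is finite) normal (as a kernel) subgroup with $N/V\cong N_I$ and $\tilde L\otimes H/I\cong\tilde L[N/V]$.
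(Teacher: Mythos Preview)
Your proof is correct and follows essentially the same approach as the paper. In particular, for the crucial part~(b) you and the paper use the same idea: exploit that $\widetilde{\can}$ is a homeomorphism of proartinian $\tilde L$-coalgebras to exhibit $\tilde L\otimes H/I$ as a coalgebra quotient of some $\Hom_K(L^{J},\tilde L)$, and then invoke part~(a) together with Proposition~\ref{subquot}(a); the paper phrases this via the continuity of $\widetilde{\can}^{-1}$ (producing a factorization through a finite level), while you phrase it via the image of the open coideal under $\widetilde{\can}$, but these are the same argument.
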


\begin{proof}
\eqref{prop:GPinf:a} By Lemma~\ref{propscan}\eqref{propscan:b} we know that $L^I$ is of finite dimension, and hence $\Hom_K(L^I, \tilde L)$ has a base of grouplike elements by separability and Lemma~\ref{grouplikes}, which are given exactly by the $K$-algebra morphisms from $L^I$ to $\tilde L$.

\eqref{prop:GPinf:b} Using a base-extension from $L$ to $\tilde L$, we obtain a canonical map
\[\widetilde \can:\tilde L\hot_K H\to \Hom_K(L,\tilde L),\]
which is a morphism of proartinian $\tilde L$-coalgebras and a homeomorphism.
Its inverse $\widetilde\can^{-1}$ is also a morphism of proartinian $\tilde L$-coalgebras by Lemma \ref{lem:isoartcoalg}. 
In particular, $\widetilde\can^{-1}$ is continuous and preserves the comultiplication. The continuity of $\widetilde\can$ implies that there exists an open proartinian Hopf ideal $I'$ in $H$ and a (surjective, coalgebra) morphism $(\widetilde\can^{-1})_I$ such that the following diagram commutes
\[
\xymatrix{
\tilde L\hot_K H \ar[d]^{\pi_{I}} && \Hom_K(L,\tilde L) \ar[ll]_-{\widetilde \can^{-1}} \ar[d]\\
\tilde L\ot H/I && \Hom_K(L^{I'},\tilde L). \ar[ll]^-{(\widetilde\can^{-1})_{I}}
}
\]
Since $(\widetilde\can^{-1})_I$ is a surjective coalgebra morphism, and $\Hom_K(L^{I'},\tilde L)$ has a base of grouplike elements by part~\eqref{prop:GPinf:a}, it follows by Proposition \ref{grouplikesHopf} that $\tilde L\ot H/I$ also has a base of grouplike elements. Hence, $\tilde L\ot H/I$, being a finite dimensional Hopf algebra with a base of grouplike elements, is a group algebra $\tilde L[N_I]$ for some finite group $N_I$.

\eqref{prop:GPinf:c} We know that that there is a projective system of Hopf algebras $\tilde L\ot H/I$. Since each of these Hopf algebras is a finite group algebra by part~\eqref{prop:GPinf:b}, and morphisms of Hopf algebras preserve grouplike elements, we obtain henceforth a projective system of finite groups.

\eqref{prop:GPinf:d} The first part follows from the fact that $L\hot_K H$ and $\tilde L\llbracket N\rrbracket$ arise as the projective limit of isomorphic projective systems of finite dimensional Hopf $K$-algebras. The second part is clear, since $G$ acts on $\tilde L\hot_K H$ by proartinian coalgebra morphisms, hence the $G$-action on $\tilde L\hot_K H$ turns grouplikes into grouplikes by Corollary~\ref{co:grouplikes} and leads to a continuous action of $G$ on $N$.

\eqref{prop:GPinf:e} Combining the above, we obtain a sequence of isomorphisms
$$\tilde L\llbracket N\rrbracket \cong \tilde L\hot H \stackrel{\widetilde\can}{\cong} \Hom_K(L,\tilde L)\cong \tilde L\llbracket G/G'\rrbracket.$$
The first isomorphism is an isomorphism of proartinian Hopf algebras. The last isomorphism is an isomorphism of proartinian coalgebras. Finally, the canonical map $\widetilde \can$ is a morphism of proartinian coalgebras as well, by Lemma \ref{propscan}. The combined isomorphism is therefore a morphism of proartinian coalgebras. Since $N$ and $G/G'$ are exactly the grouplike elements of the first and last proartinian coalgebra (see Example~\ref{ex:grouplikes}), we obtain the desired homeomorphism of profinite sets.

\eqref{prop:GPinf:f} 
As in the finite case, since $\Hom_K(L,\tilde L)$ is a right proartinian $\tilde L\hot_K H$-module $\tilde L$-coalgebra, the grouplike elements of $N$, being grouplike elements of $\tilde L\hot_K H$ act on $\Hom_K(L,\tilde L)$ by proartinian coalgebra morphisms, and therefore send grouplikes to grouplikes (see Corollary~\ref{co:grouplikes}). A $K$-linear map $f\in\Hom_K(L,\tilde L)$ is grouplike if and only if the restriction of $f$ to $L^I$ is grouplike in $\Hom_K(L^I,\tilde L)$ for any open Hopf ideal $I\in\Lambda_H$, which means that this restriction is an algebra map (see Lemma~\ref{grouplikes}\eqref{grouplikes:c}). Since $L=\bigcup_{I\in \Lambda_H} L^I$, this means that $f$ itself is an algebra map.
Since $\Hom_{K-Alg}(L,\tilde L) \cong G/G'$ by classical Galois theory, we obtain the required continuous action of $N$ on $G/G'$.
The formulae are proven exactly as in the finite case (see Theorem~\ref{th:GP}).

\eqref{prop:GPinf:g}
For any open proartinian Hopf ideal $I\in \Lambda_H$, we know by part \eqref{prop:GPinf:b} that $\tilde L\ot_K H/I\cong \tilde L[N_I]$ is a finite dimensional group algebra, and by part \eqref{prop:GPinf:d} that $\tilde L\hot_K H\cong \tilde L\llbracket N\rrbracket$.
Via these isomorphisms, the natural projection $\tilde L\hot_K H \twoheadrightarrow \tilde L\ot_K H/I$ translates to the natural projection $\tilde L\llbracket N\rrbracket \twoheadrightarrow \tilde L[N_I]$.
The latter map is induced from a surjection of groups $N \twoheadrightarrow N_I$ (cf.\ Example~\ref{ex:grouplikes} and Corollary~\ref{co:grouplikes}). Taking $V$ to be its kernel, we find $N_I\cong N/V$ and, again by Corollary~\ref{co:grouplikes}, $\tilde L\ot I\cong \tilde L \llbracket V \rrbracket$ as needed.
\end{proof}

\begin{prop}\label{prop:LIsub}
Let $L/K$ be a separable Hopf-Galois extension for a proartinian Hopf algebra~$H$. Then for any open left ideal two-sided proartinian coideal $I \subseteq H$, the canonical map
\[\can_I: L \otimes_K H/I \twoheadrightarrow \Hom_K(L^I,L). \]
is an isomorphism.
\end{prop}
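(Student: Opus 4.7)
The plan is to reduce the injectivity of $\can_I$ to the infinite Greither--Pareigis machinery from the preceding subsection, paralleling the argument of Lemma~\ref{lem:main} in the finite case.

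First, by the same diagram as in \eqref{defcanproartinian}, $\can_I$ is the quotient of the homeomorphism $\can$ by surjective maps, hence itself surjective. Moreover, by faithful flatness of $\tilde L/L$ (with $\tilde L$ a normal closure of $L/K$), the map $\can_I$ is injective if and only if its base change $\widetilde{\can}_I \colon \tilde L \ot H/I \to \Hom_K(L^I, \tilde L)$ is injective. So the task reduces to proving injectivity of $\widetilde{\can}_I$.

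Second, since the open proartinian Hopf ideals form a basis of open neighbourhoods of $0$ in $H$ (Lemma~\ref{lem:openclosedideal}), I choose an open proartinian Hopf ideal $J \subseteq I$, so that $H/J$ is finite dimensional and $I/J$ is a left ideal two-sided coideal of $H/J$. By Proposition~\ref{prop:GPinf}\eqref{prop:GPinf:g}, $\tilde L \ot H/J \cong \tilde L[N_J]$ with $N_J = N/V_J$, and the finite correspondence (Lemma~\ref{H/I=N/V}, applied to the finite group algebra $\tilde L[N_J]$) produces a $G$-equivariant subgroup $\bar V \subseteq N_J$ with $\tilde L \ot H/I \cong \tilde L[N_J/\bar V]$. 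Pulling $\bar V$ back through $N \twoheadrightarrow N_J$ yields an open $G$-equivariant subgroup $V \subseteq N$ with $\tilde L \ot H/I \cong \tilde L[N/V]$. Applying Lemma~\ref{lem:gp-act} to $(G,G',N,\beta,V)$ then produces an open subgroup $U \subseteq G$ containing $G'$ with $[G:U]$ finite and a $G$-equivariant homeomorphism $N/V \cong G/U$, whence together with classical Galois theory one obtains an isomorphism $\gamma \colon \tilde L \ot H/I \xrightarrow{\cong} \Hom_K(\tilde L^U, \tilde L)$, where $\tilde L^U$ is a finite intermediate field of $L/K$.

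Third, I claim that the square
\[
\xymatrix{
\tilde L \hot H \ar[rr]^-{\widetilde{\can}}_-\sim \ar@{->>}[d] && \Hom_K(L, \tilde L) \ar@{->>}[d]^-{\res_{\tilde L^U}}\\
\tilde L \ot H/I \ar[rr]^-{\gamma}_-\sim && \Hom_K(\tilde L^U, \tilde L)
}\]
commutes, the left vertical being the quotient by (the image of) $\tilde L \hot I$ and the right being restriction to $\tilde L^U$. Indeed, the composite $\tilde L \hot H \cong \tilde L \llbracket N \rrbracket \cong \tilde L \llbracket G/G' \rrbracket \cong \Hom_K(L, \tilde L)$ coincides with $\widetilde{\can}$ by Proposition~\ref{prop:GPinf}\eqref{prop:GPinf:e}--\eqref{prop:GPinf:f}, and both verticals are obtained by quotienting this composite by a single closed subspace, namely the kernel of $\tilde L \llbracket N \rrbracket \twoheadrightarrow \tilde L[N/V]$ on the left and that of $\tilde L \llbracket G/G' \rrbracket \twoheadrightarrow \tilde L[G/U]$ on the right, which match through the bijection $N/V \cong G/U$ of Lemma~\ref{lem:gp-act}. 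As in Lemma~\ref{lem:main}, for any $i \in I$ and any injective $f \in \Hom_K(L, \tilde L)$ with preimage $\psi := \widetilde{\can}^{-1}(f)$, the product $\psi(1 \ot i) \in \tilde L \hot H$ lies in $\tilde L \hot I$ (because $I$ is a left ideal, so $h_j i \in I$ for every summand $y_j \ot h_j$ of $\psi$); its image in $\tilde L \ot H/I$ therefore vanishes, and by the commutativity of the square the map $x \mapsto f(i \cdot x)$ restricted to $\tilde L^U$ is zero. Injectivity of $f$ then forces $i \cdot x = 0$ for every $x \in \tilde L^U$, giving $\tilde L^U \subseteq L^I$.

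Finally, this inclusion produces a restriction $\res \colon \Hom_K(L^I, \tilde L) \twoheadrightarrow \Hom_K(\tilde L^U, \tilde L)$ through which $\gamma$ factors as $\gamma = \res \circ \widetilde{\can}_I$. Since $\gamma$ is bijective while $\widetilde{\can}_I$ is already surjective, $\widetilde{\can}_I$ must be bijective as well; comparing $K$-dimensions across the bijective $\res$ moreover yields $L^I = \tilde L^U$, and faithful flatness gives the original statement. The main obstacle is the bookkeeping in the third step: verifying that the Greither--Pareigis isomorphism $\gamma$ is compatible with the restriction map $\res_{\tilde L^U}$ coming from $\widetilde{\can}$. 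Once this is in hand, everything else is a direct adaptation of the finite argument together with the infinite Greither--Pareigis dictionary of Proposition~\ref{prop:GPinf} and the group-theoretic Lemma~\ref{lem:gp-act}.
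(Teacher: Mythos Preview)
Your proof is correct and follows essentially the same route as the paper: both pass through the infinite Greither--Pareigis dictionary (Proposition~\ref{prop:GPinf}), invoke the group-theoretic Lemma~\ref{lem:gp-act}, and then reproduce the argument of Lemma~\ref{lem:main}. The only organisational difference is that the paper first treats the case where $I$ is an open proartinian Hopf ideal (so that Proposition~\ref{prop:GPinf}\eqref{prop:GPinf:g} applies directly) and then reduces the general case to the \emph{finite} $H/J$-Galois extension $L^J/K$ by writing $H/I\cong (H/J)/(I/J)$ and $L^I=(L^J)^{I/J}$, invoking the finite Lemma~\ref{lem:main} for $H/J$; you instead treat both cases at once by passing through $\tilde L[N_J]$ and Lemma~\ref{H/I=N/V}, which is equally valid.

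One point of care: in your third step you write that $\psi(1\ot i)$ lies in $\tilde L\hot I$ ``because $I$ is a left ideal, so $h_j i\in I$ for every summand $y_j\ot h_j$ of~$\psi$''. Since $\psi$ lives in the \emph{completed} tensor product, it need not be a finite sum, so this phrasing is imprecise. The conclusion is nevertheless correct: project $\psi$ to $\bar\psi\in\tilde L\ot H/J$, which \emph{is} a finite sum; then $\bar\psi(1\ot\bar\imath)\in\tilde L\ot(I/J)$ since $I/J$ is a left ideal in $H/J$, and hence vanishes in $\tilde L\ot H/I$. Equivalently, the projection $\tilde L\hot H\to\tilde L\ot H/I$ is left $\tilde L\hot H$-linear and kills $1\ot i$.
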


\begin{proof}
We first assume that $I$ is an open proartinian Hopf ideal.
Proposition~\ref{prop:GPinf} ensures that the hypotheses of Lemma~\ref{lem:gp-act} are fulfilled.
From that lemma and Proposition~\ref{prop:GPinf}\eqref{prop:GPinf:g}, it follows that we have the $K$-linear isomorphism
\[ \tilde L \otimes_K H/I \cong \tilde L [N/V] \cong  \tilde L[G/U] \cong \Hom_K(L^U,\tilde{L}). \]
This isomorphism allows us to apply verbatim the arguments of the proof of Lemma~\ref{lem:main} in the finite case in order to conclude $L^I = L^U$ and $\can_I$ is bijective.
For the general case, when $I$ is an open left ideal two-sided proartinian coideal of~$H$, there is a proartinian Hopf ideal $J$ contained in~$I$ since the open proartinian Hopf ideals form a base of open neighbourhoods of~$0$. As $L^J$ is $H/J$-Galois by the preceding case, $\can_I$ is an isomorphism because $H/I \cong (H/J)/(I/J)$ and $L^I = (L^J)^{(I/J)}$ and we can apply the Galois correspondence of  Lemma~\ref{lem:main} for the finite Hopf algebra $H/J$ and its left ideal two-sided coideal $I/J$.
\end{proof}

\begin{cor}\label{cor:LIsub}
Let $L/K$ be a separable Hopf-Galois extension for a proartinian Hopf algebra~$H$. Then for any open proartinian Hopf ideal $I \subseteq H$, the finite extension $L^I/K$ is $H/I$-Galois and
an $H$-subextension of~$L$.
\end{cor}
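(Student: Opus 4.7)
The plan is to deduce both assertions from Proposition~\ref{prop:LIsub}, which provides that $\can_I: L \otimes_K H/I \to \Hom_K(L^I,L)$ is already an isomorphism.

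First I would set up the finite-dimensional bookkeeping. By Lemma~\ref{propscan}\eqref{propscan:b}, $L^I$ is finite dimensional over~$K$. Since $I$ is a Hopf ideal of~$H$, it is in particular a two-sided ideal, so for $h \in H$, $x \in L^I$ and $i \in I$ we have $i\cdot(h\cdot x) = (ih)\cdot x = 0$, which gives $H \cdot L^I \subseteq L^I$, i.e.\ $L^I$ is $H$-stable. As $I$ acts trivially on $L^I$, this action descends to an action of the finite dimensional Hopf algebra $H/I$ on $L^I$, turning $L^I$ into an $H/I$-module $K$-algebra.

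Next I would show that $L^I$ is an $H$-subextension. Since $I \subseteq J(L^I)$ and $I$ is open, $J(L^I)$ is open as well, so $H/J(L^I)$ is finite dimensional and the completed tensor products coincide with ordinary tensor products. The projection $H/I \twoheadrightarrow H/J(L^I)$ induces a surjection $\pi: L \otimes_K H/I \twoheadrightarrow L \otimes_K H/J(L^I)$ through which $\can_I$ factorises as $\can_0 \circ \pi$. Since $\can_I$ is an isomorphism by Proposition~\ref{prop:LIsub}, $\pi$ must be injective; $K$-flatness of $L$ then forces $I = J(L^I)$, so $\can_0$ itself is an isomorphism, and Lemma~\ref{le:propertiesJ(L0)}\eqref{le:propertiesJ(L0):d} yields that $L^I$ is an $H$-subextension.

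Finally, for the $H/I$-Galois property I would use faithfully flat descent along $L^I \hookrightarrow L$. Because $L^I$ is finite dimensional over~$K$, there is a natural isomorphism of left $L$-modules $\Hom_K(L^I,L) \cong L \otimes_{L^I} \End_K(L^I)$, and under it $\can_I$ identifies with $\mathrm{id}_L \otimes_{L^I} \can^0$, where $\can^0: L^I \otimes_K H/I \to \End_K(L^I)$ is the Galois map for $L^I/K$. As $\can_I$ is an isomorphism and $L^I \hookrightarrow L$ is faithfully flat, $\can^0$ is an isomorphism, so $L^I/K$ is $H/I$-Galois in the sense of Definition~\ref{defiHGal}.

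I do not expect a genuine obstacle here: Proposition~\ref{prop:LIsub} already does all the hard work (including invoking infinite Greither--Pareigis theory). The only point that requires a small verification is the compatibility between $\can_I$ and $\can^0$ under the identification $\Hom_K(L^I,L) \cong L \otimes_{L^I} \End_K(L^I)$, but this amounts to unwinding the formula $\can_I(x\otimes \bar h)(y) = x(h\cdot y)$ on elementary tensors $y \otimes \bar h$ with $y \in L^I$.
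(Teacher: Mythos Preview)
Your proof is correct and follows essentially the same route as the paper: both rely on Proposition~\ref{prop:LIsub} for the key input that $\can_I$ is an isomorphism, then deduce the two conclusions from it. The paper is simply terser, citing Proposition~\ref{pr:subextensioninvariants}\eqref{pr:subextensioninvariants:b} for the $H$-subextension claim (whose argument you have effectively inlined), and asserting the $H/I$-Galois property without spelling out the faithfully-flat descent that you make explicit; your identification $\Hom_K(L^I,L)\cong L\otimes_{L^I}\End_K(L^I)$ is exactly the mechanism behind Lemma~\ref{lem:surj} and Proposition~\ref{prop:HopfIdeal}\eqref{prop:HopfIdeal:c} in the finite case.
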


\begin{proof}
By Proposition~\ref{prop:LIsub}, $L^I/K$ is $H/I$-Galois, and we conclude by Proposition~\ref{pr:subextensioninvariants}\eqref{pr:subextensioninvariants:b}.
\end{proof}

\subsection{The infinite Hopf-Galois correspondence}

\begin{cor}\label{cor:inf-sub}
Let $L/K$ be a separable Hopf-Galois extension for a proartinian Hopf algebra~$H$ and let $C$ be a closed ideal two-sided proartinian coideal in~$H$. Then the following statements hold.
\begin{enumerate}[(a)]
\item\label{cor:inf-sub:a} $L^C$ is an $H$-subextension, $J(L^C)=C$ and $L^C = \bigcup_J L^J$, where $J$ runs through the open ideals two-sided proartinian coideals in~$H$ containing~$C$.
\item\label{cor:inf-sub:b} Moreover, $C$ is a right ideal if and only if $L^C$ is $H$-stable, i.e.\ if and only if $L^C$ is {\em $H$-normal}.
\end{enumerate}
\end{cor}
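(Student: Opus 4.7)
My strategy is to reduce to the open case handled in Proposition~\ref{prop:LIsub} by expressing $\can_C$ as a projective limit of the maps $\can_J$ for open $J \supseteq C$, and then to invoke Proposition~\ref{pr:subextensioninvariants}\eqref{pr:subextensioninvariants:b}.

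First I would establish $L^C = \bigcup_J L^J$, with $J$ ranging over open left ideals two-sided proartinian coideals containing~$C$. The inclusion $\supseteq$ is immediate from $C \subseteq J$. For $\subseteq$, given $x \in L^C$, Lemma~\ref{lem:LI} furnishes an open proartinian Hopf ideal $I_x$ with $x \in L^{I_x}$ (this uses that $L$ is discrete as an $H$-module). Setting $J := I_x + C$, one verifies that $J$ is open (a subspace of a topological vector space containing an open neighbourhood of~$0$ is a union of translates of it, hence open), a left ideal (sum of left ideals), and a two-sided proartinian coideal (on each finite quotient $H/U$, $\pi_U(J) = \pi_U(I_x) + \pi_U(C)$ is a sum of two-sided coideals in $H/U$). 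Clearly $x \in L^J$.

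Next, since $C$ is closed, Lemma~\ref{lem:openclosedideal}\eqref{lem:openclosedideal:b} ensures that $H/C$ is a proartinian left $H$-module coalgebra whose open proartinian coideals pull back bijectively to the $J$'s above, giving $L \hot_K H/C \cong \plim{J} L \otimes_K H/J$. Dually, $L^C = \bigcup_J L^J$ yields $\Hom_K(L^C,L) \cong \plim{J} \Hom_K(L^J,L)$. The canonical map $\can_C$ is then the projective limit of the maps $\can_J : L \otimes_K H/J \to \Hom_K(L^J,L)$, each of which is an isomorphism by Proposition~\ref{prop:LIsub}. Hence $\can_C$ is an isomorphism and Proposition~\ref{pr:subextensioninvariants}\eqref{pr:subextensioninvariants:b} delivers both that $L^C$ is an $H$-subextension and that $J(L^C) = C$.

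\textbf{Plan for part (b).} Granting $J(L^C) = C$ from part~(a), the equivalence is elementary. If $L^C$ is $H$-stable, Lemma~\ref{le:propertiesJ(L0)}\eqref{le:propertiesJ(L0):b} gives directly that $C = J(L^C)$ is a right ideal. Conversely, if $C$ is a right ideal, then for any $h \in H$, $x \in L^C$ and $c \in C$ one has $c\cdot(h\cdot x) = (ch)\cdot x = 0$ because $ch \in C$, so $h\cdot x \in L^C$, proving $L^C$ is $H$-stable.

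\textbf{The main obstacle} is the interchange of limits in part~(a), making precise that $\can_C$ coincides with the projective limit of the finite-level canonical maps $\can_J$. This requires matching carefully the proartinian structure on $H/C$ (coming from $C$ being closed) with the inductive presentation $L^C = \bigcup_J L^J$, and checking cofinality of $\{I+C : I \in \Lambda_H\}$ inside the set of open left ideals two-sided coideals containing~$C$; both points are standard but must be spelled out before one can legitimately invoke Proposition~\ref{prop:LIsub} term-by-term.
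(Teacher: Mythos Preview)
Your proposal is correct and follows essentially the same route as the paper: reduce to the open case via the cofinal family $\{C+I : I \in \Lambda_H\}$, invoke Proposition~\ref{prop:LIsub} at each finite level, pass to the projective limit, and conclude with Proposition~\ref{pr:subextensioninvariants}\eqref{pr:subextensioninvariants:b}. The only structural difference is one of ordering: you first prove the equality $L^C = \bigcup_J L^J$ directly from Lemma~\ref{lem:LI} and then identify $\can_C$ with $\plim{J}\can_J$, whereas the paper sets up the diagram $L \hot H/C \xrightarrow{\can_C} \Hom_K(L^C,L) \xrightarrow{\res} \Hom_K(\bigcup_I L^{C+I},L)$, shows the composite is an isomorphism (via the same projective-limit identification), and deduces the equality $L^C = \bigcup_I L^{C+I}$ as a consequence of both maps being forced to be bijections. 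Your ordering is arguably cleaner, since establishing the union first makes the projective-limit matching of source and target transparent; the paper's route has the slight advantage of not needing Lemma~\ref{lem:LI} separately at that point. For part~(b) the arguments coincide verbatim.
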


\begin{proof}
\eqref{cor:inf-sub:a}
By Proposition~\ref{prop:ProVect}\eqref{prop:ProVect:a} we know
\[C  = \bigcap_{I \in \Lambda_H} (C+I).\]
As a sum of an open and a closed ideal two-sided proartinian coideal, $C+I$ is an open ideal two-sided proartinian coideal in~$H$ containing~$C$.

Due to the inclusion $\bigcup_{I \in \Lambda_H} L^{C+I} \subseteq L^C$, we have the following diagram
\begin{equation}\label{eq:inf-sub}
\xymatrix{ 
L \hot_K H \ar[rr]_\cong^{\can} \ar@{->>}[d]_-{id\hot_K \pi_C} && \End_K(L) \ar@{->>}[d]^-{res} \\
L \hot_K H/C \ar@{->}[rr]^-{\can_C} && \Hom_K(L^C,L) \ar@{->>}[rr]^-{res} && \Hom_K(\bigcup_{I \in \Lambda_H} L^{C+I},L)}
\end{equation}
By the commutativity of the left square, $\can_C$ is surjective (note that this is exactly the same argument as in Proposition~\ref{propscan}\eqref{propscan:a}).
By Proposition~\ref{prop:LIsub}, for each $I \in \Lambda_H$, the extension $L^{C+I}$ is an $H$-subextension, whence we have the canonical isomorphism
$L \ot H/(C+I) \cong \Hom_K(L^{C+I},L)$.
Taking the projective limit of these leads to the isomorphism
\[\plim{I \in \Lambda_H} L \ot H/(C+I) \cong \plim{I \in \Lambda_H} \Hom_K(L^{C+I},L) \cong \Hom_K(\bigcup_{I \in \Lambda_H} L^{C+I},L).\]
After identification of $\plim{I \in \Lambda_H} L \ot H/(C+I)$ with $L \hot H/C $, this is exactly the horizontal composition of diagram \eqref{eq:inf-sub}, which is thus an isomorphism.
This implies the desired equality $L^C = \bigcup_J L^J$ as in the statement of the corollary, as well as the fact that $\can_C$ is bijective.
We conclude by applying Proposition~\ref{pr:subextensioninvariants}\eqref{pr:subextensioninvariants:b}.

\eqref{cor:inf-sub:b}
This can be proved by direct computation as in the finite case.
First assume that $C$ is a right ideal. Then for any $c\in C$, $h\in H$ and $x\in L^C$, we find that $c\cdot (h\cdot x)=(ch)\cdot x=0$, because $ch\in C$, and hence $h\cdot x\in L^C$.
Conversely, suppose that $L^C$ is $H$-stable, then for any $h \in H$ and any $x \in L^C$, we have $hx \in L^C$; consequently, for any $c\in C$, it follows $c\cdot (h\cdot x)=(ch)\cdot x=0$. This gives $ch \in J(L^C)=C$.
\end{proof}

We are now ready to prove the infinite Hopf-Galois correspondence. To this end, let us make the following remark. If $H$ is a proartinian Hopf algebra and $L/K$ a separable $H$-Galois extension, then by Proposition~\ref{prop:LIsub}, $L^I$ is $H/I$-Galois for any open Hopf ideal $I\in \Lambda_H$. This implies by Lemma~\ref{lem:cocommutativeHopf} that $H/I$ is cocommutative, hence we can also view $H$ as a ``cocomutative proartinian Hopf algebra''. Now consider any two-sided ideal two-sided proartinian coideal $J$ in $H$. Again by the above, we know that $(J+I)/I$ is a two-sided ideal two-sided coideal in $H/I$. Since $H/I$ is cocommutative, this implies that $(J+I)/I$ is also stable under the antipode, i.e.\ a Hopf ideal. This then means by definition that $J$ is a proartinian Hopf ideal in $H$. In other words, as in the finite case, the two-sided ideal two-sided proartinian coideals and Hopf ideals coincide in our setting.

\begin{proof}[Proof of Theorem~\ref{theo:inf-main}.]
As in the finite case, we define $\Phi(L_0)=J(L_0)$ and $\Psi(I)=L^I$.
By Corollary~\ref{cor:inf-sub}, we find that the maps $\Psi$ are well-defined, and $\Psi\circ \Phi(I)=I$. On the other hand, Lemma~\ref{le:propertiesJ(L0)} tells us that $\Phi$ is well-defined and by Proposition~\ref{pr:subextensioninvariants} we have that $\Phi\circ \Psi(L_0)=L_0$.
\end{proof}

We now generalise Lemma~\ref{H/I=N/V} to the infinite case.

\begin{prop}\label{inf:H/I=N/V}
There is a bijective correspondence between the sets of:
\begin{itemize}
\item closed proartinian left ideals two-sided coideals $C \subseteq H$, and
\item $G$-equivariant closed subgroups $V \subseteq N$
\end{itemize}
which is characterised by the property that the isomorphism from Proposition~\ref{prop:GPinf} $\tilde L \hot H \cong \tilde L \llbracket N \rrbracket$ descends to an isomorphism $\tilde L \hot H/C \cong \tilde L \llbracket N/V \rrbracket$.
Moreover, $C \subset H$ is a closed proartinian Hopf ideal if and only if $V$ is a closed normal subgroup of~$N$. In that case, the above isomorphism is an isomorphism of proartinian $\tilde{L}$-Hopf algebras.
Furthermore, these statements continue to hold with `closed' replaced by `open'.
\end{prop}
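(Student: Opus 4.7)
My plan is to reduce everything to the finite-dimensional correspondence of Lemma~\ref{H/I=N/V} by applying it at each quotient $H/I$ for $I \in \Lambda_H$ an open proartinian Hopf ideal, and then to take a projective limit. The key input is Proposition~\ref{prop:GPinf}\eqref{prop:GPinf:g}, which gives, for each such $I$, an isomorphism $\tilde L \otimes H/I \cong \tilde L[N/V_I]$ of finite-dimensional Hopf $\tilde L$-algebras, where $V_I \lhd N$ is an open normal subgroup; moreover the $G$-actions are compatible since both come from the $G$-action on the first tensorand of $\tilde L \hot H$ (Proposition~\ref{prop:GPinf}\eqref{prop:GPinf:d}).

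For the open case, I would take an open left ideal two-sided proartinian coideal $C \subseteq H$. Since $C$ is open, it contains some $I \in \Lambda_H$, so $(C/I) \subseteq H/I$ is a left ideal two-sided coideal in a finite-dimensional cocommutative Hopf algebra, and Lemma~\ref{H/I=N/V} applied to the $H/I$-Galois extension $L^I/K$ (which is Hopf-Galois by Corollary~\ref{cor:LIsub}) produces a $G$-equivariant subgroup $\bar V \subseteq N/V_I$ together with an isomorphism $\tilde L \otimes H/C \cong \tilde L[(N/V_I)/\bar V]$. Pulling $\bar V$ back along $N \twoheadrightarrow N/V_I$ yields an open $G$-equivariant subgroup $V \subseteq N$ with $V \supseteq V_I$ and $N/V \cong (N/V_I)/\bar V$. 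Conversely, an open $G$-equivariant subgroup $V \subseteq N$ must contain some $V_I$ (the $V_I$ form a base of open neighbourhoods of $1$ in the profinite group $N$), so the finite correspondence applied to $V/V_I \subseteq N/V_I$ produces a unique $C$. Bijectivity in the open case is thus immediate from Lemma~\ref{H/I=N/V}.

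For the closed case, I would use that any closed subspace $C \subseteq H$ satisfies $C = \bigcap_{I \in \Lambda_H}(C + I)$ by Proposition~\ref{prop:ProVect}\eqref{prop:ProVect:a2}. Each $C + I$ is an open left ideal two-sided proartinian coideal, hence, by the open case, corresponds to an open $G$-equivariant subgroup $V_{C,I} \subseteq N$. I would set $V = \bigcap_{I \in \Lambda_H} V_{C,I}$, which is a closed $G$-equivariant subgroup of $N$, and obtain the desired isomorphism by taking the projective limit
\[ \tilde L \hot H/C \;\cong\; \plim{I \in \Lambda_H} \tilde L \otimes H/(C + I) \;\cong\; \plim{I \in \Lambda_H} \tilde L[N/V_{C,I}] \;\cong\; \tilde L\llbracket N/V \rrbracket. \]
Conversely, any closed $G$-equivariant subgroup of the profinite group $N$ is the intersection of the open $G$-equivariant subgroups containing it, and applying the open correspondence at each level produces a coherent family of open left ideals two-sided proartinian coideals whose intersection gives the required closed~$C$. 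The statement on Hopf ideals versus normal subgroups is inherited level by level from Lemma~\ref{H/I=N/V}: $C$ is a proartinian Hopf ideal iff $(C+I)/I$ is a Hopf ideal in $H/I$ for every $I \in \Lambda_H$ iff $V_{C,I}/V_I \lhd N/V_I$ for every $I$ iff $V \lhd N$.

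The main obstacle I anticipate is verifying the naturality of the finite correspondences with respect to the surjections $H/(C+I') \twoheadrightarrow H/(C+I)$ for $I' \subseteq I$, i.e.\ that the $G$-equivariant subgroup associated to $(C+I)/I$ in $N/V_I$ is the image of the one associated to $(C+I')/I'$ in $N/V_{I'}$ under $N/V_{I'} \twoheadrightarrow N/V_I$. This follows from the explicit description of the bijection in Lemma~\ref{H/I=N/V}, which is built from the Hopf-subalgebra/coideal correspondence of Theorem~\ref{theo:subalgebra-ideal} (natural under Hopf surjections) together with the descent equivalence of Proposition~\ref{pr:descent} (functorial in the base change). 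Once this compatibility is in hand, the limits of the finite-level isomorphisms agree on both sides and the whole proposition assembles as described.
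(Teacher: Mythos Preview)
Your proposal is correct and follows essentially the same approach as the paper: reduce to the finite-dimensional correspondence of Lemma~\ref{H/I=N/V} at each level $H/I$ for $I \in \Lambda_H$, and assemble the result by taking projective limits. The only notable difference is the order of presentation: you establish the open case first and then obtain the closed case as an intersection of opens, whereas the paper treats the closed case directly (working with the images $\pi_I(C) = (C+I)/I$ in each $H/I$) and deduces the open case at the end from the characterisation of openness by finite codimension. Your explicit identification of the compatibility issue under the transition maps $H/(C+I') \twoheadrightarrow H/(C+I)$ is a point the paper passes over rather quickly, so your treatment is in fact slightly more careful there.
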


\begin{proof}
Let first $C \subseteq H$ be a closed proartinian left ideal two-sided coideal.
For any $I \in \Lambda_H$, consider the left ideal two-sided coideal $C_I = \pi_I(C) \subset H/I$. By the finite case, Lemma~\ref{H/I=N/V}, there is a $G$-equivariant subgroup $V_I \subset N_I$, where, as before, $N_I$ is such that $\tilde L \ot H/I \cong \tilde L[N_I]$. As the $N_I$ and their quotients $N_I/V_I$ form projective systems, so do the $V_I$, and letting $V = \plim{I \in \Lambda_H} V_I$ we obtain a closed $G$-equivariant subgroup of~$N$ such that $\tilde L \hot H/C \cong \tilde L \llbracket N/V \rrbracket$.

Let now $V \subseteq N$ be a closed $G$-equivariant subgroup. As $N = \plim{I \in \Lambda_H} N_I$, we can put $V_I = V \cap N_I$, identifying $N/V$ with the projective limit of the $N_I/V_I$. For each $I \in \Lambda_I$, $V_I$ is then a $G$-equivariant subgroup of $N_I$ and by Lemma~\ref{H/I=N/V}, we obtain a left ideal two-sided coideal $\overline{C}_I \subset H/I$ such that $\tilde L \ot H/\overline{C}_I \cong \tilde L [N_I/V_I]$. Writing $C_I = \pi_I^{-1}(\overline{C}_I)$, which is an open and closed proartinian left ideal two-sided coideal of~$H$, we can make the closed proartinian left ideal two-sided coideal $C = \bigcap_{I \in \Lambda_H} C_I$ of~$H$. Via the projective limit, we again find $\tilde L \hot H/C \cong \tilde L \llbracket N/V \rrbracket$.

Since these two constructions pass along the finite case, where they are mutual inverses, they give the claimed bijective correspondence.
The last statement follows as open-ness (under the assumption of closed-ness) is characterised by finite dimensionality.
\end{proof}

The proof of Corollary~\ref{co:correspGPinf} follows directly from Theorem~\ref{theo:inf-main} and this proposition.

\subsection{Example}

Here we give an example of an infinite Hopf-Galois extension, which is not classically Galois.

For any $n\in\NN$, we consider the field $L_n := \QQ(\sqrt[3^n]{2})$, which is a non-normal, separable Galois extension of $\QQ$. Applying \cite[Corollary 2.6]{GreitherPareigis87}, we find that $L_n/\QQ$ is Hopf-Galois for some Hopf algebra, and in fact it is even ``almost classically Galois'' (see \cite[Definition 4.2]{GreitherPareigis87}). Clearly, $L_n\subset L_{n+1}$ for all $n\in \NN$ and we define $L=\QQ(\sqrt[3^\infty]{2})=\bigcup_{n\in \NN} L_n$. Since moreover by \cite[Theorem 5.2]{GreitherPareigis87}, an almost classical Galois extension has a Hopf-Galois structure such that the strong structure theorem holds, we find that for a fixed $n\in \NN$, there exists a Hopf algebra $H_n$ such that $L_n$ is $H_n$-Galois and any $L_m$ with $m\le n$ is $H_m$-Galois for some Hopf algebra $H_m= H_n/I_{n,m}$, where $I_{n,m}$ is a Hopf ideal in $H_n$. Invoking the axiom of choice, we obtain for each $n\in \NN$ a Hopf algebra $H_n$ turning $L_n/\QQ$ into an $H_n$-Galois extension, such that for each $n,m\in \NN$ with $m\le n$, there is a surjective Hopf algebra morphism $\pi_{m,n}:H_n\to H_m$. In this way, we obtain a projective system of finite dimensional Hopf algebras, and we can define $H=\plim{n} H_n$ the associated proartinian Hopf algebra, and $L/\QQ$ is $H$-Galois in our sense.

\subsection{Formulation in terms of coactions}

We already remarked that if $H$ is a profinite Hopf algebra, then $H^\dagger = \ilim{U \in \Lambda_H} (H/U)^*$ is a usual Hopf algebra. The aim of this section is to show that an infinite Hopf-Galois extension in our sense is exactly a Hopf-Galois extension of $H^\dagger$ with respect to a co-action (instead of a continuous action).

\begin{proposition}\label{prop:coaction}
Let $H$ be a proartinian Hopf algebra, and $H^\dagger = \ilim{U \in \Lambda_H} (H/U)^*$. For any $K$-vector space (respectively $K$-algebra) $L$, there is a bijective correspondence between
\begin{enumerate}[(a)]
\item\label{prop:coaction:a} continuous $K$-linear maps $H\ot L\to L$ turning $L$ into a discrete $H$-module (algebra);
\item\label{prop:coaction:b} $K$-linear maps $\rho:L\to L\ot H^\dagger, \rho(x)=x_{[0]}\ot x_{[1]}$ turning $L$ into an $H^\dagger$-comodule (algebra).
\end{enumerate}
Moreover, under this correspondence, $L$ is $H$-Galois (in the sense of Definition~\ref{definfiniteHG}) if and only if $L$ is $H^\dagger$-Galois in the sense of Kreimer-Takeuchi, that is, the canonical map
$$\can^\dagger: L\ot_K L\to L\ot_K H^\dagger,\ \can^\dagger(x\ot y)=xy_{[0]}\ot y_{[1]}.$$
is a $K$-linear isomorphism.
\end{proposition}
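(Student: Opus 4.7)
The plan is to reduce both assertions to standard finite-dimensional duality, by exploiting the decompositions $L = \bigcup_{I \in \Lambda_H} L^I$ from Lemma~\ref{lem:LI} and $H^\dagger = \ilim{I \in \Lambda_H} (H/I)^*$ from the definition of $H^\dagger$. The natural pairing $H \times H^\dagger \to K$, defined for $h \in H$ and $f \in (H/I)^* \subseteq H^\dagger$ by $\langle h, f \rangle = f(\pi_I(h))$, will play a central role.

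To produce the bijection $\eqref{prop:coaction:a} \leftrightarrow \eqref{prop:coaction:b}$, I would start from a continuous $H$-action on a discrete $L$: for each $I \in \Lambda_H$, the restriction of the action to $L^I$ factors through the finite-dimensional Hopf algebra $H/I$, and standard finite-dimensional duality (as in formula~\eqref{coaction}) yields an $(H/I)^*$-comodule (algebra) structure $\rho_I : L^I \to L^I \otimes (H/I)^*$. Compatibility of the $\rho_I$ under the inclusions $L^I \hookrightarrow L^J$ and $(H/I)^* \hookrightarrow (H/J)^*$ for $J \subseteq I$ allows them to glue into a single coaction $\rho : L \to L \otimes H^\dagger$, with the comodule (algebra) axioms inherited from the finite level. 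Conversely, given $\rho$, one sets $h \cdot x := \langle h, x_{[1]} \rangle x_{[0]}$; since every $\rho(x)$ lies in $L \otimes (H/I)^*$ for some $I$, the element $x$ lies in $L^I$ and the action at $x$ factors through $H/I$, which gives both continuity of the action and discreteness of $L$. These two constructions are mutually inverse, and compatibility with the algebra structure passes through by the same finite-dimensional duality at each level $I$.

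For the Galois equivalence, I would combine the decompositions
\[L \otimes_K L = \ilim{I \in \Lambda_H} (L \otimes L^I), \qquad L \otimes_K H^\dagger = \ilim{I \in \Lambda_H} (L \otimes (H/I)^*),\]
which express $\can^\dagger$ as the direct limit of its finite-level restrictions $\can^\dagger_I : L \otimes L^I \to L \otimes (H/I)^*$, together with the dual decompositions
\[L \hot_K H = \plim{I \in \Lambda_H} (L \otimes H/I), \qquad \End_K(L) = \plim{I \in \Lambda_H} \Hom_K(L^I, L),\]
which express $\can$ via the compatible family $\can_I : L \otimes H/I \to \Hom_K(L^I, L)$ from diagram~\eqref{defcanproartinian}. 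At each $I$, the classical Kreimer--Takeuchi duality between $\can_I$ and $\can^\dagger_I$ (obtained by $L$-linear dualisation, as recalled around~\eqref{can*}) combined with faithful flatness of $L^I \hookrightarrow L$ turns ``$\can_I$ bijective'' into ``$\can^\dagger_I$ bijective''. Passing to limits then yields the desired equivalence between $\can$ being a homeomorphism and $\can^\dagger$ being a bijection.

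The main obstacle is the implication ``$\can$ homeomorphism $\Rightarrow$ each $\can_I$ is bijective'', since Example~\ref{ex:non-iso} warns that a homeomorphism of proartinian vector spaces need not descend to isomorphisms on a preassigned basis of finite quotients. Surjectivity of each $\can_I$ is automatic from diagram~\eqref{defcanproartinian} as in Lemma~\ref{propscan}\eqref{propscan:a}, so the task is injectivity. This can be extracted by identifying $\ker \can_I$ with the image under $\mathrm{id}_L \otimes \pi_I$ of those elements of $L \hot H$ whose image in $\End_K(L)$ vanishes on $L^I$, and then using the bijectivity of $\can$ jointly with faithful flatness of $L^I \hookrightarrow L$; this reduces the question precisely to the finite-dimensional Galois statement at level $I$, which is settled by the classical duality.
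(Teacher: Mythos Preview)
Your treatment of the bijection \eqref{prop:coaction:a}$\leftrightarrow$\eqref{prop:coaction:b} matches the paper's: pass to finite level via $L=\bigcup_I L^I$, dualize, and glue. The converse Galois implication ($\can^\dagger$ bijective $\Rightarrow$ $\can$ homeomorphism) is also essentially the paper's argument, via showing each $\can_I$ is bijective.

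The gap is in the forward Galois implication, precisely at the point you flag as the ``main obstacle''. Your proposed resolution is circular. You want injectivity of $\can_I:L\ot H/I\to \Hom_K(L^I,L)$. Writing $L\ot H/I\cong L\ot_{L^I}(L^I\ot H/I)$ and $\Hom_K(L^I,L)\cong L\ot_{L^I}\End_K(L^I)$, faithful flatness of $L^I\hookrightarrow L$ reduces injectivity of $\can_I$ to injectivity of $\can'_I:L^I\ot H/I\to \End_K(L^I)$. But this \emph{is} the ``finite-dimensional Galois statement at level~$I$'', namely that $L^I/K$ is $H/I$-Galois; it is not something ``settled by classical duality'', since duality only tells you $\can'_I$ is bijective iff its dual $(\can'_I)^*$ is, and you know neither. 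The bijectivity of the global $\can$ does not help: the issue is exactly that a homeomorphism of proartinian spaces need not induce isomorphisms on preassigned finite quotients, and your argument does not circumvent this.

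The paper's proof closes this gap by invoking Corollary~\ref{cor:LIsub}, which in turn rests on the infinite Greither--Pareigis machinery (Proposition~\ref{prop:GPinf} and Lemma~\ref{lem:gp-act}) and hence on the \emph{separability} of $L/K$. In other words, the injectivity of $\can_I$ is genuinely nontrivial and is obtained earlier in the paper as a consequence of the whole separable theory, not by a direct limit argument. Your proposal would need to import that result (or an independent proof that each $L^I$ is $H/I$-Galois) at this step.
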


\begin{proof}
From \eqref{prop:coaction:a} to \eqref{prop:coaction:b}.
Suppose first that $L$ has a continuous $H$-action. The we know by Lemma~\ref{lem:LI}, that $L=\bigcup_{I\in \Lambda_H} L^{I}$. In other words, for $I\subset J \in \Lambda_H$, we have a canonical inclusion $L^J\subset L^I$ and $L$ is the inductive limit of this direct system $L\cong \ilim{I\in \Lambda_H} L^I$.
By construction, the action of $H$ on $L$ induces an action $H/I\ot L^I\to L^I$ for any $I\in \Lambda_H$. Since $H/I$ is finite dimensional, we can take its dual coaction as in \eqref{coaction}, $\rho_I : L^I \to L^I\ot (H/I)^*$. Taking the inductive limit of these coactions we obtain (using the fact that the tensor product preserves inductive limits) a global coaction $\rho:L\to L \ot H^\dagger$.

From \eqref{prop:coaction:b} to \eqref{prop:coaction:a}.
Conversely, suppose that $\rho:L\to L\ot H^\dagger$ exists. Take any $x\in L$ and write $\rho(x)=\sum_{i=1}^n x_i\ot h^\dagger_i$. Since we have a finite number of elements $h^\dagger_i$ in the inductive limit $H^\dagger$, there exists some $I\in \Lambda_H$ such that $h^\dagger_i\in (H/I)^*$ for all $i=1,\ldots,n$. We then define for any $h\in H$, $h\cdot x:= \sum_i h_i^\dagger (\pi_I(h)) x_i $. One easily checks that this definition is independent of the choice of $I$, and the coassociativity and counitality of $H$ imply that this indeed defines an action of $H$ on $L$. By construction, we have that $x\in L^I$ if $\rho(x)\in L\ot (H/I)^*$. Since (as already observed above) for any $x\in L$ there exists some $I$ such that $\rho(x)\in L\ot (H/I)^*$, so $x\in L^I$, we find that $L=\bigcup_{I\in \Lambda_H} L^{I}$ and therefore the action is continuous with respect to the discrete topology on $L$ by Lemma~\ref{lem:LI}.

One checks that $\rho$ is an algebra map if and only if $L$ is an $H$-module algebra.

Now suppose that $L$ is $H$-Galois. Then by Corollary~\ref{cor:LIsub}, $\can_I: L \ot H/I \to \Hom_K(L^I,L)$ is bijective, which implies that $L^I$ is finite dimensional since $H/I$ is so. Taking the $L$-linear dual of the canonical map, we obtain the canonical map
$\can_I^*: L\ot L^I\to L\ot (H/I)^*$. By passing to the direct limit over $\Lambda_H$, we then find that $\can^\dagger:L\ot L \to L\ot H^\dagger$ is bijective.

Conversely, suppose that $\can^\dagger$ is bijective and fix any $I\in \Lambda_H$. Observe that for any $x,y\in L$, by the left $L$-linearity, $\can^\dagger(x\ot y)\in L\ot (H/I)^*$ implies that $\can^\dagger(1\ot y)\in L\ot(H/I)^*$, so  $\rho(y)\in L\ot (H/I)^*$. The latter implies that $y\in L^I$ as we already showed earlier in this proof. Therefore, the restriction of $(\can^\dagger)^{-1}$ to $L\ot (H/I)^\dagger$ lands in $L\ot L^I$, which means that it gives an inverse for $\can_I$, which is then also bijective. Since all $\can_I$ are bijective, $\can$ is bijective as well, by passing to the projective limit over $I\in \Lambda_H$.
\end{proof}

\end{document}